\newsavebox\myboxA
\newsavebox\myboxB
\newlength\mylenA
\newcommand*\lbar[2][0.75]{%
    \sbox{\myboxA}{$\m@th#2$}%
    \setbox\myboxB\null
    \ht\myboxB=\ht\myboxA%
    \dp\myboxB=\dp\myboxA%
    \wd\myboxB=#1\wd\myboxA
    \sbox\myboxB{$\m@th\overline{\copy\myboxB}$}
    \setlength\mylenA{\the\wd\myboxA}
    \addtolength\mylenA{-\the\wd\myboxB}%
    \ifdim\wd\myboxB<\wd\myboxA%
       \rlap{\hskip 0.5\mylenA\usebox\myboxB}{\usebox\myboxA}%
    \else
        \hskip -0.5\mylenA\rlap{\usebox\myboxA}{\hskip 0.5\mylenA\usebox\myboxB}%
    \fi}
\newtheorem{prop}{Proposition}
\newtheorem{theorem}{Theorem}
\newtheorem{lemma}{Lemma}
\newtheorem{assump}{Assumption}
\newtheorem{coro}{Corollary}
\newcommand{\cA}{\mathcal{A}}
\newcommand{\cQ}{\mathcal{Q}}
\newcommand{\R}{\mathbb R}
\begin{document}
\title{On the Regret Analysis of Online LQR Control with Predictions}
\author{Runyu Zhang, Yingying Li, and Na Li 
\thanks{This work was supported by NSF CAREER 1553407, AFOSR YIP, ONR YIP. R. Zhang, Y. Li, and N. Li are with the School of Engineering and Applied Sciences, Harvard University, 33 Oxford Street, Cambridge, MA 02138, USA (email: runyuzhang@fas.harvard.edu, yingyingli@g.harvard.edu, nali@seas.harvard.edu).}
}

\maketitle
\begin{abstract}
    In this paper, we study the dynamic regret of online linear quadratic regulator (LQR) control with time-varying cost functions and disturbances. We consider the case where a finite look-ahead window of cost functions and disturbances is available at each stage. The online control algorithm studied in this paper falls into the category of model predictive control (MPC) with a particular choice of terminal costs to ensure the exponential stability of MPC. It is proved that the regret of such an online algorithm decays exponentially fast with the length of predictions. The impact of inaccurate prediction on disturbances is also investigated in this paper.
\end{abstract}
\section{Introduction} \label{sec:intro}

Consider a classical finite-horizon discrete-time linear quadratic regulator (LQR) problem:
\begin{equation}\label{equ: lqr}
    \begin{aligned}
    \min_{u_1, \dots, u_T}&\   \sum_{t=1}^{T-1} (x_t^\top Q_tx_t + u_t^\top R_tu_t) + x_T^\top Q_Tx_T\\
    \text{s.t. } & \ x_{t+1} = Ax_t + B_uu_t + B_dd_t, \quad t\geq 1,
\end{aligned}
\end{equation}
where the cost function parameters $Q_t, R_t$ and the system disturbances $d_t$ are time-varying, while the system parameters $A, B_u, B_d$ are time-invariant. It is well-known that the optimal control input to \eqref{equ: lqr} at time step $t$ requires the information of all the future, i.e. $\{Q_{\tau},R_{\tau}, d_{\tau}\}_{\tau\geq t}$ \cite{Bersikas_optimal_control_dynamic_programming}. However, in most real-world applications, e.g. autonomous driving \cite{kim2014mpc}, energy systems \cite{kouro2008model}, date center management \cite{lazic2018data}, it is  impractical for an decision maker to acquire all the (accurate) future information beforehand. Instead, the decision maker may only have access to some predictions for the near future and the predictions can be inaccurate.  Hence, this calls for the study of  \textit{online LQR problem 
with limited and inaccurate  predictions of the future}. 
Specifically, this paper considers the following online LQR problem: at each time step $t$, the decision maker receives cost predictions $\{Q_{i\mid t}, R_{i\mid t}\}_{i=t}^{t+W} $ and disturbance predictions $\{d_{i\mid t}\}_{i=t}^{t+W}$ for the next $W$ time steps. For simplicity, we only consider inaccurate disturbance predictions and assume cost predictions are accurate. The goal of online LQR is to minimize the total cost in \eqref{equ: lqr} by only leveraging the predictions and the history.

Among all the online control algorithms that leverage predictions, perhaps model predictive control (MPC) is the most popular one. Although MPC has been intensively studied both for linear systems and nonlinear systems \cite{diehl2010, ellis2014,amrit2011economic, grune2013economic, angeli2011average, grune2014, grune2020}, most studies focus on asymptotic performance such as stability or convergence to some optimal state. Motivated by the aforementioned applications, there is an increasing need to understand the non-asymptotic performances of MPC such as the cost difference compared to the optimal cost over a finite time-horizon. Though there are recent online control papers, e.g.,  \cite{abbasi2014tracking,cohen2018online,agarwal2019online}, that study the non-asymptotic behavior of online control algorithms, they do not consider predictions, that is, the controller has to take an control action without any knowledge of future  $\{Q_{\tau},R_{\tau}, d_{\tau}\}_{\tau\geq t}$ at time $t$.

In online learning community, on the contrary, there are many papers on the non-asymptotic performance analysis, where the performance is usually measured by regret, e.g., static regrets \cite{hazan2019introduction,shalev2011online}, dynamic regrets \cite{jadbabaie2015online}, etc. But most papers either do not system dynamics or predictions \cite{rakhlin2013online, chen2015online, badiei2015online, chen2016using} or only consider special simple dynamics \cite{li2018online, goel2019online} or simplified prediction models where $W$-step ahead predictions are accurate without errors \cite{li2018online,li2019}. 

 The setting considered in this paper is closest to the recent papers \cite{li2019,li2020, yu2020, goel2020power}. \cite{li2019,li2020} consider a linear dynamical system with time-varying cost functions but no disturbances. They focus on gradient-based online control rather than the more commonly used MPC approach. On the other hand, \cite{goel2020power,yu2020}, study the non-asymptotic behavior of MPC algorithm, but they only consider time invariant cost function and accurate disturbance predictions. But one interesting message from \cite{yu2020} is that MPC turns out to be optimal (or nearly optimal) in stochastic settings (or in adversarial settings) with respect to (dynamic) regrets. This further motivates us to look into the regret analysis of MPC for the setting with time-varying cost functions and inaccurate disturbance predictions.

\textbf{Contribution.} In this paper, we provide an explicit upper bound for the performance of MPC (Theorem~\ref{thm:regret-analysis-main}) in terms of the dynamic regret: the online cost minus the optimal cost in hindsight. The MPC studied in this paper follows the standard MPC framework with a particular choice of terminal cost which ensures the exponential stability of MPC. Our regret bound consists  of two parts: the first part decays exponentially with the prediction window $W$ and the second part increases with the  prediction errors. The first part indicates the benefits of having more predictions and the second part reflects the negative impact of inaccurate predictions. When there are no prediction errors, the second part is zero and  the regret bound decays exponentially with $W$. Further, in the second part of our regret bound, the errors of long-term predictions play an exponentially diminishing effect, i.e. the impact of the error of $d_{t+k\mid t}$ decays exponentially with $k$ on the regret bound. This indicates that MPC (implicitly) focuses more on the short-term predictions and less on long-term ones, which is desirable in most cases  since long-term predictions usually suffer from poor quality. Further, this suggests that our regret upper bound provides useful guidelines in choosing the prediction window $W$ used in MPC in the face of inaccurate predictions. 


To develop our regret bound, we provide a general regret formula for any online algorithms for online LQR problems, which is a quadratic function of the differences between the online control actions and the optimal control actions. Our formula is established by leveraging a cost difference lemma  in the (Markov decision processes) literature \cite{schulman2015,fazel2018} and the special properties of LQR. Our formula greatly relieves the difficulty of non-asymptotic regret analysis of MPC.  Furthermore, the formula can be applied to other online control algorithms  and thus can be viewed as a contribution on its own merit.

\textbf{Notations:} The norm $\|\cdot\|$ refers to the $L_2$ norm for both vectors and matrices. $\lambda_{\min}(A)$ denotes the minimum eigenvalue of matrix $A$, and $\lambda_{\max}(A)$ denotes the maximum eigenvalue of $A$. For any symmetric matrices $A$ and $B$, we write $A \prec B$ if $B-A$ is positive definite.  

\section{Problem Setup and Preliminaries}
\subsection{Problem Formulation: Online LQR}
As stated in Section~\ref{sec:intro}, we consider an online linear quadratic regulator (LQR) problem with process noises/disturbances. The system dynamics is provided by:
\begin{equation}\label{equ: linear dynamics}
    x_{t+1} = Ax_t + B_uu_t + B_dd_t, \quad t\geq 1
\end{equation}
where the initial state $x_1 \in \R^n$ is fixed, $x_t \in \R^n$ and $ u_t \in \mathbb{R}^{n_u}$ denote the state and control input at stage $t$ respectively, and  $d_t \in \mathbb{R}^{n_d}$ denotes the process noise/disturbance. We consider time-varying quadratic costs at each stage $t$, i.e. $x_t^\top Q_t x_t + u_t^\top R_t u_t$. 
The total cost over $T$ stages is defined as
\begin{equation*}
    J(\mathbf{x}, \mathbf{u}) = \sum_{t=1}^{T-1} (x_t^\top Q_tx_t + u_t^\top R_tu_t) + x_T^\top Q_Tx_T, \label{eq:JLQR}
\end{equation*}
where $\mathbf{x} = [x_1^\top, \dots, x_T^\top]^\top, \mathbf{u} = [u_1^\top, \dots, u_{T-1}^\top]^\top$,   $Q_T$ represents the terminal cost, and we define $R_T=0$ for notational simplicity.




The control objective is to design control input $u_t$ at each $t$ to minimize the total cost $J(\mathbf{x},\mathbf{u})$. However, the optimal control at time $t$ requires the information of all the future cost functions and disturbances, i.e. $\left\{d_i,Q_i, R_i\right\}_{i=t}^T$ (see e.g. Proposition~\ref{prop:optimal-policy}), which may  not be practical in real-world applications.  Nevertheless, some predictions are usually available beforehand, especially for the near future.  In this paper, we consider that the predictions of  the next $W$ stages are available, i.e. $\{d_{i|t}, Q_{i|t}, R_{i|t}\}_{i=t}^{t+W}$, where $d_{i\mid t}$ denotes the prediction of the disturbance $d_i$ at stage $t$, and the same applies to $ Q_{i|t}, R_{i|t}$. The prediction information can be inaccurate and it is worth discussing the impact of the prediction errors. For this paper, we focus on the  predictions errors of the disturbances and denote  $e_{i|t} := d_{i|t} - d_t$ as the prediction error of $d_{i\mid t}$. Since the prediction $d_{i\mid t}$ is received at stage $t$, which is $(i-t)$ stages  before stage $i$, we also call $e_{i|t}$ as the $(i-t)$-step prediction error of $d_i$. For simplicity, we 
omit the prediction errors of the cost matrices  by considering accurate $W$-stage cost predictions: $Q_{i\mid t}=Q_i$ and $R_{i\mid t}=R_i$ for $t\leq i \leq t+W$.\footnote{Ideally, we would like to also consider inaccurate prediction on $Q_i, R_i$, but due to some technical difficulty in analyzing the performance, it is left as our future work. Nevertheless, our  setting still finds applications, e.g., when the cost function is set according to financial contracts or cost planning steps-ahead. However, disturbances are often due to volatile nature such as wind. Thus allowing inaccurate predictions greatly broadens the applications of previous settings studied in \cite{li2019,yu2020,goel2019online}.}

In summary, the online LQR considered in this paper is described as follows: Assuming $A, B_u, B_d$ is known apriori, at each step $t = 1, 2, \dots,$
\begin{itemize}
    \item the controller observes state $x_t$ and receives predictions $\{d_{i|t}, Q_i, R_i\}_{i=t}^{t+W}$;
    \item  the controller  implements  $u_t$ based on the predictions $\{d_{i|t}, Q_i, R_i\}_{i=t}^{t+W}$ and the
    history  $\{x_i, d_i, Q_i, R_i\}_{i=1}^t$ and suffers the cost $x_t^\top Q_t x_t + u_t^\top R_t u_t$;
    \item the system evolves to the next state $x_{t+1}$ by  \eqref{equ: linear dynamics} under the real disturbance $d_t$.
\end{itemize}

Our goal is to design an online algorithm to reduce the total cost by  exploiting the online available information, i.e. the predictions and the history. For example, consider an online control algorithm denoted by $\bm \pi=\{ \pi_1, \dots, \pi_{T-1}\}$, where $\pi_t$ represents the policy at step $t$. Notice that $\pi_t$ only has access to  the online available information at $t$, i.e., the control action  $u_t$ at time $t$ is determined by
\begin{equation} \label{eq:online_pi}
    u_t = \pi_t(\underbrace{\{x_\tau\}_{\tau=1}^t, \{d_\tau, Q_\tau,R_\tau\}_{\tau=1}^{t}}_{\text{history}}, \underbrace{\{d_{i|t}, Q_i, R_i\}_{i=t}^{t+W}}_{\text{predictions}}), 
\end{equation}
We measure the  performance of the online algorithm $\bm \pi$ by \textit{dynamic regret}, which compares the total cost of $\bm \pi$  with the optimal total cost $J^*$ in hindsight, that is, 
\begin{equation*}
   \text{Regret}(\bm \pi)= J(\mathbf{x}^{\bm \pi},\mathbf{u}^{\bm \pi}) - J^*.
\end{equation*}
where $x_t^{\bm \pi}, u_t^{\bm \pi}$ denote the state and action at step $t$ generated by  the online algorithm $\bm \pi$. Let $\bm \pi^*=\{\pi_1^*, \dots, \pi_{T-1}^*\}$ denote the optimal controller in hindsight that yields the optimal cost $J^*$. Here $\pi_t^*$ is the optimal policy at each step $t$, which will be further discussed in Section~\ref{sec:offlineLQR}. 
 
 Dynamic regret is a commonly used performance metric in the literature \cite{li2019,yu2020}. The benchmark of the dynamic regret defined above is optimal \textit{time-varying} policies. Notice that that another popular regret notion is the \textit{static regret}, whose benchmark is the optimal \textit{time-invariant} policy which is a weaker benchmark because the optimal control for finite-time horizon time-varying LQR (\ref{eq:JLQR}) is time-varying. 

Throughout the paper, we consider the following assumptions on the dynamics and  the cost matrices, which are standard assumptions in the literature.

\begin{assump}\label{assump:stabilizability}
The pair $(A,B_u)$ is stabilizable. All pairs $(A, Q_t)$ are detectable. 
\end{assump}
\begin{assump}\label{assump:bounded-S-R}
There exist positive definite matrices $Q_{\min},$ $ Q_{\max}, R_{\min}, R_{\max}$ such that $Q_t$  for any $1\leq t \leq T$ and $R_t $ for any $1\leq t \leq T-1$ satisfy
$$0\prec Q_{\min}\preceq Q_t \preceq Q_{\max}, \quad0\prec R_{\min} \preceq R_t \preceq R_{\max}.$$
\end{assump}
\subsection{Preliminaries: Optimal Offline Controller} \label{sec:offlineLQR}
Here we provide some preliminaries on the optimal offline LQR \cite{Bersikas_optimal_control_dynamic_programming,anderson2007}  which will be used in analyzing the regret of our online controller.  Throughout the paper we will use the notation $F_{Q,R}(\cdot)$
Ricatti iteration for standard LQR given the system dynamics $(A,B_u)$:
$$F_{Q,R}(P) := Q + A^\top PA - A^\top P B_u(R+B_u^\top PB_u)^{-1}B_u^\top PA$$

We use $P_t$ to denote the optimal cost-to-go matrix for the standard LQR problem. $P_t$ is calculated through:
\begin{equation} \label{eq:iterative-Riccati}
\left\{
\begin{array}{l}
     P_t = F_{Q_t, R_t}(P_{t+1}),\quad  1\le t \le T-1\\
     P_T = Q_T
\end{array}
\right.
\end{equation}
The optimal-cost-to-go is given by, for any $x$,
\begin{align*}
    &\quad x^\top P_t x =\\ &\min_{\{u_t\}_{t\!=1}^{T\!-1}} \left\{\sum_{i=t}^{T-1}(x_i^\top Q_ix_i + u_i^\top R_iu_i) + x_T^\top Q_Tx_T \mid x_t = x\right\}
\end{align*}

The optimal control gain of the LQR problem is:
\begin{equation}\label{eq:K_t-LQR}
\begin{split}
    K_t=(R_t+B_u^\top P_{t+1}B_u)^{-1}B_u^\top P_{t+1}A,
\end{split}
\end{equation}

The definition of $P_t, K_t$ depends on the matrices series $\{Q_i, R_i\}_{i=t}^{T}$. Throughout the paper we might use different series of matrices to compute its corresponding $P_t, K_t$'s, thus we rewrite the variables $P_t, K_t$ as functions, i.e.,
$$P_t(\{Q_i, R_i\}_{i=t}^{T-1}, Q_T), ~K_t(\{Q_i, R_i\}_{i=t}^{T-1}, Q_T)$$ 
to denote the optimal cost-to-go matrix and control gain for the standard LQR problem, given the sequence of stage cost matrices $\{Q_i, R_i\}_{i=t}^{T-1}$ and terminate cost $Q_T$. If not stated otherwise, we will use the short notation $P_t, K_t$ to denote the cost-to-go matrix and control gain for the offline setting.

We introduce the variable $P_{\max}$ to denote the solution of the following \textit{discrete time Riccati equation} (DARE):
\begin{equation}\label{eq:P_max}
\begin{split}
    P_{\max} = F_{Q_{\max},R_{\max}}(P_{\max})
\end{split}
\end{equation}

Furthermore, we define the state transition matrix as:
\begin{equation*}
    \Phi(t, t_0) := \left\{
    \begin{array}{ll}
        (A-B_uK_{t-1})\cdots(A-B_uK_{t_0}), & t>t_0 \\
        I, & t = t_0 
    \end{array}
    \right.
\end{equation*}
It is known that the optimal control action is a linear combination of current state $x_t$ and all future disturbances \cite{anderson2007,goel2020power}.
\begin{prop}{\cite{goel2020power}}\label{prop:optimal-policy}
The optimal policies $\{\pi_t^*\}_{t=1}^{T-1}$ that minimize $J(\mathbf{x},\mathbf{u})$ can be written as:
\begin{equation}\label{eq:optimal-policy}
\begin{split}
     \bm \pi^*:u_t \! =\pi_t^*(x_t,d_t,\ldots,d_{T-1})=\! -K_tx_t - \sum_{i=t}^{T-1} K_t^{d,i} B_dd_i,
\end{split}
\end{equation}
where for $i=t,\ldots,T-1$
\begin{equation}\label{eq:K_t^i}
    K_{t}^{d,i} = (R_t + B_u^\top P_{t+1}B_u)^{-1}B_u^\top\Phi(i+1, t+1)^\top P_{i+1}.
\end{equation}
\end{prop}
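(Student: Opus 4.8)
The plan is to establish the formula by backward induction using dynamic programming. For $1 \le t \le T$ let $V_t(x)$ denote the optimal cost-to-go, i.e. the minimum of $\sum_{i=t}^{T-1}(x_i^\top Q_i x_i + u_i^\top R_i u_i) + x_T^\top Q_T x_T$ over $\{u_i\}_{i=t}^{T-1}$ subject to $x_t = x$ and the dynamics \eqref{equ: linear dynamics} (with the future disturbances $d_t,\dots,d_{T-1}$ treated as fixed). The induction claim is that $V_t(x) = x^\top P_t x + 2 p_t^\top x + c_t$ for some vector $p_t \in \R^n$ and scalar $c_t$, where $P_t$ is given by the Riccati recursion \eqref{eq:iterative-Riccati} and $p_T = 0$; the base case $t = T$ is immediate since $V_T(x) = x^\top Q_T x = x^\top P_T x$.

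For the inductive step, I would substitute $V_{t+1}(x) = x^\top P_{t+1} x + 2 p_{t+1}^\top x + c_{t+1}$ into the Bellman equation
\[
V_t(x) = \min_{u}\ \big[\, x^\top Q_t x + u^\top R_t u + V_{t+1}(Ax + B_u u + B_d d_t)\,\big],
\]
expand, and collect the terms quadratic and linear in $u$. Solving this unconstrained quadratic minimization gives the minimizer
\[
u_t^* = -K_t x - (R_t + B_u^\top P_{t+1} B_u)^{-1} B_u^\top \big( P_{t+1} B_d d_t + p_{t+1}\big),
\]
with $K_t$ as in \eqref{eq:K_t-LQR}. Plugging $u_t^*$ back in and matching the quadratic-in-$x$ terms reproduces the Riccati update $P_t = F_{Q_t,R_t}(P_{t+1})$ (the standard LQR calculation), while matching the linear-in-$x$ terms gives a recursion for $p_t$. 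The one algebraic simplification that must be checked is that the coefficient $(A-B_uK_t)^\top P_{t+1}B_u - K_t^\top R_t$ multiplying the disturbance-feedforward term vanishes; this follows from $K_t^\top(R_t + B_u^\top P_{t+1}B_u) = A^\top P_{t+1} B_u$, which is merely a rearrangement of the definition of $K_t$. After this cancellation the recursion reduces to
\[
p_t = (A - B_u K_t)^\top \big(P_{t+1} B_d d_t + p_{t+1}\big), \qquad p_T = 0.
\]

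Unrolling this recursion from $t = T$ downwards, each step appends one factor $(A-B_uK_{\cdot})^\top$, and the resulting products telescope into the transpose of the closed-loop transition matrix, yielding $p_t = \sum_{i=t}^{T-1} \Phi(i+1,t)^\top P_{i+1} B_d d_i$. Substituting the corresponding expression for $p_{t+1}$ into the formula for $u_t^*$ and observing that the current-disturbance term $-(R_t+B_u^\top P_{t+1}B_u)^{-1}B_u^\top P_{t+1}B_d d_t$ equals the $i=t$ summand once we use $\Phi(t+1,t+1) = I$, the two pieces merge into the single sum $\sum_{i=t}^{T-1} K_t^{d,i} B_d d_i$ with $K_t^{d,i}$ exactly as in \eqref{eq:K_t^i}, which is \eqref{eq:optimal-policy}.

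I expect no conceptual obstacle here: the argument is the classical dynamic-programming derivation of LQR tracking control, and the only cancellation ($K_t^\top(R_t+B_u^\top P_{t+1}B_u)=A^\top P_{t+1}B_u$) is routine. The part most prone to error is the bookkeeping — keeping the index conventions of $\Phi(\cdot,\cdot)$ consistent while telescoping the $p_t$ recursion, and correctly absorbing the current-disturbance term into the general sum via $\Phi(t+1,t+1)=I$.
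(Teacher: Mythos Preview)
Your proof is correct, and it takes a genuinely different route from the paper. You use the classical tracking-LQR ansatz $V_t(x)=x^\top P_t x + 2p_t^\top x + c_t$, derive the recursion $p_t=(A-B_uK_t)^\top(P_{t+1}B_dd_t+p_{t+1})$ via the cancellation $K_t^\top(R_t+B_u^\top P_{t+1}B_u)=A^\top P_{t+1}B_u$, and telescope it into $p_t=\sum_{i=t}^{T-1}\Phi(i+1,t)^\top P_{i+1}B_dd_i$. This is the standard and most direct argument, and your bookkeeping (in particular absorbing the $i=t$ term via $\Phi(t+1,t+1)=I$) is correct.

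The paper instead introduces a block-matrix value-function representation: it stacks the ``free'' trajectory $y_t^{t:T}(x)\in\R^{(T-t+1)n}$ driven by the disturbances with zero control, writes $V_t^*(x)=y_t^{t:T}(x)^\top V_t\,y_t^{t:T}(x)$ for a large matrix $V_t\in\R^{(T-t+1)n\times(T-t+1)n}$, and then needs three auxiliary results (an explicit recursion for $V_t$, consistency of $\cA^{(T-t+1)\top}V_t\cA^{(T-t+1)}$ with the Riccati $P_t$, and a structural lemma for $Y_t:=V_t\cA^{(T-t+1)}$) before unwinding the optimal control back to \eqref{eq:optimal-policy}. Your argument bypasses all of this machinery. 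The trade-off is that the paper's block objects $V_t$, $Y_t$, and $G_t$ are reused verbatim in the proof of the general regret formula (Lemma~\ref{lemma:regret-formula}), so the extra setup pays off later; your more elementary derivation of Proposition~\ref{prop:optimal-policy} would not by itself furnish those ingredients.
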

 Furthermore, the exponential stability of LQR can also be established.
\begin{prop}{(Exponential Stability of the Optimal Controller \cite{anderson2007}, Sec. 3.2)}
\label{prop:stability-LQR}
The state transition matrix for finite time horizon optimal LQR control is exponentially stable, i.e. 
\begin{equation*}
    \|\Phi(t,t_0)\| \le \tau \rho^{t-t_0},
\end{equation*}
where
$$\tau = \sqrt{\frac{\lambda_{\max}(P_{\max})}{\lambda_{\min}(Q_{\min})}}, \quad \rho = \sqrt{1-\frac{\lambda_{\min}(Q_{\min})}{\lambda_{\max}(P_{\max})}}.$$
\end{prop}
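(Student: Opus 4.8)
\emph{Approach.} The plan is to run a time‑varying Lyapunov argument using the optimal cost‑to‑go matrices $\{P_t\}$ from \eqref{eq:iterative-Riccati} as Lyapunov functions along the closed‑loop trajectory $x_{t+1}=(A-B_uK_t)x_t$. First I would record the standard completing‑the‑square identity for LQR: writing $A_{cl,t}:=A-B_uK_t$, the Riccati recursion \eqref{eq:iterative-Riccati} together with \eqref{eq:K_t-LQR} gives
$$A_{cl,t}^\top P_{t+1}A_{cl,t}+Q_t+K_t^\top R_tK_t=P_t,$$
which follows by direct algebra, or more cleanly from the variational form $F_{Q_t,R_t}(P)=\min_K\bigl[Q_t+K^\top R_tK+(A-B_uK)^\top P(A-B_uK)\bigr]$ together with the fact that $K_t$ attains the minimum. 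In particular $A_{cl,t}^\top P_{t+1}A_{cl,t}\preceq P_t-Q_t\preceq P_t-Q_{\min}$.

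\emph{Uniform bounds on $P_t$.} Next I would establish $Q_{\min}\preceq P_t\preceq P_{\max}$ for every $t$. The lower bound $P_t\succeq Q_t\succeq Q_{\min}$ is immediate from the min representation above, since the remaining terms are positive semidefinite. For the upper bound I would use monotonicity of the Riccati map in all of its arguments: from the same min representation, $F_{Q,R}(P)$ is nondecreasing in $P$, in $Q$, and in $R$ with respect to the positive semidefinite order. Starting from $P_T=Q_T\preceq Q_{\max}\preceq P_{\max}$ and iterating \eqref{eq:iterative-Riccati} backward, $P_t=F_{Q_t,R_t}(P_{t+1})\preceq F_{Q_{\max},R_{\max}}(P_{\max})=P_{\max}$ by induction, using \eqref{eq:P_max}. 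As a byproduct $\lambda_{\min}(Q_{\min})\le\lambda_{\min}(P_{\max})\le\lambda_{\max}(P_{\max})$, so $\rho\in[0,1)$ is well defined.

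\emph{Contraction and conversion.} With $V_t:=x_t^\top P_tx_t$ along the trajectory, the first‑step identity yields $V_{t+1}=x_t^\top A_{cl,t}^\top P_{t+1}A_{cl,t}x_t\le V_t-x_t^\top Q_{\min}x_t$. Since $x_t^\top Q_{\min}x_t\ge\lambda_{\min}(Q_{\min})\|x_t\|^2\ge\frac{\lambda_{\min}(Q_{\min})}{\lambda_{\max}(P_{\max})}x_t^\top P_tx_t$ (the last step because $P_t\preceq P_{\max}$), we get $V_{t+1}\le\bigl(1-\frac{\lambda_{\min}(Q_{\min})}{\lambda_{\max}(P_{\max})}\bigr)V_t=\rho^2V_t$, hence $V_t\le\rho^{2(t-t_0)}V_{t_0}$. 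Converting back with $\lambda_{\min}(Q_{\min})\|x_t\|^2\le V_t$ and $V_{t_0}\le\lambda_{\max}(P_{\max})\|x_{t_0}\|^2$ gives $\|x_t\|^2\le\frac{\lambda_{\max}(P_{\max})}{\lambda_{\min}(Q_{\min})}\rho^{2(t-t_0)}\|x_{t_0}\|^2=\tau^2\rho^{2(t-t_0)}\|x_{t_0}\|^2$. Because $x_t=\Phi(t,t_0)x_{t_0}$ for an arbitrary initial condition, taking the supremum over unit vectors $x_{t_0}$ yields $\|\Phi(t,t_0)\|\le\tau\rho^{t-t_0}$.

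\emph{Main obstacle.} The only genuinely nontrivial ingredient is the uniform upper bound $P_t\preceq P_{\max}$, which rests on simultaneous monotonicity of the discrete Riccati map in $(P,Q,R)$; the cleanest route is the variational $\min$‑over‑$K$ characterization of $F_{Q,R}$ rather than manipulating the explicit matrix‑inverse form. Everything else is a routine discrete‑time Lyapunov estimate assembling these facts.
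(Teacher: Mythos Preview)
Your proposal is correct and follows essentially the same approach as the paper: use $V_t=x_t^\top P_tx_t$ as a time-varying Lyapunov function, obtain the one-step decrease $V_{t+1}\le V_t-x_t^\top Q_tx_t$ from the Riccati/Bellman identity, convert this to the geometric contraction $V_{t+1}\le\rho^2V_t$ via the sandwich $Q_{\min}\preceq P_t\preceq P_{\max}$, and read off the operator-norm bound on $\Phi(t,t_0)$. The only stylistic difference is that you derive $P_t\preceq P_{\max}$ by induction using monotonicity of the variational form $F_{Q,R}(P)=\min_K[\cdot]$, whereas the paper argues the same bound via the value-function interpretation (Proposition~\ref{prop:bounded-value-function}); these are equivalent.
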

This leads to the exponential decaying properties of $K_t^{d,i}$.
\begin{coro}\label{coro:bound k_t^i}
The matrices $K_t^{d,i}$ defined in \eqref{eq:K_t^i} satisfy
\begin{equation*}
    \|K_t^{d,i}\| \le \frac{\tau\|B_u\|\lambda_{\max}(P_{\max})}{\lambda_{\min}(R_{\min})}\rho^{i-t},\quad i\ge t.
\end{equation*}
\end{coro}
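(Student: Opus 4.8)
The plan is to bound $\|K_t^{d,i}\|$ factor-by-factor starting from its closed form \eqref{eq:K_t^i}. By submultiplicativity of the $L_2$ norm,
\[
\|K_t^{d,i}\| \le \big\|(R_t + B_u^\top P_{t+1}B_u)^{-1}\big\|\;\|B_u\|\;\|\Phi(i+1,t+1)\|\;\|P_{i+1}\|,
\]
so it suffices to bound each of the four factors separately. For the inverse factor, $P_{t+1}\succeq 0$ gives $B_u^\top P_{t+1}B_u\succeq 0$, hence $R_t + B_u^\top P_{t+1}B_u \succeq R_t \succeq R_{\min}$ by Assumption~\ref{assump:bounded-S-R}, so $\big\|(R_t + B_u^\top P_{t+1}B_u)^{-1}\big\| = 1/\lambda_{\min}(R_t + B_u^\top P_{t+1}B_u) \le 1/\lambda_{\min}(R_{\min})$. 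The transition-matrix factor is immediate from Proposition~\ref{prop:stability-LQR}, since $\|\Phi(i+1,t+1)^\top\| = \|\Phi(i+1,t+1)\| \le \tau\rho^{(i+1)-(t+1)} = \tau\rho^{i-t}$. The factor $\|B_u\|$ needs nothing.

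The remaining factor $\|P_{i+1}\|$ is where the only real work lies. Since $P_{i+1}$ is symmetric positive semidefinite, $\|P_{i+1}\| = \lambda_{\max}(P_{i+1})$, so it is enough to show $P_{i+1}\preceq P_{\max}$. I would prove this by backward induction on the Riccati recursion \eqref{eq:iterative-Riccati}, using the variational form $x^\top F_{Q,R}(P)x = \min_u\{x^\top Q x + u^\top R u + (Ax+B_uu)^\top P(Ax+B_uu)\}$, which shows that $F_{Q,R}(P)$ is monotone nondecreasing in each of $P\succeq 0$, $Q$, and $R$. For the base case, Assumption~\ref{assump:bounded-S-R} gives $P_T = Q_T \preceq Q_{\max}$, and $Q_{\max}\preceq P_{\max}$ follows from the DARE \eqref{eq:P_max} by writing $P_{\max}-Q_{\max} = A^\top P_{\max}^{1/2}\big(I + P_{\max}^{1/2}B_uR_{\max}^{-1}B_u^\top P_{\max}^{1/2}\big)^{-1}P_{\max}^{1/2}A \succeq 0$ via the matrix inversion lemma. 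For the inductive step, if $P_{t+1}\preceq P_{\max}$ then $P_t = F_{Q_t,R_t}(P_{t+1}) \preceq F_{Q_t,R_t}(P_{\max}) \preceq F_{Q_{\max},R_{\max}}(P_{\max}) = P_{\max}$, using monotonicity in $P$, then in $(Q,R)$, and finally the DARE. Hence $\|P_{i+1}\| \le \lambda_{\max}(P_{\max})$.

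Multiplying the four bounds gives $\|K_t^{d,i}\| \le \frac{1}{\lambda_{\min}(R_{\min})}\cdot\|B_u\|\cdot\tau\rho^{i-t}\cdot\lambda_{\max}(P_{\max})$, which is exactly the claimed inequality. The main (essentially only) obstacle is the comparison $P_{i+1}\preceq P_{\max}$; the other three factors are routine norm manipulations. Since the monotonicity of the Riccati map is classical, I would either cite it or include the short induction above, and no delicate estimates are needed beyond that.
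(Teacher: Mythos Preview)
Your argument is correct and matches the paper's (implicit) approach: the paper states this as an immediate corollary of Proposition~\ref{prop:stability-LQR} together with the bound $P_{i+1}\preceq P_{\max}$ (which it records separately as Proposition~\ref{prop:bounded-value-function}), and then combines the factors exactly as you do via submultiplicativity. The only minor stylistic difference is that the paper obtains $P_t\preceq P_{\max}$ by a value-function comparison argument rather than your backward induction using Riccati monotonicity, but the two are equivalent.
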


In \cite{anderson2007}, their proof of exponential stability is for continuous time infinite horizon case, but the proof technique is quite similar for discrete time and finite horizon setting. 
Proposition \ref{prop:optimal-policy} and Corollary \ref{coro:bound k_t^i} suggest that disturbances from far future do not have too much impact on current control action. The exponential decaying of $\|K_t^{d,i}\|$ implies that the weight on disturbances in the far future will be fairly small. This property enables the possibility of finding a relatively good controller using only limited predictions.

\section{Model Predictive Control}
Model predictive control (MPC) is perhaps the most common control policy for situations where predictions are available \cite{muller2017,ellis2014,ferramosca2010,amrit2011economic}. Generally speaking, an MPC algorithm with $W$-step look-ahead window with stage cost $c_t(x,u)$ and terminal cost $T(x)$ is defined as follows:
\begin{equation} \label{eq:MPC-general}
\begin{split}
    \min_{\{u_k\}_{k=t}^{t+W}} &  \sum_{k=t}^{t+W} c_k(x_k, u_k) + T(x_{t+W+1})\\
    s.t. &   \quad x_{k+1} = f_k(x_k, u_k, d_k)
    \end{split}
\end{equation}
where $x_{k+1} = f_k(x_k, u_k, d_k)$ is the system dynamics. At each time step $t$, MPC solves the above equation and implements  $u_t$ output from the solver. Specifically in our setting, the stage cost functions are given as $c_t(x,u) = x^\top Q_tx + u^\top R_tu$. The terminal cost is chosen to be $T(x) = x^\top P_{\max}x$ for ensuring the stability of the algorithm. The MPC algorithm is given as,
\begin{equation}\label{eq:MPC}
    \begin{split}
    \min_{\{u_k\}_{k=t}^{t+W}} & \sum_{k=t}^{t+W}(x_k^\top Q_kx_k + u_k^\top R_ku_k) + x_{t+W+1}^\top P_{\max}x_{t+W+1}\\
    s.t. &\quad x_{k+1} = Ax_k + B_uu_k + B_dd_{k|t}, k = t, \dots, t+W
\end{split}
\end{equation}
 
Though MPC is a well studied topic in the control community, most results focus on asymptotic analysis, such as stability and convergence to the optimal action as $t\rightarrow \infty$. Non-asymptotic analysis, on the other hand, such as dynamic regret analysis, which also takes transient behavior of the dynamic into consideration, are less studied. However, the recent growing research in online and reinforcement learning calls for more study in characterizing the non-asymptotic performance of MPC.

In order to analyze the dynamic regret of the MPC, we first seek a different representation of the online algorithm which allows us to compare the online MPC policy and the optimal offline policy in \eqref{eq:optimal-policy}. The key observation is that Proposition \ref{prop:optimal-policy} can also be applied to solve \eqref{eq:MPC} for MPC which allow us to represent the MPC in a similar form as in \eqref{eq:optimal-policy}, 
\begin{equation}\label{eq:MPC-rewrite}
    \textup{MPC}: ~u_t^\textup{MPC} = -\lbar{K}_tx_t^\textup{MPC} - \sum_{i=t}^{t+W}\lbar{K}_t^{d,i}B_dd_{i|t}
\end{equation}
Here $\lbar{K}_t,\lbar{K}_t^{d,i}$ are constructed in a similar manner as $K_t, K_t^{d,i}$. First define:
\begin{equation}\label{eq:bar-K-P}
\begin{split}
    \lbar{P}_{t+\tau|t} &:= P_{t+\tau}(\{Q_i,R_i\}_{i=t+\tau}^{t+W}, P_{\max})\\
    \lbar{K}_{t+\tau|t} &:= K_{t+\tau}(\{Q_i,R_i\}_{i=t+\tau}^{t+W}, P_{\max})\\
    &= (R_{t+\tau} + B_u^\top \lbar{P}_{t+\tau+1|t}B_u)^{-1}B_u^\top\lbar{P}_{t+\tau+1|t}A
\end{split}
\end{equation}
which are the cost-to-go and control gain matrices given the $W$ steps ahead prediction $\{Q_i,R_i\}_{i=t}^{t+W}$ and the terminal cost $P_{\max}$ at step $t+W+1$.

Further define the `predicted' state transition matrix at step $t$:
\begin{equation}\label{eq:bar-Phi}
    \lbar{\Phi}_t(t+j, t+i) := \left\{
    \begin{array}{l}
        (A-B_u\lbar{K}_{t+j-1|t})\cdots(A-B_u\lbar{K}_{t+i|t}) , \\
        \qquad \qquad\qquad\qquad\qquad 0\le i < j \le W \\
        I, \qquad \qquad \qquad \qquad \quad0\le i=j\le W
    \end{array}
    \right.
\end{equation}
$\lbar{K}_t, \lbar{K}_t^{d,i}$ are defined as follows:
\begin{equation}\label{eq:bar-K_t^i}
\begin{aligned}
    \lbar{K}_t&:=\lbar{K}_{t|t}\\
    \lbar{K}_t^{d,i}&:= (R_t+B_u^\top \lbar{P}_{t+1|t}B_u)^{-1}B_u^\top\lbar{\Phi}_t(i+1,t+1)^\top\lbar{P}_{i+1|t}
\end{aligned}
\end{equation}
Algorithm \ref{alg:online-alg} summarize the implementation of this MPC. 
\begin{algorithm}[htbp]
\begin{algorithmic}
\caption{Model Predictive Control (MPC) Algorithm} 
\label{alg:online-alg}
\REQUIRE $Q_{\min}, Q_{\max}, R_{\min}, R_{\max}, A, B_u, B_d$
\STATE Pre-calculate $P_{\max}$ using \eqref{eq:P_max}.
\FOR{$t = 1,2,\dots, T-1$}
\STATE Observe $x_t$ and receive predictions $\{d_{i|t}, Q_i, R_i\}_{i=t}^{t+W}$.
\IF{$t\le T-W-1$}
\STATE Calculate $\lbar{P}_{t+1|t},\dots,\lbar{P}_{t+W+1|t},\lbar{K}_{t|t},\dots,\lbar{K}_{t+W|t}$ using \eqref{eq:bar-K-P}.
\STATE Calculate $\lbar{K}_t, \lbar{K}_t^{d,i}$ using \eqref{eq:bar-Phi}\eqref{eq:bar-K_t^i}.
\STATE Implement $u_t$ as in \eqref{eq:MPC-rewrite}
\ELSE
\STATE Calculate $P_{t+1}, \dots, P_T, K_t, \dots, K_{T-1}$ directly via \eqref{eq:iterative-Riccati}\eqref{eq:K_t-LQR}.
\STATE Calculate $K_t^{d,i}$ using \eqref{eq:K_t^i}.
\STATE Implement $u_t$ as in \eqref{eq:optimal-policy}
\ENDIF
\ENDFOR
\end{algorithmic}
\end{algorithm}


Before going into the regret analysis in the next section, we first the exponential stability of the MPC algorithm despite the time-varying cost functions. 
\begin{prop}{(Exponential Stability of MPC)}\label{prop:stability-MPC}
Define the state transition matrix for MPC as:(Note that this is not the same matrix as $\lbar{\Phi}_t$ in \eqref{eq:bar-Phi}.)
\begin{equation*}
    \lbar{\Phi}^\textup{MPC}(t, t_0):= \left\{
    \begin{array}{cc}
         (A-B_u\lbar{K}_{t-1})\cdots(A-B_u\lbar{K}_{t_0}),   &  t>t_0\\
         I,& t= t_0 
    \end{array}
    \right.
\end{equation*}
Then:
$$\|\lbar{\Phi}^\textup{MPC}(t, t_0)\| \le \tau\rho^{t-t_0},$$ with $\tau,\rho$ defined the same as in Proposition \ref{prop:stability-LQR}
\end{prop}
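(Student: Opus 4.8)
\textbf{Proof proposal for Proposition~\ref{prop:stability-MPC}.}

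The plan is to reduce the MPC closed-loop stability to a statement about the optimal cost-to-go matrices $\lbar P_{t|t}$ serving as a common Lyapunov-like certificate, mirroring the standard argument behind Proposition~\ref{prop:stability-LQR}. The key point is that although the gain $\lbar K_t = \lbar K_{t|t}$ actually applied at step $t$ comes from the finite-horizon problem with window $[t,t+W]$ and terminal cost $P_{\max}$, the matrices $\lbar P_{t|t}$ satisfy a Riccati-type relation and — crucially — are uniformly bounded between $Q_{\min}$ and $P_{\max}$ regardless of $t$. First I would establish the uniform bounds $Q_{\min} \preceq \lbar P_{t|t} \preceq P_{\max}$: the lower bound is immediate since $\lbar P_{t|t} = Q_t + (\text{PSD terms}) \succeq Q_t \succeq Q_{\min}$, and the upper bound follows from the monotonicity of the Riccati operator $F_{Q,R}$ together with the fact that the chosen terminal cost $P_{\max}$ is the fixed point $F_{Q_{\max},R_{\max}}(P_{\max})$, so that running the backward recursion from $P_{\max}$ with the smaller matrices $Q_i \preceq Q_{\max}$, $R_i \preceq R_{\max}$ keeps every iterate $\preceq P_{\max}$.

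Next I would derive the one-step Lyapunov decrease. From the definition of $\lbar K_{t|t}$ as the optimal LQR gain for $\lbar P_{t+1|t+1}$... here I must be careful: $\lbar P_{t+1|t}$ (the next iterate inside the window opened at $t$) is \emph{not} the same as $\lbar P_{t+1|t+1}$ (the cost-to-go of the window opened at $t+1$). So the exact optimality identity
\[
  \lbar P_{t|t} = Q_t + \lbar K_{t|t}^\top R_t \lbar K_{t|t} + (A - B_u\lbar K_{t|t})^\top \lbar P_{t+1|t}(A-B_u\lbar K_{t|t})
\]
holds with $\lbar P_{t+1|t}$, not $\lbar P_{t+1|t+1}$. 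To close the telescoping argument across time steps I would instead use the \emph{suboptimality} inequality: for any gain $K$,
\[
  F_{Q_t,R_t}(\lbar P_{t+1|t+1}) \preceq Q_t + K^\top R_t K + (A-B_uK)^\top \lbar P_{t+1|t+1}(A-B_uK),
\]
and argue that $F_{Q_t,R_t}(\lbar P_{t+1|t+1}) \succeq \lbar P_{t|t}$ is \emph{false} in general — so the cleaner route is the classical value-function argument: define $V_t(x) = x^\top \lbar P_{t|t} x$ and show directly that along the MPC trajectory, $V_{t+1}(x_{t+1}^{\textup{MPC}}) \le V_t(x_t^{\textup{MPC}}) - (x_t^{\textup{MPC}})^\top Q_t x_t^{\textup{MPC}}$ in the noiseless case, using that $\lbar P_{t+1|t+1}$ dominates the cost-to-go that the tail of the window-$t$ problem would incur when re-optimized — this is exactly where the terminal choice $P_{\max}$ being a fixed point (hence $F_{Q_i,R_i}(P_{\max}) \preceq P_{\max}$, i.e. the terminal cost is a control-Lyapunov function) pays off, guaranteeing $\lbar P_{t+1|t} \succeq \lbar P_{t+1|t+1}$ after shrinking the horizon by one and re-appending optimal tail.

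Granting the decrease $x_{t+1}^\top \lbar P_{t+1|t+1} x_{t+1} \le x_t^\top \lbar P_{t|t} x_t - x_t^\top Q_t x_t$, the rest is routine: combining with $\lbar P_{t|t} \preceq P_{\max}$ and $Q_t \succeq Q_{\min}$ gives $x_{t+1}^\top \lbar P_{t+1|t+1} x_{t+1} \le \big(1 - \tfrac{\lambda_{\min}(Q_{\min})}{\lambda_{\max}(P_{\max})}\big)\, x_t^\top \lbar P_{t|t} x_t = \rho^2\, x_t^\top \lbar P_{t|t} x_t$, and iterating yields $x_t^\top \lbar P_{t|t} x_t \le \rho^{2(t-t_0)} x_{t_0}^\top \lbar P_{t_0|t_0} x_{t_0}$. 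Translating back to the transition matrix via $\lambda_{\min}(Q_{\min})\|x_t\|^2 \le x_t^\top \lbar P_{t|t} x_t$ and $x_{t_0}^\top \lbar P_{t_0|t_0} x_{t_0} \le \lambda_{\max}(P_{\max})\|x_{t_0}\|^2$, and noting $x_t = \lbar\Phi^{\textup{MPC}}(t,t_0) x_{t_0}$ for arbitrary $x_{t_0}$ in the noiseless dynamics, gives $\|\lbar\Phi^{\textup{MPC}}(t,t_0)\| \le \sqrt{\lambda_{\max}(P_{\max})/\lambda_{\min}(Q_{\min})}\,\rho^{t-t_0} = \tau\rho^{t-t_0}$. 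I expect the main obstacle to be the monotonicity step $\lbar P_{t+1|t} \succeq \lbar P_{t+1|t+1}$ — i.e. showing that shortening the prediction horizon by one stage (while keeping the Lyapunov terminal cost $P_{\max}$) only increases the cost-to-go — which requires invoking that $P_{\max}$ is a valid terminal penalty in the sense $F_{Q_i,R_i}(P_{\max}) \preceq P_{\max}$ for all admissible $Q_i,R_i$, a consequence of Assumption~\ref{assump:bounded-S-R} and Riccati monotonicity; everything downstream is bookkeeping with the already-established constants $\tau,\rho$.
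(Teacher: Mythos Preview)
Your proposal is correct and follows essentially the same route as the paper's proof: define the Lyapunov function $V_t(x)=x^\top\lbar P_{t|t}x$, establish the uniform sandwich $Q_{\min}\preceq\lbar P_{t|t}\preceq P_{\max}$, prove the one-step decrease $V_{t+1}(x_{t+1})\le V_t(x_t)-x_t^\top Q_t x_t$, and iterate to obtain $\|\lbar\Phi^{\textup{MPC}}(t,t_0)\|\le\tau\rho^{t-t_0}$. The only cosmetic difference is in justifying the key inequality $\lbar P_{t+1|t}\succeq\lbar P_{t+1|t+1}$: the paper unrolls the terminal cost $x^\top P_{\max}x$ into the infinite tail $\sum_{s\ge t+W+1}(x_s^\top Q_{\max}x_s+u_s^\top R_{\max}u_s)$ and then replaces $(Q_{\max},R_{\max})$ at stage $t{+}W{+}1$ by the smaller $(Q_{t+W+1},R_{t+W+1})$, whereas you invoke Riccati monotonicity together with $F_{Q_i,R_i}(P_{\max})\preceq F_{Q_{\max},R_{\max}}(P_{\max})=P_{\max}$ directly --- two phrasings of the same fact.
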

Stability results of MPC is well studied in literature \cite{paolo08,diehl2010,angeli2011average,angeli2016,grune2014,grune2020} and this proposition almost follows the results in \cite{paolo08} exactly. Nevertheless, a self-contained proof is provided in our online report \cite{online_report} for reader's reference.

\section{Dynamic Regret Analysis}
Recall the definition of $\rho= \sqrt{1-\frac{\lambda_{\min}(Q_{\min})}{\lambda_{\max}(P_{\max})}}$ in Proposition~\ref{prop:stability-LQR} and further define another factor:
\begin{align*}
    \gamma &= \frac{\lambda_{\max}(A^\top P_{\max} A)}{\lambda_{\min}(Q_{\min}) + \lambda_{\max}(A^\top P_{\max} A)},
\end{align*}
The regret of the MPC in Algorithm~\ref{alg:online-alg} is bounded as follows.
\begin{theorem}\label{thm:regret-analysis-main}
The regret of MPC in  \eqref{eq:MPC-rewrite} is upper bounded by:
\begin{small}
\begin{equation}\label{eq:regret}
\begin{split}
   &  \textup{Regret}(\textup{MPC})\\
   &\le\! \alpha_1\!\underbrace{\left[\frac{1}{1\!-\rho}(\gamma^W\!+\rho^W) \!+ \gamma\frac{\rho{\!^W}\!-\gamma^{\!W}}{\rho\!-\gamma}\right]^2\left(\!\|x_1\|^2 \!+ \sum_{t=1}^T\|B_d d_t\|^2\!\right)}_{\textup{Part I}}\\
   &+ \!\alpha_2 \underbrace{\left(\frac{1}{1\!-\rho}\!+\frac{\gamma^W}{1-\rho^{2}}\right)\left(1\!+\frac{\gamma^{\!W}}{1\!-\rho}\right)\sum_{j=1}^{T-1}\sum_{i=j}^{j+W} \rho^{\!i\!-j}\|B_de_{i|j}\|^2}_{\textup{Part II}}),
\end{split}
\end{equation}
\end{small}
where $\alpha_1, \alpha_2$ are constants that only relate to $R_{\min}, R_{\max}, Q_{\min}, Q_{\max}, A, B_u$, but not $W$. 
\end{theorem}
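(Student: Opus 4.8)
The plan is to first reduce the regret to a quadratic form in the control-action deviations, then bound those deviations using the exponentially-decaying structure of the gain matrices established in Corollary~\ref{coro:bound k_t^i}. The key first step — which the paper advertises in the introduction — is a \emph{general regret formula}: for any online policy $\bm\pi$ generating actions $u_t^{\bm\pi}$, one should be able to write $\textup{Regret}(\bm\pi) = \sum_{t=1}^{T-1} (u_t^{\bm\pi} - u_t^{\bm\pi^*})^\top (R_t + B_u^\top P_{t+1} B_u)(u_t^{\bm\pi} - u_t^{\bm\pi^*})$, or something very close to it. I would derive this by the cost-difference (telescoping value-function) lemma: write $J(\mathbf x^{\bm\pi},\mathbf u^{\bm\pi}) - J^* = \sum_t \big[ V_t(x_t^{\bm\pi}) - V_{t+1}(x_{t+1}^{\bm\pi}) - c_t(x_t^{\bm\pi}, u_t^{\bm\pi})\big]^{-}$ suitably signed, where $V_t(x) = x^\top P_t x$ is the optimal cost-to-go, and then use that the Bellman operator is \emph{exactly quadratic} for LQR so that the per-step Bellman gap is precisely the quadratic form above in $u_t^{\bm\pi} - K_t x_t^{\bm\pi}$ (noting $u_t^{\bm\pi^*}$ along the \emph{online} trajectory equals $-K_t x_t^{\bm\pi} - \sum_{i\ge t} K_t^{d,i} B_d d_i$). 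Since $R_t + B_u^\top P_{t+1} B_u \preceq R_{\max} + B_u^\top P_{\max} B_u$, this immediately gives $\textup{Regret}(\textup{MPC}) \le \|R_{\max} + B_u^\top P_{\max} B_u\| \sum_t \|u_t^{\textup{MPC}} - u_t^{\bm\pi^*}|_{x_t^{\textup{MPC}}}\|^2$, which absorbs part of the $\alpha_1,\alpha_2$ constants.

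Next I would expand the action gap. Using the MPC representation \eqref{eq:MPC-rewrite} and the optimal form \eqref{eq:optimal-policy}, the per-step deviation splits into three pieces: (i) a \emph{gain-mismatch} term $(\lbar K_t - K_t) x_t^{\textup{MPC}}$, (ii) a \emph{truncation} term $\sum_{i=t+W+1}^{T-1} K_t^{d,i} B_d d_i$ coming from disturbances outside the window, and (iii) a \emph{prediction-error} term $\sum_{i=t}^{t+W} \lbar K_t^{d,i} B_d e_{i|t}$ plus the in-window discrepancy $\sum_{i=t}^{t+W}(\lbar K_t^{d,i} - K_t^{d,i}) B_d d_i$. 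Term (ii) is controlled directly by Corollary~\ref{coro:bound k_t^i}: $\|\sum_{i>t+W} K_t^{d,i} B_d d_i\| \lesssim \rho^W \sum \rho^{i-t-W}\|B_d d_i\|$, giving the $\rho^W$ contribution in Part~I. For terms (i) and (iii) I need a quantitative statement that the finite-horizon Riccati recursion with terminal cost $P_{\max}$ converges to the true cost-to-go $P_t$ at a geometric rate in the window length: $\|\lbar P_{t+1|t} - P_{t+1}\| \lesssim \gamma^W$, and correspondingly $\|\lbar K_{t+\tau|t} - K_{t+\tau}\|$, $\|\lbar\Phi_t(\cdot,\cdot) - \Phi(\cdot,\cdot)\|$, and $\|\lbar K_t^{d,i} - K_t^{d,i}\|$ are all $O(\gamma^W)$ (with the appropriate $\rho^{i-t}$ decay factor riding along, explaining the $\frac{\gamma^W}{1-\rho}$, $\frac{\gamma^W}{1-\rho^2}$, and $\gamma\frac{\rho^W - \gamma^W}{\rho-\gamma}$ shapes). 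This is where the definition $\gamma = \frac{\lambda_{\max}(A^\top P_{\max}A)}{\lambda_{\min}(Q_{\min}) + \lambda_{\max}(A^\top P_{\max}A)}$ enters — it is the contraction factor of the Riccati map around $P_{\max}$ (or more precisely a bound on $\|A - B_u\lbar K\|$-type products weighted against $Q_{\min}$), and I would prove the Riccati-perturbation bound by induction on $\tau$ running the recursion backward from the terminal condition, using monotonicity of $F_{Q,R}$ and Lipschitz-type estimates. Finally, I would bound $\|x_t^{\textup{MPC}}\|$ uniformly via the exponential stability of MPC (Proposition~\ref{prop:stability-MPC}): $\|x_t^{\textup{MPC}}\| \lesssim \tau \rho^{t-1}\|x_1\| + \sum_{s<t}\tau\rho^{t-1-s}(\|B_d d_s\| + \text{prediction-error contributions})$, so that $\sum_t \|x_t^{\textup{MPC}}\|^2 \lesssim \|x_1\|^2 + \sum_t \|B_d d_t\|^2 + (\text{error terms})$, matching the two parenthesized factors in Parts~I and~II.

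Assembling: square the three-way split (using $\|a+b+c\|^2 \le 3(\|a\|^2+\|b\|^2+\|c\|^2)$), sum over $t$, and substitute the geometric bounds; the gain-mismatch and truncation terms together yield Part~I with the $[\frac{1}{1-\rho}(\gamma^W+\rho^W) + \gamma\frac{\rho^W-\gamma^W}{\rho-\gamma}]^2$ prefactor, and the prediction-error term yields Part~II with $\sum_j \sum_{i=j}^{j+W}\rho^{i-j}\|B_d e_{i|j}\|^2$ weighted by $(\frac{1}{1-\rho} + \frac{\gamma^W}{1-\rho^2})(1 + \frac{\gamma^W}{1-\rho})$ — the extra $\gamma^W$ factors there come from the error also propagating through the perturbed transition matrix $\lbar\Phi_t$, and from the error feeding into the state $x_t^{\textup{MPC}}$ itself. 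The main obstacle is the Riccati-perturbation estimate $\|\lbar P_{t+\tau|t} - P_{t+\tau}\| = O(\gamma^W)$ with the correct constant and the correct interplay with $\rho$: getting the geometric rate $\gamma$ (rather than a cruder $\rho$) requires carefully tracking how the Riccati operator contracts — this is the technical heart and the step most likely to need the detailed structure of $F_{Q,R}$ and Assumptions~\ref{assump:stabilizability}–\ref{assump:bounded-S-R}. The bookkeeping of which terms carry $\rho^{i-t}$ versus $\gamma^W$ versus both is also delicate but routine once the perturbation lemma is in hand.
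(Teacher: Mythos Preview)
Your proposal matches the paper's approach almost exactly: the four-step structure (general regret formula via cost-difference lemma $\to$ three-way decomposition of $u_t^{\textup{MPC}}-u_t^*$ into truncation, prediction, and matrix-approximation error $\to$ bound each piece via exponential decay of $K_t^{d,i},\lbar K_t^{d,i}$ $\to$ control $\|x_t^{\textup{MPC}}\|$ via MPC stability) is precisely what the paper does, and your identification of the Riccati-perturbation bound $\|\lbar P_{i+1|t}-P_{i+1}\|=O(\gamma^{t+W-i})$ as the technical heart is correct.

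There is one place where your sketch is under-specified and slightly off. You describe $\gamma$ as ``a bound on $\|A-B_u\lbar K\|$-type products weighted against $Q_{\min}$'' and propose to obtain the Riccati-perturbation bound by ``monotonicity of $F_{Q,R}$ and Lipschitz-type estimates'' in operator norm. That is not how the paper (or, as far as I can see, any elementary argument) produces the rate $\gamma$: a direct operator-norm Lipschitz estimate on $F_{Q,R}$ need not have constant below $1$. The paper instead invokes the Thompson/Riemannian-type metric $\delta_\infty(P,\lbar P)=\|\log(P^{-1/2}\lbar P P^{-1/2})\|$ from \cite{Krauth19}, under which $F_{Q,R}$ is a genuine contraction with factor $\gamma=\frac{\lambda_{\max}(A^\top P_{\max}A)}{\lambda_{\min}(Q_{\min})+\lambda_{\max}(A^\top P_{\max}A)}$; Lemma~\ref{lemma:bound-P_t-difference} then transfers this back to an operator-norm bound. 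Without this tool (or an equivalent), you would likely be stuck with a rate depending on $\rho$ only, losing the $\gamma^W$ factors in Parts~I and~II. A minor secondary point: the paper's bookkeeping avoids the crude $\|a+b+c\|^2\le 3(\cdots)$ split by first expressing $u_t^{\textup{MPC}}-u_t^*$ as $N_{0|t}x_1+\sum_i N_{i|t}B_d d_i+\sum_{i,j}L_{(i,j)|t}B_d e_{i|j}$ and then applying a diagonal-dominance inequality (Lemma~\ref{lemma:auxillary}) to get the exact prefactors stated; your approach would still give the theorem but with larger constants.
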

The regret bound \eqref{eq:regret} consists of two terms. Part I depends on the total magnitude of the disturbances: $E_d:=\sum_{t=1}^T\|B_d d_t\|^2.$ Part II depends on the error of the predictions $\|B_de_{i|j}\|$. Before showing the proof, we make the following discussions on interpreting the regrets.

\vspace{4pt}
\noindent\textbf{Factors $\rho$ and $\gamma$:}
Proposition \ref{prop:stability-LQR} and \ref{prop:stability-MPC} show that the factor $\rho$ can be interpreted as the contraction factor of $\Phi, \lbar{\Phi}^\textup{MPC}$. We now further explain the meaning of $\gamma$. In a word, $\gamma$ captures the contraction of Ricatti iteration $F_{Q,R}(\cdot)$. In \cite{Krauth19}, they introduce a special metric on positive definite matrices
\begin{equation*}
    \delta_{\infty}(P,\lbar{P}) := \|\log(P^{-\frac{1}{2}}\lbar{P}P^{-\frac{1}{2}})\|,
\end{equation*}
 and show that $F_{Q,R}(\cdot)$ is contractive under $\delta_{\infty}(\cdot,\cdot)$, i.e.,
 \begin{equation}\label{eq:ricatti-contraction}
     \delta_{\infty}(F_{Q,R}(P), F_{Q,R}(\lbar{P})) \le \gamma \delta_{\infty}(P, \lbar{P}).
 \end{equation}
\eqref{eq:ricatti-contraction} is going to play a key role in bounding the difference $\|K_t - \lbar{K}_t\|$ and $\|K_t^{d,i} - \lbar{K}_t^{d,i}\|$ and therefore the cost difference between $J^\textup{MPC}$ and $J^*$, which will be further explained later. 

\vspace{4pt}
\noindent\textbf{Impact of $e_{i|j}$:} Let's take a more careful look at Part II, 
\begin{align*}
   \text{Part II} \sim~ O( \sum_{j=1}^{T-1}\sum_{i=j}^{j+W} \rho^{i-j}\|B_de_{i|j}\|^2)
\end{align*}
The coefficient in front of $\|B_de_{i|j}\|$ is dominated by $\rho^{i-j}$, which decays exponentially with $i-j$, which means that the effect of prediction inaccuracy decays exponentially fast as forecast goes to the far future. Since longer-term prediction tends to be less accurate, this suggests that MPC effectively alleviates the impact of multi-step-ahead prediction errors.

\vspace{4pt}
\noindent\textbf{Choice of $W$:} Part I involves the magnitude of the disturbances, and the coefficient in front of $\left(\|x_1\|^2 \!+ \sum_{t=1}^T\|B_d d_t\|^2\right)$
is roughly of scale:
\begin{align*}
   \text{Part I} \sim ~ O\left(\gamma_0^{2W} \left(\|x_1\|^2 \!+ \sum_{t=1}^T\|B_d d_t\|^2\right)\right)
\end{align*}
where $\gamma_0 = \max\{\rho, \gamma\}$. Thus if the prediction is accurate, i.e., $e_{i|j} = 0$, the regret decays exponentially fast with respect to $W$. This result matches with the conclusions in \cite{li2019}\cite{yu2020}. Additionally, when there's prediction error, the optimal choice of $W$ depends on the tradeoff between $E_d$ and the prediction errors.

For more insightful discussions, we consider non-decreasing $k-$step-ahead prediction errors, i.e. $\|B_de_{i+1|j}\|\ge \|B_de_{i|j}\|$. It can be shown that Part I increases with $E_d$ and Part II increases with prediction errors. Further, as $W$ increases, Part I decreases but Part II increases. Thus when Part I dominates the regret bound, i.e. $E_d$ is large when compared with the prediction errors, selecting a large $W$ reduces the regret bound. On the contrary, when Part II dominates the regret bound, i.e. the prediction errors are large when compared with $E_d$, a small $W$ is preferred. The choice of $W$ above are quite intuitive: when the environment is perturbed by large disturbances but the disturbances could be roughly predicted, one should use more predictions to prepare for future changes; however, with poor prediction and mildly perturbed environments, one should ignore these long-term predictions whose quality tends to be low. Our the regret upper bound provides a quantitative way to pick a balanced prediction window $W$.

\subsection{Proof Sketch}
Due to the space limit, we only give a proof sketch of Theorem \ref{thm:regret-analysis-main} here. The detailed proof is available in our online report~\cite{online_report}. Roughly speaking the proof can be decomposed into 4 steps:
\begin{itemize}
    \item Applying `cost difference lemma' to derive a general regret formula, which decomposes the regret into the sum of terms w.r.t `control action error' $u_t^\textup{MPC} - u_t^*$
    \item Decomposing `control action error' $u_t^\textup{MPC} - u_t^*$ into i) `truncation error' $\sum_{t+W+1}^{T-1}K_t^{d,i}d_{i}$, ii) `prediction error' ($-\sum_{i=t}^{t+W}\lbar{K}_t^{d,i}e_{i|t}$), and iii)  `matrices approximation error'$(K_t - \lbar{K}_t)x_t^\textup{MPC} + \sum_{i=t}^{t+W}(K_t^i - \lbar{K}_t^{d,i})d_{i}$
    \item Bounding the `truncation error' and `prediction error' by bounding $\|K_t^{d,i}\|, i\ge t+W+1$ and $\lbar{K}_t^{d,i}, t\le i\le t+W$ respectively. 
    \item Bounding the `matrices approximation error' by bounding $\|K_t - \lbar{K}_t\|$ and $\|K_t^{d,i} - \lbar{K}_t^{d,i}\|$, along with the exponential stability of online MPC algorithm.
\end{itemize}

\vspace{4pt}
\noindent\textit{\textbf{Step 1:}} We first introduce our general regret bound derived from `cost difference lemma'. Cost difference lemma is a very well-know lemma in reinforcement learning (RL) community and plays an important role in deriving many RL algorithms such as Trust Region Policy Optimization (TRPO \cite{schulman2015}). In \cite{fazel2018}, they also use it to prove gradient dominance for standard, time-invariant LQR problem. However, previous results are mostly for time-invariant systems and we haven't found an exact formula that applies the lemma to LQR problems with time-varying cost functions and disturbances. Thus we now state our result:
\begin{lemma}{(General Regret Formula)}
\label{lemma:regret-formula}
For an online control algorithm $\bm\pi$ that satisfies:
\begin{equation*}
    u_t = \pi_t(x_t, \{d_\tau, Q_\tau, R_\tau\}_{\tau=1}^t, \{d_{i|t,Q_i,R_i}\}_{i=t}^{t+W}),
\end{equation*}
i.e., control action does not rely on history $\{x_\tau\}_{\tau=1}^{t-1}$, the regret can be written as:
\begin{equation}
\begin{split}
  \textup{Regret}(\bm \pi)= \sum_{t=1}^{T-1} (u_t^{\bm \pi} - u_t^*)^\top (R_t + B_u^\top P_{t+1} B_u) (u_t^{\bm \pi} - u_t^*) 
\end{split}
\end{equation}
\begin{align*}
    \text{where: }\quad u_t^{\bm \pi} &= \pi_t(x_t^{\bm\pi}, \{d_\tau, Q_\tau, R_\tau\}_{\tau=1}^t, \{d_{i|t,Q_i,R_i}\}_{i=t}^{t+W}),\\
    u_t^* &= \pi_t^*(x_t^{\bm \pi}, d_t,\dots,d_{T-1})
\end{align*}
and that $\{x_t^{\bm \pi}\}$ is the trajectory generated by the online control algorithm  $\bm \pi$.
\end{lemma}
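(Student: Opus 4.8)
The plan is to establish this as an exact identity, by combining the deterministic version of the ``cost difference lemma'' with the quadratic structure of LQR. Fix the \emph{realized} disturbance sequence $d_1,\dots,d_{T-1}$ and let $V_t^*(x)$ be the optimal cost-to-go from state $x$ at time $t$ onward under these disturbances, i.e.\ the minimum of $\sum_{i=t}^{T-1}(x_i^\top Q_ix_i+u_i^\top R_iu_i)+x_T^\top Q_Tx_T$ subject to $x_t=x$ and the dynamics \eqref{equ: linear dynamics}. A backward induction shows $V_T^*(x)=x^\top Q_Tx$ and, in general, $V_t^*(x)$ is a quadratic in $x$ whose quadratic coefficient matrix is exactly the Riccati iterate $P_t$ of \eqref{eq:iterative-Riccati} (the disturbances only shift the linear and constant terms), and the Bellman equation $V_t^*(x)=\min_u g_{t,x}(u)$ holds, where $g_{t,x}(u):=x^\top Q_tx+u^\top R_tu+V_{t+1}^*(Ax+B_uu+B_dd_t)$ and, by Proposition~\ref{prop:optimal-policy}, the minimizer of $g_{t,x}$ is $\pi_t^*(x,d_t,\dots,d_{T-1})$. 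The hypothesis that $\pi_t$ does not use the past states $\{x_\tau\}_{\tau<t}$ is what aligns the setup with the Markov-policy form of the cost-difference lemma: it lets both $u_t^{\bm\pi}=\pi_t(x_t^{\bm\pi},\cdot)$ and $u_t^*=\pi_t^*(x_t^{\bm\pi},\cdot)$ be read as functions of the common realized state $x_t^{\bm\pi}$ together with exogenous data, so that $g_{t,x_t^{\bm\pi}}(u_t^{\bm\pi})$ is precisely the state--action value of $\bm\pi^*$ evaluated along the online trajectory.

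\textbf{Telescoping.} Writing $x_t=x_t^{\bm\pi},\,u_t=u_t^{\bm\pi}$ and using that $x_1$ is fixed, so $J^*=V_1^*(x_1)$, together with $\sum_{t=1}^{T-1}\big(V_{t+1}^*(x_{t+1})-V_t^*(x_t)\big)=V_T^*(x_T)-V_1^*(x_1)=x_T^\top Q_Tx_T-J^*$ and $x_{t+1}=Ax_t+B_uu_t+B_dd_t$, the stage costs regroup as
\[
J(\mathbf{x}^{\bm\pi},\mathbf{u}^{\bm\pi})-J^*=\sum_{t=1}^{T-1}\Big(g_{t,x_t}(u_t^{\bm\pi})-g_{t,x_t}(u_t^*)\Big),
\]
where the Bellman equation was used to replace $V_t^*(x_t)$ by $g_{t,x_t}(u_t^*)=\min_u g_{t,x_t}(u)$. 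This is exactly the performance/cost-difference lemma of \cite{schulman2015,fazel2018}, specialized to a deterministic system, applied to the pair $\bm\pi$ and $\bm\pi^*$.

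\textbf{Evaluating each summand, and the main point of care.} Since $V_{t+1}^*$ is quadratic in its argument, $g_{t,x}(\cdot)$ is quadratic in $u$ with constant Hessian $2(R_t+B_u^\top P_{t+1}B_u)$, which is positive definite by Assumption~\ref{assump:bounded-S-R} together with $P_{t+1}\succ 0$; hence $u_t^*$ is its unique minimizer and the second-order Taylor expansion of $g_{t,x_t}$ about $u_t^*$ is exact with vanishing first-order term, giving
\[
g_{t,x_t}(u_t^{\bm\pi})-g_{t,x_t}(u_t^*)=(u_t^{\bm\pi}-u_t^*)^\top(R_t+B_u^\top P_{t+1}B_u)(u_t^{\bm\pi}-u_t^*),
\]
and substituting into the telescoped sum yields the claimed formula. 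There is no estimation step, so nothing is really an ``obstacle''; the only genuinely problem-specific inputs are (i) that the optimal cost-to-go keeps the \emph{same} quadratic coefficient $P_t$ in the presence of additive disturbances and (ii) the exactness of the quadratic expansion in $u$. Accordingly, the one thing to get right is the backward-induction bookkeeping of the affine part of $V_t^*$ and the cancellation of the boundary terms $V_1^*(x_1)=J^*$ and $V_T^*(x_T)=x_T^\top Q_Tx_T$.
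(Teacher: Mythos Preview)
Your proposal is correct and follows essentially the same approach as the paper: apply the cost-difference/telescoping identity to obtain $\textup{Regret}(\bm\pi)=\sum_t\big(g_{t,x_t}(u_t^{\bm\pi})-g_{t,x_t}(u_t^*)\big)$, and then use the quadratic structure of $V_{t+1}^*$ to evaluate each summand. The only difference is in execution: the paper first builds an explicit representation $V_t^*(x)=y_t^{t:T}(x)^\top V_t\,y_t^{t:T}(x)$ via stacked state vectors and block matrices (needed elsewhere to derive the optimal policy of Proposition~\ref{prop:optimal-policy}), and then expands the quadratic by hand to see the cross term vanish; you instead appeal directly to the fact that the Hessian of $V_{t+1}^*$ is $P_{t+1}$, so $g_{t,x}$ is an exact quadratic in $u$ with Hessian $2(R_t+B_u^\top P_{t+1}B_u)$ and minimizer $u_t^*$, whence the first-order term in the Taylor expansion about $u_t^*$ vanishes automatically.
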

The proof of Lemma \ref{lemma:regret-formula} is given in the Appendix. Lemma \ref{lemma:regret-formula} successfully decouples the regret into the summation of a quadratic cost on `control action error': $u_t^{\bm \pi} - u_t^*$, so instead of analyzing $\textup{Regret}(\bm \pi)$ that entangles control action in all time step, we only need to bound the `control action error' at each time step separately. Note Lemma \ref{lemma:regret-formula} holds for policy $\bm \pi$ that can be more general than MPC algorithm.

\vspace{4pt}
\noindent\textit{\textbf{Step 2:}} From \eqref{eq:optimal-policy} \eqref{eq:MPC-rewrite}, the `control action error' of MPC algorithm can be decomposed as:
\begin{equation} \label{eq:control_difference}
\begin{split}
      u_t^\textup{MPC} - u_t^* =  \underbrace{\sum_{i=t+W+1}^{T-1}K_t^{d,i}d_{i}}_{\text{Truncation Error}} -\underbrace{\sum_{i=t}^{t+W}\lbar{K}_t^{d,i}e_{i|t}}_{\text{Prediction Error}} \\+ \underbrace{(K_t - \lbar{K}_t)x_t^\textup{MPC} + \sum_{i=t}^{t+W}(K_t^{d,i} - \lbar{K}_t^{d,i})d_{i}}_{\text{Matrices Approximation Error}} .
\end{split}
\end{equation}
We name the term $\sum_{t+W+1}^{T-1}K_t^{d,i}d_{i}$ as `truncation error', since this term appears when we throw away future disturbances after step $t+W$. The term $(K_t - \lbar{K}_t)x_t^\textup{MPC} + \sum_{i=t}^{t+W}(K_t^{d,i} - \lbar{K}_t^{d,i})d_{i}$ is named `matrices approximation error' because this term will disappear if the controller has knowledge of all future $Q_t, R_t$'s. Note that in \cite{yu2020}'s setting, they study constant $Q_t = Q, R_t = R$, in which case matrices approximation error is zero and need not be taken into consideration. The term $-\sum_{i=t}^{t+W}\lbar{K}_t^{d,i}e_{i|t}$ is called `prediction error' for the reason that it is related to the prediction accuracy at time step $t$.

\vspace{4pt}
\noindent\textit{\textbf{Step 3:}} We'll look into truncation error and prediction error first, because they are easier to analyse. It has been pointed out in Corollary \ref{coro:bound k_t^i} that $\|K_t^{d,i}\|$ decays exponentially w.r.t. $i-t$. Note that for truncation error $\sum_{t+W+1}^{T-1}K_t^{d,i}d_{i}$ the summation start from $t+W+1$, thus roughly speaking this term is of order $O(\rho^{W+1})$.

As for prediction error, similar to Proposition \ref{prop:stability-LQR} and Corollary \ref{coro:bound k_t^i}, we can show that
\begin{equation*}
     \|\lbar{\Phi}_t(t+j,t+i)\| \le \tau \rho^{j-i},~ j\ge i
\end{equation*}
\begin{equation*}
    \|\lbar{K}_t^{d,i}\| \le \frac{\tau\|B_u\|\lambda_{\max}(P_{\max})}{\lambda_{\min}(R_{\min})}\rho^{i-t}, i\ge t
\end{equation*}
Thus $\|\lbar{K}_t^{d,i}\|$ is roughly of scale $\rho^{i-t}$, which suggests that the effect caused by prediction error decays exponentially as prediction goes to the far future.

\vspace{4pt}
\noindent\textit{\textbf{Step 4: }}`Matrices approximation error' is a little bit harder to handle, because we not only need to bound the error term $\|K_t - \lbar{K}_t\|, \|K_t^{d,i} - \lbar{K}_t^{d,i}\|$, but also need to make sure that $\{x_t^\textup{MPC}\}$, which is the trajectory generated by the MPC algorithm, does not explode and become unbounded. The latter is guaranteed by the exponential stability of MPC algorithm, while bounding the error terms $\|K_t - \lbar{K}_t\|, \|K_t^{d,i} - \lbar{K}_t^{d,i}\|$ requires the contractivity of Ricatti iteration shown in \eqref{eq:ricatti-contraction}. This is the point where factor $\gamma$ emerges in the regret.
\begin{lemma}\label{lemma:bound-K_t^i-difference}
\begin{align*}
    &\|K_t - \lbar{K}_t\| \le \alpha_3 \gamma^W\\
    &\|K_t^{d,i}-\lbar{K}_t^{d,i}\|\le \alpha_4 \gamma^{W-i+t}\rho^{i-t}, ~t\le i\le t+W
\end{align*}
where,
\begin{align*}
    &\alpha_3 = 2\frac{\|A\|\|B_u\|}{\lambda_{\min}(R_{\min})}\frac{\lambda_{\max}(P_{\max})^4}{\lambda_{\min}(Q_{\min})^2}\left(\|B_uR_{\min}^{-1}B_u^\top\| + 1\right)\frac{1}{1-\gamma}\\
    &\alpha_4 = 2\frac{\|B_u\|}{\lambda_{\min}(R_{\min})}\frac{\lambda_{\max}(P_{\max})^4}{\lambda_{\min}(Q_{\min})^2}\left(\|B_uR_{\min}^{-1}B_u^\top\| + 1\right)\frac{1}{1-\gamma}
    \end{align*}
\end{lemma}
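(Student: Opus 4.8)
\noindent\textbf{Proof plan for Lemma~\ref{lemma:bound-K_t^i-difference}.}
The plan is to trace how truncating the horizon at step $t+W$ and replacing the tail by the terminal cost $P_{\max}$ perturbs the backward Riccati recursion, with the $\delta_\infty$-contraction \eqref{eq:ricatti-contraction} as the main engine. First I would record two uniform spectral bounds: by monotonicity of $F_{Q,R}(\cdot)$ in $Q$, in $R$, and in its argument, together with Assumption~\ref{assump:bounded-S-R}, $Q_T\preceq Q_{\max}\preceq P_{\max}$, and the fixed-point identity \eqref{eq:P_max}, one has $\lambda_{\min}(Q_{\min})I\preceq P_t\preceq P_{\max}$ for every $t$, and identically $\lambda_{\min}(Q_{\min})I\preceq\lbar{P}_{t+\tau|t}\preceq P_{\max}$. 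Consequently $\delta_\infty(P_{t+W+1},P_{\max})\le D:=\log\big(\lambda_{\max}(P_{\max})/\lambda_{\min}(Q_{\min})\big)$, a constant not depending on $W$.

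The structural observation is that, for $t\le j\le t+W$, both $P_{j+1}$ and $\lbar{P}_{j+1|t}$ are images of $P_{t+W+1}$ and of $P_{\max}=\lbar{P}_{t+W+1|t}$, respectively, under the \emph{same} composition $F_{Q_{j+1},R_{j+1}}\circ\cdots\circ F_{Q_{t+W},R_{t+W}}$ of $W-(j-t)$ Riccati steps (the cost matrices are shared, since cost predictions are exact). Iterating \eqref{eq:ricatti-contraction} gives $\delta_\infty(P_{j+1},\lbar{P}_{j+1|t})\le\gamma^{W-(j-t)}D$. Converting this to the operator norm on the order interval $\{\lambda_{\min}(Q_{\min})I\preceq\cdot\preceq P_{\max}\}$ — via $\|P-\lbar{P}\|\le\lambda_{\max}(P_{\max})\big(e^{\delta_\infty(P,\lbar{P})}-1\big)$, $e^x-1\le xe^x$, and $e^{\gamma^{W-(j-t)}D}\le e^{D}$ (here $\gamma<1$ is used) — yields $\|P_{j+1}-\lbar{P}_{j+1|t}\|\le C_1\,\gamma^{W-(j-t)}$ with $C_1$ independent of $W$.

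From $P$-perturbations to gain-perturbations: since $R_t+B_u^\top PB_u\succeq R_{\min}$ for all $P\succeq 0$, the map $P\mapsto(R_t+B_u^\top PB_u)^{-1}B_u^\top PA$ is Lipschitz on $\{0\preceq P\preceq P_{\max}\}$ (split the difference into an inverse-difference term and a $P$-difference term; its Lipschitz constant is of order $\frac{\|A\|\|B_u\|}{\lambda_{\min}(R_{\min})}\big(1+\|B_uR_{\min}^{-1}B_u^\top\|\lambda_{\max}(P_{\max})\big)$), so the case $j=t$ gives $\|K_t-\lbar{K}_t\|\le\alpha_3\gamma^{W}$, and more generally $\|K_j-\lbar{K}_{j|t}\|\le\alpha_3\gamma^{W-(j-t)}$, with $\alpha_3$ absorbing $C_1$, the gain Lipschitz constant, and $e^{D}$. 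For $\|K_t^{d,i}-\lbar{K}_t^{d,i}\|$, subtract \eqref{eq:bar-K_t^i} from \eqref{eq:K_t^i} and split it into three pieces: (i) $(R_t+B_u^\top P_{t+1}B_u)^{-1}$ vs.\ its barred analogue, controlled by $\|P_{t+1}-\lbar{P}_{t+1|t}\|$, by $\|\Phi(i+1,t+1)\|\le\tau\rho^{i-t}$ (Proposition~\ref{prop:stability-LQR}), and by $\|P_{i+1}\|\le\lambda_{\max}(P_{\max})$; (ii) $\Phi(i+1,t+1)$ vs.\ $\lbar{\Phi}_t(i+1,t+1)$; (iii) $P_{i+1}$ vs.\ $\lbar{P}_{i+1|t}$, controlled similarly with $\|\lbar{\Phi}_t(i+1,t+1)\|\le\tau\rho^{i-t}$ (same argument as Proposition~\ref{prop:stability-LQR}). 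Since $\|P_{j+1}-\lbar{P}_{j+1|t}\|\le C_1\gamma^{W-(j-t)}\le C_1\gamma^{W-(i-t)}$ for all $t\le j\le i$, pieces (i) and (iii) are already of the target order $\gamma^{W-(i-t)}\rho^{i-t}$. For piece (ii) I use the hybrid/telescoping identity $\Phi(i+1,t+1)-\lbar{\Phi}_t(i+1,t+1)=\sum_{j=t+1}^{i}\Phi(i+1,j+1)(-B_u)(K_j-\lbar{K}_{j|t})\lbar{\Phi}_t(j,t+1)$, bound the partial products by $\tau\rho^{i-j}$ and $\tau\rho^{j-t-1}$, insert $\|K_j-\lbar{K}_{j|t}\|\le\alpha_3\gamma^{W-(j-t)}$, and sum $\sum_{j=t+1}^{i}\gamma^{W-(j-t)}\le\gamma^{W-(i-t)}/(1-\gamma)$, obtaining $\|\Phi(i+1,t+1)-\lbar{\Phi}_t(i+1,t+1)\|\le C_3\rho^{i-t}\gamma^{W-(i-t)}$. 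Summing the three pieces yields $\|K_t^{d,i}-\lbar{K}_t^{d,i}\|\le\alpha_4\gamma^{W-i+t}\rho^{i-t}$.

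I expect the main obstacle to be bookkeeping rather than any single idea: pushing the $\delta_\infty$-contraction through to operator-norm bounds on $P$, then on the gains $K_j$, then through the matrix-product telescoping for $\Phi$, while keeping the exponent $W-(i-t)$ sharp at each stage and certifying that every intermediate constant is truly $W$-free. The last point relies on $\gamma<1$, so that $\gamma^{W-(i-t)}\le1$ and the nonlinear conversion factor $e^{\gamma^{W-(i-t)}D}\le e^{D}$ never grows with $W$.
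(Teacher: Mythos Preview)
Your plan is correct and shares the same backbone as the paper: use the $\delta_\infty$-contraction of the Riccati map to get $\|P_{j+1}-\lbar{P}_{j+1|t}\|\le C_1\gamma^{W-(j-t)}$, then push this through to the gains. The organizational difference is in how you handle $K_t^{d,i}-\lbar{K}_t^{d,i}$. You decompose $K_t^{d,i}$ as a three-factor product (inverse, $\Phi^\top$, $P$), bound each factor's perturbation separately, and treat the $\Phi$-difference by a telescoping sum over the per-step gain differences $K_j-\lbar{K}_{j|t}$. The paper instead bundles the last two factors into a single object $X_t^{d,s}:=\Phi(s+1,t+1)^\top P_{s+1}$ (and its barred analogue), observes the exact factorization $\lbar{K}_{r|t}-K_r=(R_r+B_u^\top\lbar{P}_{r+1|t}B_u)^{-1}B_u^\top(\lbar{P}_{r+1|t}-P_{r+1})(A-B_uK_r)$, and uses it so that the trailing $(A-B_uK_r)$ merges with $\Phi(r,t+1)$ to become $\Phi(r+1,t+1)$; this yields a single recursion for $\lbar{X}_{t|t}^{d,s}-X_t^{d,s}$ directly in terms of $P$-differences, bypassing any separate bound on $\Phi-\lbar{\Phi}_t$. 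Your route is more modular and arguably easier to follow; the paper's route is one layer shorter and is what produces the precise constants $\alpha_3,\alpha_4$ stated in the lemma. Your argument will give the same $\gamma^{W-i+t}\rho^{i-t}$ scaling, but because of the extra telescoping layer (you pick up an additional $\tau^2\|B_u\|\alpha_3/(1-\gamma)$ from piece~(ii)) your constants will be somewhat larger than the ones quoted. If matching the exact $\alpha_3,\alpha_4$ matters, you would need to adopt the $X$-variable trick; if only the $W$-free constant and the exponent are the point, your approach suffices.
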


The key step in the proof of the Lemma \ref{lemma:bound-K_t^i-difference} is to bound $\|\lbar{P}_{i+1|t} - P_{i+1}\|$ which is provided below
\begin{lemma}\label{lemma:bound-P_t-difference}
\begin{equation*}
   \|\lbar{P}_{i+1|t} - P_{i+1}\| \le \gamma^{t+W-i}~\frac{\lambda_{\max}(P_{\max})^2}{\lambda_{\min}(Q_{\min})}, \quad  t\le i\le t+W.
\end{equation*}
\end{lemma}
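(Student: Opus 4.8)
The plan is to write both $\lbar{P}_{i+1|t}$ and $P_{i+1}$ as the images of two \emph{different terminal matrices} under the \emph{same} composition of $t+W-i$ Riccati operators, and then invoke the $\delta_\infty$-contractivity \eqref{eq:ricatti-contraction}. Unrolling the recursion \eqref{eq:iterative-Riccati} from step $i+1$ down to step $t+W$ gives
\[
 P_{i+1}=\big(F_{Q_{i+1},R_{i+1}}\circ\cdots\circ F_{Q_{t+W},R_{t+W}}\big)(P_{t+W+1}),
\]
whereas the definition \eqref{eq:bar-K-P} of $\lbar{P}_{t+\tau|t}$ reads, for $\tau=i+1-t$, exactly as
\[
 \lbar{P}_{i+1|t}=\big(F_{Q_{i+1},R_{i+1}}\circ\cdots\circ F_{Q_{t+W},R_{t+W}}\big)(P_{\max}),
\]
i.e.\ the same $t+W-i$ maps applied to $P_{\max}$ instead of $P_{t+W+1}$ (when $i=t+W$ the composition is empty and $\lbar{P}_{t+W+1|t}=P_{\max}$). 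Applying \eqref{eq:ricatti-contraction} once for each of the $t+W-i$ operators then yields
\[
 \delta_\infty\big(\lbar{P}_{i+1|t},\,P_{i+1}\big)\ \le\ \gamma^{\,t+W-i}\,\delta_\infty\big(P_{\max},\,P_{t+W+1}\big).
\]

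\noindent Next I would bound the terminal discrepancy. On one hand $P_{t+W+1}\preceq P_{\max}$: since $F_{Q_{\max},R_{\max}}(P)\succeq Q_{\max}$ for every $P\succeq0$, the DARE solution \eqref{eq:P_max} satisfies $P_{\max}\succeq Q_{\max}\succeq Q_T=P_T$, and a backward induction using monotonicity of $F_{Q,R}(P)$ in $P$, in $Q$, and reverse-monotonicity in $R$ (together with Assumption~\ref{assump:bounded-S-R}) gives $P_\tau\preceq P_{\max}$ for all $\tau$, in particular $\tau=t+W+1$; the same bound also gives $\|P_{i+1}\|\le\lambda_{\max}(P_{\max})$, which I will need below. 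On the other hand $P_{t+W+1}\succeq Q_{\min}$, because the optimal cost-to-go from step $t+W+1$ is at least the stage cost $x^\top Q_{t+W+1}x\ge x^\top Q_{\min}x$ incurred at that step (and $P_T=Q_T\succeq Q_{\min}$ if $t+W+1=T$). Hence the eigenvalues of $P_{\max}^{-1/2}P_{t+W+1}P_{\max}^{-1/2}$ lie in $[\lambda_{\min}(Q_{\min})/\lambda_{\max}(P_{\max}),\,1]$, so $\delta_\infty(P_{\max},P_{t+W+1})\le \log\!\big(\lambda_{\max}(P_{\max})/\lambda_{\min}(Q_{\min})\big)=:c$.

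\noindent Finally I would convert the $\delta_\infty$-bound back to an operator-norm bound. If $\delta_\infty(P,\lbar P)=\delta$ the eigenvalues of $P^{-1/2}\lbar P P^{-1/2}$ lie in $[e^{-\delta},e^{\delta}]$, so $\|P-\lbar P\|=\|P^{1/2}(P^{-1/2}\lbar P P^{-1/2}-I)P^{1/2}\|\le\|P\|\,(e^{\delta}-1)$. Taking $P=P_{i+1}$ (so $\|P\|\le\lambda_{\max}(P_{\max})$) and $\delta=\gamma^{t+W-i}c\le c$, and using convexity of $s\mapsto e^{s}-1$ on $[0,c]$ (the chord bound $e^{\gamma^{t+W-i}c}-1\le\gamma^{t+W-i}(e^{c}-1)$) together with $e^{c}-1\le e^{c}=\lambda_{\max}(P_{\max})/\lambda_{\min}(Q_{\min})$, I obtain
\[
 \|\lbar{P}_{i+1|t}-P_{i+1}\|\ \le\ \lambda_{\max}(P_{\max})\cdot\gamma^{\,t+W-i}\,\frac{\lambda_{\max}(P_{\max})}{\lambda_{\min}(Q_{\min})}\ =\ \gamma^{\,t+W-i}\,\frac{\lambda_{\max}(P_{\max})^2}{\lambda_{\min}(Q_{\min})},
\]
which is exactly the stated bound.

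\noindent The steps that require care are: (i) the sandwich $Q_{\min}\preceq P_{t+W+1}\preceq P_{\max}$, which rests on the standard but slightly delicate monotonicity properties of the Riccati map and on $P_{\max}\succeq Q_{\max}$; and (ii) the last conversion, where the naive inequality $e^{\delta}-1\le\delta$ is false, so one must use the convexity/chord bound $e^{\gamma^{k}c}-1\le\gamma^{k}(e^{c}-1)$ to avoid a spurious $\log$ factor and land precisely on $\lambda_{\max}(P_{\max})^2/\lambda_{\min}(Q_{\min})$. I expect (i) to be the main obstacle in making the write-up fully rigorous, since it is where Assumptions~\ref{assump:stabilizability}--\ref{assump:bounded-S-R} and the properties of the DARE solution $P_{\max}$ all come together.
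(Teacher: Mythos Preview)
Your proposal is correct and follows essentially the same route as the paper. The paper derives Lemma~\ref{lemma:bound-P_t-difference} as a corollary of Proposition~\ref{prop:exponential-convergence-value-function}, whose proof (i) iterates the $\delta_\infty$-contraction of the Riccati map $t+W-i$ times, (ii) bounds the terminal distance $\delta_\infty(P_{\max},P_{t+W+1})$ via the sandwich $Q_{\min}\preceq P_{t+W+1}\preceq P_{\max}$ (Proposition~\ref{prop:bounded-value-function} and Lemma~\ref{lemma:invariant-metric-1}), and (iii) converts $\delta_\infty$ back to operator norm via Lemma~\ref{lemma:invariant-metric-2}, which is exactly your chord bound $e^{\gamma^{k}c}-1\le\gamma^{k}(e^{c}-1)\le\gamma^{k}e^{c}$ in disguise. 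One small slip in your step~(i): the Riccati map $F_{Q,R}(P)$ is \emph{increasing} in $R$ (larger control penalty raises the optimal cost-to-go), not reverse-monotone, so in your backward induction all three monotonicities point the same way; this does not affect the conclusion $P_\tau\preceq P_{\max}$.
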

Lemma \ref{lemma:bound-P_t-difference} is a direct corollary from \eqref{eq:ricatti-contraction}. Note that
\begin{equation*}
    P_{i} = F_{Q_{i}, R_{i}}(P_{i+1}), ~
    \lbar{P}_{i|t} = F_{Q_{i},R_{i}}(\lbar{P}_{i+1|t})
\end{equation*}
Applying \eqref{eq:ricatti-contraction} immediately gives us:
\begin{align*}
    \delta_{\infty}(\lbar{P}_{i|t}, P_{i}) &\le \gamma \delta_{\infty}(\lbar{P}_{i+1|t}, P_{i+1})
    \le \cdots \\&\le \gamma^{t+W+1-i} \delta_{\infty}(\lbar{P}_{t+W+1|t}, P_{t+W+1}) 
\end{align*}
which could be used to derive Lemma~\ref{lemma:bound-P_t-difference}. 


Combining all 4 steps together will give the upper bound in Theorem \ref{thm:regret-analysis-main}. The rigorous  proof and computation are quite cumbersome and we refer readers to the Appendix of our online report \cite{online_report} for more details.
\section{Numerical Simulations}
To numerically test 
\begin{align*}
    A = \left[
    \begin{array}{cc}
        0 & 1 \\
        1 & 0
    \end{array}
    \right], ~B_u = \left[
    \begin{array}{c}
    0\\
    1
    \end{array}
    \right], ~B_d = \left[
    \begin{array}{c}
    0\\
    1
    \end{array}
    \right]
\end{align*}
The stage cost function $Q_t, R_t$ are randomly picked as
\begin{equation*}
    Q_t = q_t I, ~ R_t = r_t
\end{equation*}
where $q_t$'s are picked randomly from Unif[2,3], and $r_t$'s are picked from Unif[5,6]. Disturbances $d_t$ are drawn randomly from standard Gaussian distribution, $d_t \sim \mathcal{N}(0,1)$. We consider the following two settings:

\vspace{4pt}
\noindent\textbf{Accurate Prediction on $d_t$:} The numerical result for accurate prediction case is shown in Figure \ref{fig:numerics} (Left). It displays the relationship of dynamical regret and prediction window size $W$. According to \eqref{eq:regret}, when setting prediction errors $e_{i|j} = 0$, the regret should be exponentially decreasing w.r.t. $W$, which matches the simulation result.

\vspace{4pt}
\noindent\textbf{Noisy Prediction on $d_t$:} The numerical result is shown in Figure \ref{fig:numerics} (Right). For this setting, we set the prediction errors as $e_{i|j} \sim \text{snr}*N(0,1)$, where snr denotes the signal-to-noise ratio of the predictions. Smaller snr indicates more accurate predictions. The figure suggests that for more accurate predictions, choosing a fairly large prediction window will boost the performance of the controller; while for large snr values, a longer prediction does not benefit too much, because the prediction error term is the dominant term of the regret.

\begin{figure}
\includegraphics[width = .5\linewidth]{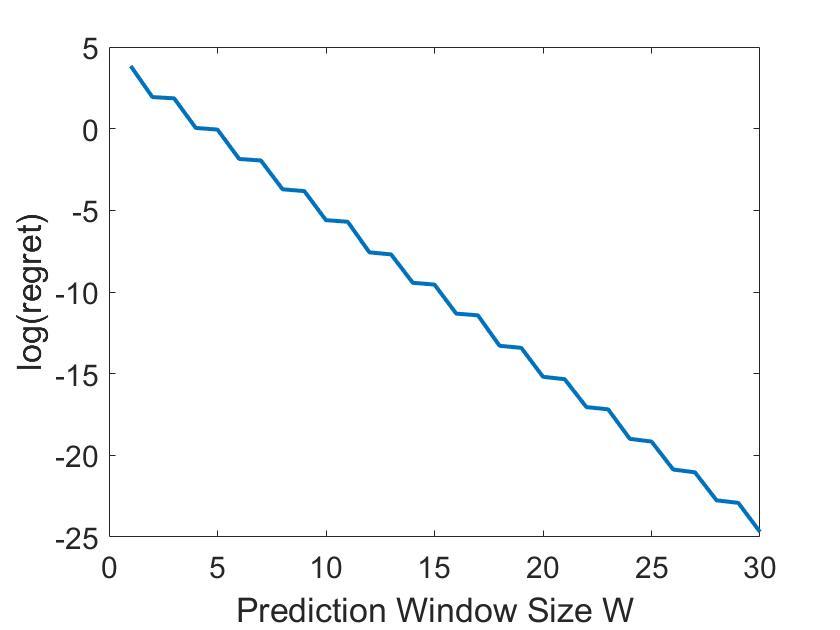}\includegraphics[width = .5\linewidth]{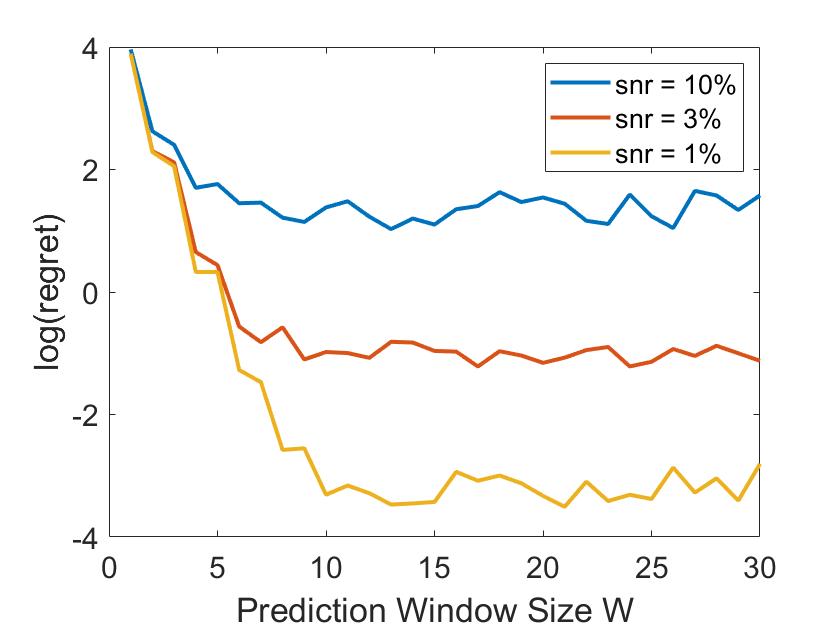}
\caption{(Left): Regret for accurate prediction on $d_t$; (Right): Regret for accurate prediction on $d_t$, 'snr' represents the signal-to-noise ratio of the predictions.}
\label{fig:numerics}
\end{figure}

\section{Conclusion}
This paper studies the role of predictions on dynamic regrets of online linear quadratic regulator with time-varying cost functions and disturbances. Besides giving an explicit dynamic regret upper bound for model predictive control algorithm in this setting, we also proposed a regret analysis framework based on `cost difference lemma' that might be applicable for more general class of control algorithms than MPC. This paper leads to many interesting future directions, some of which are briefly discussed below. The first direction is to further allow noisy predictions on $Q_t, R_t$. The second direction is to consider time varying system parameters $A_t, B_{u_t}, B_{d_t}$, e.g. \cite{grune2017,grune2020}. Moreover, it might be interesting to consider the setting where system parameters are unknown, possibly by applying learning based control tools, e.g., \cite{dean2018regret,Krauth19,ouyang2017learning}.

\bibliographystyle{IEEEtran}

\appendix


\subsection{Proof of Proposition \ref{prop:optimal-policy} and Lemma \ref{lemma:regret-formula}} \label{apdx:cost-difference-lemma}

We first define the optimal cost-to-go function:
\begin{equation*}
\begin{aligned}
    &\quad V_t^*(x):=\\
    &\min_{\{\!u_t,\dots,u_{T\!-1}\!\}} \left[\sum_{k=t}^{T-1}(x_k^\top Q_k x_k \!+\! u_k^\top R_ku_k) \!+\! x_T^\top Q_Tx_T ~|x_t = x\right]\\
    &\qquad\qquad\qquad s.t. \quad x_{k+1} = Ax_k + B_uu_k + B_dd_k
\end{aligned}
\end{equation*}
For simplicity we further define the following variables:
\begin{align*}
    \cA^{(n)}:= \left[
    \begin{array}{c}
         I  \\
         A  \\
         \vdots \\
         A^{n-1}
    \end{array}
    \right],~y_t^{t:T}\!(x) \!:=\! \left[\!
    \begin{array}{c}
         y_t^1 \\
         y_t^2  \\
         \vdots \\
         y_t^{T\!-t\!+1}
    \end{array}
    \!\right],
\end{align*}
where $y_t^{t:T}\!\in \mathbb{R}^{(T\!-t\!+1)n}$ ,and the recursive relationship between $y_t^i$ satisfies:
\begin{align*}
    y_t^1 &= x,\\
    y_t^{i+1} &= A y_t^i + B_dd_{t+i-1},~ i=1,2,\dots, T-t.
\end{align*}
Further define:
\vspace{-6pt}
\begin{equation*}
    y_t^{t+1:T}(x) : = \left[
    \begin{array}{c}
         y_t^2  \\
         \vdots \\
         y_t^{T-t+1}
    \end{array}
    \right].
\end{equation*}

By the definition of $\cA^{(n)}, ~ y_t^{t:T}, ~y_t^{t+1:T} $, we have that:
\begin{align*}
y_{t+1}^{t+1:T}(Ax+B_uu+B_dd_t) = y_t^{t+1:T}(x) + \cA^{(T-t)}B_u u.
\end{align*}
Additionally, by the definition of $V_t^*$, we have the following Bellman equation:
\begin{equation}\label{eq:Bellman-eq-with-disturbance}
\begin{split}
     V_t^*(x) = \min_u x^\top Q_tx \!+\! u^\top R_tu
     \!+\! V_{t+1}^*(Ax\!+\!B_uu\!+\!B_dd_t)
\end{split}
\end{equation}

The following proposition gives an explicit form for $V_t^*$.
\begin{prop}\label{prop:value-function-form}
$V_t^*(x)$ can be written as:
\begin{equation*}
\begin{split}
     & V_t^*(x) =\quad y_t^{t:T}(x)^\top V_t y_t^{t:T}(x),
\end{split}
\end{equation*}
where $V_t \in \mathbb{R}^{(T-t+1)n\times(T-t+1)n}$ is defined by the following recursive relationship:
\begin{small}
\begin{equation}\label{eq:Value-function-form}
\begin{split}
    &V_T = Q_T\\
    &V_t =\left[
    \begin{array}{cc}
        Q_t & 0 \\
        0 & f_{t}(V_{t+1})
    \end{array}
    \right],~ t = T-1, T-2, \dots, 1.
\end{split}
\end{equation}
\end{small}
where
\begin{small}
\begin{equation*}
\begin{split}
    &\quad f_{t}(X) = \\
    & X - X\cA^{(T\!-\!t)}B_u(R_t+B_u^\top\cA^{(T\!-\!t)^\top} X \cA^{(T\!-\!t)} B_u)^{\!-\!1}B_u^\top\cA^{(T\!-\!t)^\top} X.
\end{split}
\end{equation*}
\end{small}
The optimal policy is given by:
\begin{equation}\label{eq:optimal-controller}
    u_t^* = -G_t y_t^{t+1:T}(x),
\end{equation}
where 
\vspace{-6pt}
$$G_t:=(R_t\!+\!B_u^\top\cA^{(T\!-\!t)^\top} V_{t+1} \cA^{(T\!-\!t)} B_u)^{\!-\!1} B_u^\top\cA^{(T\!-\!t)^\top} V_{t+1}$$ 
\end{prop}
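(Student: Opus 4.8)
The plan is backward induction on $t$, with induction hypothesis that $V_t^*(x) = y_t^{t:T}(x)^\top V_t\, y_t^{t:T}(x)$ for all $x$, where $V_t$ is given by \eqref{eq:Value-function-form} and $V_t\succeq 0$; the policy formula \eqref{eq:optimal-controller} will fall out of the inductive step at $t$. The two ingredients are the Bellman recursion \eqref{eq:Bellman-eq-with-disturbance} and the two structural facts recorded just above it: that $y_t^{t:T}(x)$ is the stack of $x$ on top of $y_t^{t+1:T}(x)$, and that $y_{t+1}^{t+1:T}(Ax + B_uu + B_dd_t) = y_t^{t+1:T}(x) + \cA^{(T-t)}B_u u$.

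For the base case $t = T$, we have $y_T^{T:T}(x) = x$, so $V_T^*(x) = x^\top Q_T x$ is of the claimed form with $V_T = Q_T \succ 0$. For the inductive step, fix $t \le T-1$ and assume the hypothesis at $t+1$. Writing $w := y_t^{t+1:T}(x)$ and $M := \cA^{(T-t)}B_u$, substitute $z = Ax + B_uu + B_dd_t$ into $V_{t+1}^*(z) = y_{t+1}^{t+1:T}(z)^\top V_{t+1} y_{t+1}^{t+1:T}(z)$ and apply the recursion, so that the expression inside the $\min$ in \eqref{eq:Bellman-eq-with-disturbance} equals
\[ x^\top Q_t x + u^\top R_t u + (w + Mu)^\top V_{t+1}(w + Mu). \]
This is a quadratic in $u$ whose Hessian $R_t + M^\top V_{t+1}M$ satisfies $R_t + M^\top V_{t+1}M \succeq R_{\min} \succ 0$ by Assumption~\ref{assump:bounded-S-R} and $V_{t+1}\succeq 0$, so it has the unique minimizer $u_t^* = -(R_t + M^\top V_{t+1}M)^{-1}M^\top V_{t+1}w = -G_t\, y_t^{t+1:T}(x)$, matching \eqref{eq:optimal-controller}, with optimal value $x^\top Q_t x + w^\top f_t(V_{t+1})\, w$, where $f_t$ is exactly the Schur-complement map in the statement. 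Using the decomposition of $y_t^{t:T}(x)$ and the block-diagonal form \eqref{eq:Value-function-form} of $V_t$, this optimal value is $y_t^{t:T}(x)^\top V_t\, y_t^{t:T}(x)$, closing the induction.

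To keep the hypothesis $V_{t+1}\succeq 0$ alive, I would also check that $f_t$ maps the PSD cone into itself: for $X\succeq 0$ the symmetric $2\times 2$ block matrix with blocks $X$, $XM$, $M^\top X$, $R_t + M^\top XM$ can be written as $\begin{bmatrix} I \\ M^\top \end{bmatrix} X \begin{bmatrix} I & M \end{bmatrix} + \begin{bmatrix} 0 & 0 \\ 0 & R_t \end{bmatrix} \succeq 0$, so its Schur complement $f_t(X) = X - XM(R_t + M^\top XM)^{-1}M^\top X$ is PSD; together with $Q_t \succ 0$ this makes every $V_t$ from \eqref{eq:Value-function-form} PSD and guarantees the inverses above exist. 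I do not expect a genuine obstacle, since the content is routine linear algebra; the one place that rewards care is the bookkeeping of block dimensions and indices — that $y_t^{t+1:T}(x)$ and $\cA^{(T-t)}$ each consist of $T-t$ blocks of size $n$, and that the split of $y_t^{t:T}(x)$ into its first block and its tail is precisely the partition matched by the $Q_t$ block and the $f_t(V_{t+1})$ block of $V_t$.
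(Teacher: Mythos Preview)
Your proposal is correct and follows essentially the same approach as the paper: backward induction using the Bellman equation \eqref{eq:Bellman-eq-with-disturbance}, the identity $y_{t+1}^{t+1:T}(Ax+B_uu+B_dd_t) = y_t^{t+1:T}(x) + \cA^{(T-t)}B_u u$, and minimization of the resulting quadratic in $u$. Your additional verification that $f_t$ preserves positive semidefiniteness (hence that $R_t + M^\top V_{t+1}M$ is invertible) is a point the paper leaves implicit, so your write-up is slightly more careful but not materially different.
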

\begin{proof}
We use proof of induction. It is quite obvious that $V_T^*(x) = x^\top Q_Tx$.
Suppose proposition holds for time step $t+1$, i.e.
\begin{equation*}
\begin{split}
    V_{t+1}^*(x) = y_{t+1}^{t+1:T}(x)^\top V_{t+1} y_{t+1}^{t+1:T}(x).
\end{split}
\end{equation*}
Here for the sake of simplicity we write $y_t^{t:T}(x;d_t,\dots,d_{T-1})$ as $y_t^{t:T}(x)$.

By Bellman equation \eqref{eq:Bellman-eq-with-disturbance} we know that
\begin{align*}
    & V_t^*(x) \!=\! \min_u x^\top Q_tx \!+\! u^\top R_tu \!+\! V_{t+1}^*(Ax\!+\!B_uu\!+\!B_dd_t)\\
    &= \min_u x^\top Q_tx + u^\top R_tu \\&+ y_{t+1}^{t+1:T}(Ax\!+\!B_uu\!+\!B_dd_t)^\top V_{t+1}y_{t+1}^{t+1:T}(Ax\!+\!B_uu\!+\!B_dd_t)\\
    & = \min_u x^\top Q_tx + u^\top R_tu\\ &+
    (y_t^{t+1:T}(x) \!+\! \cA^{(T-t)}B_u u)^\top V_{t+1} (y_t^{t+1:T}(x) \!+\! \cA^{(T-t)}B_u u).
\end{align*}
Minimizing over $u$ we get that:
\begin{equation}\label{eq:u-star}
\begin{split}
        &\qquad u^* =\\ 
        &-(R_t\!+\!B_u^\top\!\cA^{(\!T\!-\!t\!)^\top} V_{t+1} \cA^{(\!T\!-\!t\!)} B_u)^{\!-\!1}
        B_u^\top\cA^{(\!T\!-\!t\!))^\top} V_{t+1} y_t^{t\!+\!1:\!T}(x)
\end{split}
\end{equation}

Substituting \eqref{eq:u-star} into the Bellman equation we get:
\begin{align*}
    V_t(x) &= x^\top Q_t x+y_t^{t+1:T}(x)^\top f_t(V_{t+1})y_t^{t+1:T}(x)\\
    &= y_t^{t:T}(x)^\top \left[
    \begin{array}{cc}
        Q_t & 0 \\
        0 & f_t(V_{t+1})
    \end{array}
    \right] y_t^{t+1:T},
\end{align*}
which completes the proof.
\end{proof}

The next proposition shows that Proposition \ref{prop:value-function-form} is consistent with classical LQR results.

\begin{prop}\label{prop:consistency-with-LQR}{(Consistency with Standard LQR)}
Define $P_t \in \mathbb{R}^{n\times n}$ as:
\vspace{-6pt}
\begin{equation}\label{eq:defi-tilde-P}
P_{t} := \cA^{(T-t+1)^\top} V_t \cA^{(T-t+1)}.
\end{equation}
Then we have:
\begin{small}
\begin{equation*}
    P_t = Q_t + A^\top P_{t+1} A - A^\top P_{t+1} B_u (R_t + B_u^\top P_{t+1}B_u)^{-1}B_u^\top P_{t+1}A,
\end{equation*}
\end{small}
which is the same matrix as the value function matrix for LQR defined in \eqref{eq:iterative-Riccati}.
\end{prop}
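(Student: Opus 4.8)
The plan is to verify the claimed Riccati recursion by a direct block computation, feeding the recursive formula \eqref{eq:Value-function-form} for $V_t$ into the definition \eqref{eq:defi-tilde-P} of $P_t$. The one structural fact I need first is a factorization of the stacked matrix: since $\cA^{(T-t)}A = [A^\top, (A^2)^\top, \dots, (A^{T-t})^\top]^\top$ (pull $A$ out on the right of every block of $\cA^{(T-t)}$), we get
\[
\cA^{(T-t+1)} = \begin{bmatrix} I \\ \cA^{(T-t)}A \end{bmatrix},
\qquad\text{hence}\qquad
\cA^{(T-t+1)^\top} = \begin{bmatrix} I & A^\top\cA^{(T-t)^\top} \end{bmatrix}.
\]

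Next I would substitute the block form $V_t = \mathrm{diag}(Q_t, f_t(V_{t+1}))$ from \eqref{eq:Value-function-form} together with this factorization into \eqref{eq:defi-tilde-P}. Expanding the $2\times 2$ block product kills the off-diagonal zeros and gives
\[
P_t = Q_t + A^\top\cA^{(T-t)^\top} f_t(V_{t+1})\, \cA^{(T-t)} A .
\]
Now I would expand $f_t(V_{t+1})$ by its definition in Proposition~\ref{prop:value-function-form} and conjugate by $\cA^{(T-t)}$. Every factor of the form $\cA^{(T-t)^\top}V_{t+1}$ or $V_{t+1}\cA^{(T-t)}$ inside $f_t$ is already adjacent to the matching companion, so each occurrence of $\cA^{(T-t)^\top}V_{t+1}\cA^{(T-t)}$ collapses to a single matrix; but by the definition \eqref{eq:defi-tilde-P} applied at time $t+1$ that matrix is exactly $P_{t+1}=\cA^{(T-t)^\top}V_{t+1}\cA^{(T-t)}$. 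This yields
\[
\cA^{(T-t)^\top} f_t(V_{t+1})\, \cA^{(T-t)}
 = P_{t+1} - P_{t+1}B_u\bigl(R_t + B_u^\top P_{t+1}B_u\bigr)^{-1}B_u^\top P_{t+1},
\]
and conjugating by $A$ and adding $Q_t$ recovers precisely the recursion of \eqref{eq:iterative-Riccati}. The base case is immediate: $P_T = \cA^{(1)^\top}V_T\cA^{(1)} = Q_T$ since $\cA^{(1)}=I$ and $V_T=Q_T$.

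This argument is almost entirely bookkeeping; the only place that needs any care is lining up indices and sides so that the block $f_t(V_{t+1})$ appearing inside $V_t$ is conjugated by the \emph{same} $\cA^{(T-t)}$ that sits inside its own definition, and so that $A$ multiplies on the correct side in the factorization of $\cA^{(T-t+1)}$. Once those are matched, the inner Schur-complement-type identity defining $f_t$ collapses directly onto the $(R_t + B_u^\top P_{t+1}B_u)^{-1}$ term of the discrete Riccati map with no further algebra, so I do not anticipate a genuine obstacle here.
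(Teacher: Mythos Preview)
Your proposal is correct and is exactly the ``simple algebraic manipulation'' the paper alludes to when it omits this proof: the block factorization $\cA^{(T-t+1)} = \begin{bmatrix} I \\ \cA^{(T-t)}A \end{bmatrix}$ together with the block-diagonal form of $V_t$ immediately reduces $P_t$ to $Q_t + A^\top\cA^{(T-t)^\top} f_t(V_{t+1})\cA^{(T-t)}A$, and every occurrence of $\cA^{(T-t)^\top}V_{t+1}\cA^{(T-t)}$ in the conjugated $f_t$ is precisely $P_{t+1}$. There is nothing to add.
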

We omit the proof for Proposition \ref{prop:consistency-with-LQR}, since it can be done by simple algebraic manipulation.

We are now ready to prove Proposition \ref{prop:optimal-policy}. Though there are some very similar results in literature, we'll give our own proof for the optimal offline controller. Note that we have already shown in Proposition \ref{prop:value-function-form} that the optimal policy is given by \eqref{eq:optimal-controller}, thus we only need to show that \eqref{eq:optimal-policy} and \eqref{eq:optimal-controller} are equivalent.

The proof of Proposition \ref{prop:optimal-policy} depends on the following lemma:
\begin{lemma}\label{prop:Y-form}
Define $Y_t\in \mathbb{R}^{(T-t+1)n\times n}$ as $Y_T := V_t \cA^{(T-t+1)}$, then $Y_t$ has an explicit form:
\begin{equation*}
    Y_t = \left[
    \begin{array}{c}
         Q_t  \\
         Q_{t+1}(A-B_uK_t)\\
         Q_{t+2}(A-B_uK_{t+1})(A-B_uK_{t})\\
         \vdots\\
         Q_T(A-B_uK_{T-1})\cdots(A-B_uK_t)
    \end{array}
    \right],
\end{equation*}
where $K_t$ is the optimal control gain for standard LQR problem as defined in \eqref{eq:K_t-LQR}.
\end{lemma}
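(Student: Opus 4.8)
The plan is to prove the lemma by backward induction on $t$, in lockstep with the recursion \eqref{eq:Value-function-form} that defines $V_t$. (Note that the display in the statement should read $Y_t := V_t\cA^{(T-t+1)}$; the subscript $T$ there is a typo.) The base case $t=T$ is immediate: $V_T = Q_T$ and $\cA^{(1)} = I$, so $Y_T = Q_T$, which is the one-block instance of the claimed formula.

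For the inductive step, suppose the formula holds for $Y_{t+1} = V_{t+1}\cA^{(T-t)}$. The first move is to split off the top block of $\cA^{(T-t+1)}$, writing $\cA^{(T-t+1)} = \begin{bmatrix} I \\ \cA^{(T-t)}A\end{bmatrix}$, and then use the block-diagonal form of $V_t$ from \eqref{eq:Value-function-form} to obtain $Y_t = \begin{bmatrix} Q_t \\ f_t(V_{t+1})\,\cA^{(T-t)}A\end{bmatrix}$. The top block $Q_t$ already matches, so the whole lemma reduces to the single identity
\[ f_t(V_{t+1})\,\cA^{(T-t)}A \;=\; Y_{t+1}\,(A-B_uK_t), \]
since right-multiplying the inductive expression for $Y_{t+1}$ by $(A-B_uK_t)$ produces exactly blocks $2$ through $T-t+1$ of the claimed $Y_t$.

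To establish this identity, I would expand the definition of $f_t$ and invoke Proposition~\ref{prop:consistency-with-LQR} in the form $\cA^{(T-t)\top}V_{t+1}\cA^{(T-t)} = P_{t+1}$. This turns the inner matrix $R_t + B_u^\top\cA^{(T-t)\top}V_{t+1}\cA^{(T-t)}B_u$ into $R_t + B_u^\top P_{t+1}B_u$ and the trailing factor $B_u^\top\cA^{(T-t)\top}V_{t+1}\cA^{(T-t)}A$ into $B_u^\top P_{t+1}A$, after which everything factors through $V_{t+1}\cA^{(T-t)} = Y_{t+1}$ and leaves the state-dependent factor $A - B_u(R_t+B_u^\top P_{t+1}B_u)^{-1}B_u^\top P_{t+1}A$, which is precisely $A-B_uK_t$ by \eqref{eq:K_t-LQR}. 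This closes the induction.

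The proof involves no genuine difficulty — it is essentially block-matrix bookkeeping driven by the recursion \eqref{eq:Value-function-form}. The one place that needs a small observation is the collapse of the Schur-complement-type term inside $f_t$ after right-multiplication by $\cA^{(T-t)}A$: one has to see that it reduces to the single closed-loop factor $A-B_uK_t$, and this is exactly where Proposition~\ref{prop:consistency-with-LQR}, identifying $\cA^{(T-t)\top}V_{t+1}\cA^{(T-t)}$ with the Riccati matrix $P_{t+1}$, does the work. Keeping the dimension bookkeeping straight — so that the $j$-th block of $Y_t$ ends up as $Q_{t+j-1}\Phi(t+j-1,t)$ with the state-transition matrix $\Phi$ — is the only other thing to watch.
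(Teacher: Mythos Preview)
Your proposal is correct and follows essentially the same route as the paper: the paper also derives the one-step recursion $Y_t = \begin{bmatrix} Q_t \\ Y_{t+1}(A-B_uK_t)\end{bmatrix}$ from \eqref{eq:Value-function-form} together with Proposition~\ref{prop:consistency-with-LQR}, and then unrolls it. Your write-up makes the algebra behind the identity $f_t(V_{t+1})\cA^{(T-t)}A = Y_{t+1}(A-B_uK_t)$ more explicit than the paper does, but the argument is the same.
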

\begin{proof}
From eq\eqref{eq:Value-function-form} and Proposition \ref{prop:consistency-with-LQR}, with some simple algebraic computation we can derive the recursive relationship between $Y_t$:
\begin{equation}\label{eq:relationship-Y}
\begin{split}
    Y_t &= \left[
    \begin{array}{c}
         Q_t  \\
         Y_{t+1}A-Y_{t+1}B_u(R_t+B_u^\top P_{t+1}B_u)^{-1}B_u^\top P_{t+1}A 
    \end{array}
    \right]\\ &= \left[
    \begin{array}{c}
         Q_t  \\
         Y_{t+1}(A - B_uK_t)
    \end{array}
    \right]
\end{split}
\end{equation}
Applying this relationship recursively we can get that:
\begin{equation*}
    Y_t = \left[
    \begin{array}{c}
         Q_t  \\
         Q_{t+1}(A-B_uK_t)\\
         Q_{t+2}(A-B_uK_{t+1})(A-B_uK_{t})\\
         \vdots\\
         Q_T(A-B_uK_{T-1})\cdots(A-B_uK_t)
    \end{array}
    \right],
\end{equation*}
which completes the proof.
\end{proof}

\begin{proof}{(Proposition \ref{prop:optimal-policy})}
According to \eqref{eq:optimal-controller},
\begin{align*}
    u_t &= -G_t y_t^{t+1:T}(x_t)\\
    &=-(R_t+B_u^\top P_{t+1} B_u)^{-1}B_u^\top\cA^{(T-t)^\top} V_{t+1}y_t^{t+1:T}(x_t)\\
    & = -(R_t+B_u^\top P_{t+1} B_u)^{-1}B_u^\top Y_{t+1}^\top y_t^{t+1:T}(x_t)\\
    & =  -(R_t+B_u^\top P_{t+1} B_u)^{-1}B_u^\top Y_{t+1}^\top \left(\cA^{(T-t)}Ax_t\right.\\& + \cA^{(T-t)}B_dd_t + \left[
    \begin{array}{c}
         0  \\
         \cA^{(T-t-1)} 
    \end{array}
    \right]B_dd_{t+1}\\
    &\qquad\left.
    +~\cdots~+\left[
    \begin{array}{c}
         0  \\
         \vdots\\
         0\\
         I
    \end{array}
    \right]B_d d_{T-1}
    \right)
\end{align*}
Since $Y_{t+1} = V_{t+1}\cA^{(T-t)}$, we have that:
$$Y_{t+1}^\top\cA^{(T-t)} = P_{t+1}$$
Additionally, from \eqref{eq:relationship-Y} we have:
\begin{align*}
    &Y_{t+1}^\top\left[
    \begin{array}{c}
         0  \\
         \cA^{(T-t-1)} 
    \end{array}
    \right] \\=& \left[Q_{t+1},  (A-B_u K_{t+1})^\top Y_{t+2}^\top\right]\left[
    \begin{array}{c}
         0  \\
         \cA^{(T-t-1)} 
    \end{array}
    \right] \\=& (A - B_uK_{t+1})^\top P_{t+2}
\end{align*}
Similarly,
\begin{equation*}
\begin{split}
   &\quad Y_{t+1}^\top\left[
    \begin{array}{c}
         0  \\
         \vdots\\
         0\\
         \cA^{(T-t-k)}
    \end{array}
    \right]\\& = (A-B_uK_{t+1})^\top\cdots(A-B_uK_{t+k})^\top P_{t+k+1}\\
    &= \Phi(t+k+1,t+1)^\top P_{t+k+1}
    \end{split}
\end{equation*}
Thus we have that:
\begin{equation*}
\begin{split}
    u_t & =  -(R_t+B_u^\top P_{t+1} B_u)^{-1}B_u^\top Y_{t+1}^\top \left(\cA^{(T-t)}Ax_t\right. \\&+ \cA^{(T-t)}B_dd_t + \left[
    \begin{array}{c}
         0  \\
         \cA^{T-t-1} 
    \end{array}
    \right]B_dd_{t+1}\\
    &\qquad\left.
    +~\cdots~+\left[
    \begin{array}{c}
         0  \\
         \vdots\\
         0\\
         I
    \end{array}
    \right]B_d d_{T-1}
    \right)\\
    &= -(R_t+B_u^\top P_{t+1} B_u)^{-1}B_u^\top P_{t+1} Ax_t\\&  -(R_t+B_u^\top P_{t+1} B_u)^{-1}B_u^\top P_{t+1} B_dd_t- \cdots\\ & 
    - (R_t+B_u^\top P_{t+1} B_u)^{-1}B_u^\top \Phi(T,t+1)^\top P_{T}B_dd_{T-1}\\
    &= -K_tx_t - K_t^{d,t}B_dd_t - \cdots - K_t^{d,T-1}B_dd_{T-1}
\end{split}
\end{equation*}
which completes the proof.
\end{proof}

Define the value function for a given policy $\bm \pi$:
\begin{equation*}
\begin{split}
     V_t^{\bm \pi}(x):= \left[\sum_{k=t}^{T-1}(x_k^\top Q_kx_k + u_k^\top R_ku_k) + x_T^\top Q_T x_T\right.\\
     \left.|~x_t = x,\{u_k=\bm \pi_k(I_k)\}_{k=t}^{T-1}\right],
\end{split}
\end{equation*}
where $I_k$ denotes the information available for the controller at time step $k$. Specifically for MPC considered in this paper, $I_k = \left\{x_k, \{d_{i|t}, Q_i, R_i\}_{i=k}^{k+W}\right\}$. 

Similarly we could also define the $Q$-function:
\begin{equation*}
\begin{split}
        \cQ_t^{\bm \pi}(x,u) := \left[\sum_{k=t}^{T-1}(x_k^\top Q_kx_k + u_k^\top R_ku_k) + x_T^\top Q_T x_T\right.\\
        |~x_t = x,\left.u_t = u,\{u_k= \pi_k(I_k)\}_{k=t}^{T-1}\right]
\end{split}
\end{equation*}
We  state a  general version of cost difference lemma.
\begin{lemma}{(Cost Difference Lemma)}\label{lemma:cost-difference}
For two  policies $\bm \pi_1, \bm \pi_2$, the difference of their regrets can be represented by:
\begin{equation}
 V_1^{\bm \pi_2}(x) - V_1^{\bm \pi_1}(x) = \sum_{t=1}^{T-1} \cQ_t^{\bm \pi_1}(x_t^{\bm \pi_2},u_t^{\bm \pi_2}) - V_t^{\bm \pi_1}(x_t^{\bm \pi_2})
\end{equation}
where $\{x_t^{\bm \pi_2}, u_t^{\bm \pi_2}\}$ are trajectory generated by starting at initial state $x$ and imposing policy $\bm \pi_2$.
\end{lemma}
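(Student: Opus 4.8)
The plan is to prove the Cost Difference Lemma (Lemma~\ref{lemma:cost-difference}) by a telescoping argument over the trajectory generated by $\bm\pi_2$, combined with the dynamic-programming recursions for $V_t^{\bm\pi_1}$ and $\cQ_t^{\bm\pi_1}$. First I would record the two defining identities for the comparator policy $\bm\pi_1$: the one-step Bellman-type consistency
\[
\cQ_t^{\bm\pi_1}(x,u) = x^\top Q_t x + u^\top R_t u + V_{t+1}^{\bm\pi_1}(Ax + B_u u + B_d d_t),
\]
and the fact that $\bm\pi_1$ is consistent with its own value function, namely $V_t^{\bm\pi_1}(x) = \cQ_t^{\bm\pi_1}(x, \pi_{1,t}(I_t))$ when the information set $I_t$ is the one induced by starting at $x$. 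I would also need the terminal condition $V_T^{\bm\pi}(x) = x^\top Q_T x$ for both policies, which matches since the stage $T$ cost does not depend on the policy.

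Next, I would fix the $\bm\pi_2$-trajectory $\{x_t^{\bm\pi_2}, u_t^{\bm\pi_2}\}$ starting at $x_1^{\bm\pi_2}=x$ and consider the telescoping sum $\sum_{t=1}^{T-1}\big(V_t^{\bm\pi_1}(x_t^{\bm\pi_2}) - V_{t+1}^{\bm\pi_1}(x_{t+1}^{\bm\pi_2})\big)$, which collapses to $V_1^{\bm\pi_1}(x) - V_T^{\bm\pi_1}(x_T^{\bm\pi_2})$. Separately, by unrolling the actual cost incurred by $\bm\pi_2$ along its own trajectory, $V_1^{\bm\pi_2}(x) = \sum_{t=1}^{T-1}\big(x_t^{\bm\pi_2\,\top}Q_t x_t^{\bm\pi_2} + u_t^{\bm\pi_2\,\top}R_t u_t^{\bm\pi_2}\big) + x_T^{\bm\pi_2\,\top}Q_T x_T^{\bm\pi_2}$. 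The key substitution is that along the $\bm\pi_2$-trajectory, $x_{t+1}^{\bm\pi_2} = A x_t^{\bm\pi_2} + B_u u_t^{\bm\pi_2} + B_d d_t$, so the one-step Bellman identity for $\bm\pi_1$ gives
\[
x_t^{\bm\pi_2\,\top}Q_t x_t^{\bm\pi_2} + u_t^{\bm\pi_2\,\top}R_t u_t^{\bm\pi_2} = \cQ_t^{\bm\pi_1}(x_t^{\bm\pi_2}, u_t^{\bm\pi_2}) - V_{t+1}^{\bm\pi_1}(x_{t+1}^{\bm\pi_2}).
\]
Summing this over $t=1,\dots,T-1$, adding the terminal term, and using $V_T^{\bm\pi_1}(x_T^{\bm\pi_2}) = x_T^{\bm\pi_2\,\top}Q_T x_T^{\bm\pi_2}$, I obtain $V_1^{\bm\pi_2}(x) = \sum_{t=1}^{T-1}\cQ_t^{\bm\pi_1}(x_t^{\bm\pi_2},u_t^{\bm\pi_2}) - \sum_{t=1}^{T-1}V_{t+1}^{\bm\pi_1}(x_{t+1}^{\bm\pi_2}) + V_T^{\bm\pi_1}(x_T^{\bm\pi_2})$. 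Re-indexing the middle sum as $\sum_{t=2}^{T-1}V_t^{\bm\pi_1}(x_t^{\bm\pi_2}) + V_T^{\bm\pi_1}(x_T^{\bm\pi_2})$ and cancelling, the right side becomes $\sum_{t=1}^{T-1}\big(\cQ_t^{\bm\pi_1}(x_t^{\bm\pi_2},u_t^{\bm\pi_2}) - V_t^{\bm\pi_1}(x_t^{\bm\pi_2})\big) + V_1^{\bm\pi_1}(x)$, which rearranges to exactly the claimed identity.

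The main subtlety — and the point I would be most careful about — is the role of the information sets $I_k$ in the definitions of $V_t^{\bm\pi}$ and $\cQ_t^{\bm\pi}$, since $\bm\pi_1$ and $\bm\pi_2$ receive (possibly inaccurate) predictions $\{d_{i|t},Q_i,R_i\}$ in addition to the true state. The identity only holds because $V_t^{\bm\pi_1}(x)$ and $\cQ_t^{\bm\pi_1}(x,u)$ are defined as the \emph{realized} costs under the true disturbance sequence $\{d_\tau\}$, with the policy $\bm\pi_1$ fed whatever information it would see along that realized trajectory; the one-step recursion for $\cQ_t^{\bm\pi_1}$ then uses the true $d_t$ in the transition, which is consistent with how $x_{t+1}^{\bm\pi_2}$ is generated. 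I would state this carefully at the outset so the Bellman substitution above is unambiguous. Everything else is routine telescoping algebra; no inequalities or estimates are needed since this is an exact identity. This lemma, specialized to $\bm\pi_1 = \bm\pi^*$ (whose $\cQ_t^*$ and $V_t^*$ differ by the quadratic-in-$(u - u_t^*)$ completion-of-squares term $(u-u_t^*)^\top(R_t + B_u^\top P_{t+1}B_u)(u-u_t^*)$ from Proposition~\ref{prop:value-function-form}) and $\bm\pi_2 = \bm\pi$, then yields Lemma~\ref{lemma:regret-formula}.
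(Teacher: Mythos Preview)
Your proposal is correct and is precisely the standard telescoping argument for the performance difference lemma; the paper itself does not give a self-contained proof of Lemma~\ref{lemma:cost-difference} but simply cites \cite{fazel2018,schulman2015}, whose proofs proceed exactly as you outline. Your added remark about the information sets and the realized-cost interpretation of $V_t^{\bm\pi_1}$ and $\cQ_t^{\bm\pi_1}$ is a worthwhile clarification that the paper leaves implicit.
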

The proof of Lemma \ref{lemma:cost-difference} can be found in literature \cite{fazel2018,schulman2015}.
By applying Lemma \ref{lemma:cost-difference}, we get the proof of Lemma \ref{lemma:regret-formula}.


\noindent\textit{Proof: }{(Lemma \ref{lemma:regret-formula})}
\vspace{-8pt}
\begin{align*}
& \textup{Regret}(\bm \pi)= V_1^{\bm \pi}(x) - V_1^{*}(x)=  \sum_{t=1}^{T-1} \cQ_t^{*}(x_t^{\bm \pi},u_t^{\bm \pi}) - V_t^{*}(x_t^{\bm \pi})\\
    & = \sum_{t=1}^{T-1} \left[x_t^{\bm \pi}{^\top} Q_tx_t^{\bm \pi} \!+ \!u_t^{\bm \pi}{^\top} R_tu_t^{\bm \pi} \!+\! V_{t+1}^*(Ax_t^{\bm \pi}\!+\!B_uu_t^{\bm \pi}\!+\!B_dd_t)\right.\\
    & -x_t^{\bm \pi}{^\top} Q_tx_t^{\bm \pi} - u_t^{*\top} R_tu_t^* \left.- V_{t+1}^*(Ax_t^{\bm \pi}\!+\!B_uu_t^*\!+\!B_dd_t)\right]\\
    &=\sum_{t=1}^{T-1} \left[u_t^{\bm \pi}{^\top} R_tu_t^{\bm \pi} +\left(y_t^{t+1:T}(x_t^{\bm \pi}) + \cA^{(T-t)}B_uu_t^{\bm \pi}\right)^\top\right.\\ &\qquad\qquad\qquad\qquad\cdot~ V_{t+1}\left(y_t^{t+1:T}(x_t^{\bm \pi}) + \cA^{(T-t)}B_uu_t^{\bm \pi}\right)\\
    &\qquad - u_t^{*\top} R_tu_t^* - \left(y_t^{t+1:T}(x_t^{\bm \pi}) + \cA^{(T-t)}B_uu_t^*\right)^\top\\& \left.\qquad\qquad\qquad\qquad \cdot ~V_{t+1}\left(y_t^{t+1:T}(x_t) + \cA^{(T-t)}B_uu_t^*\right) \right]\\
    &=\!\sum_{t=1}^{T-1}\!\left[(u_t^{\!\bm \pi}\!-\!u_t^{\!*}){\!^\top}\!(R_t\!+\!B_{\!u}^\top \cA^{(T\!-\!t)^{\!\top}}\!V_{t+1}\cA^{(T\!-\!t)}B_{\!u})(u_t^{\bm \pi}\!-\!u_t^*)\right.\\
    &\quad + 2(u_t^{\bm \pi}-u_t^*)^\top\left(R_tu_t^*+B_u^\top \cA^{(T-t)^\top}V_{t+1}\cA^{(T-t)}B_uu_t^*\right.\\&\left.\left.\qquad\qquad\qquad\qquad\qquad +B_u^\top \cA^{(T-t)^\top}V_{t+1}y_t^{t+1:T}(x_t^{\bm \pi})\right)\right]\\
    &=\!\sum_{t=1}^{T-1}\!\left[(u_t^{\bm \pi}-u_t^*)^{\!\top}(R_t+B_u^\top P_{t+1}B_u)(u_t^{\bm \pi}-u_t^*)\right.\\&\left.+ 2(u_t^{\bm \pi}-u_t^*)(R_t+B_u^\top P_{t+1}B_u)(u_t^* + G_ty_t^{t+1:T}(x_t^{\bm \pi}))\right]\\
    &=\sum_{t=1}^{T-1}(u_t^{\bm \pi}-u_t^*)^\top(R_t+B_u^\top P_{t+1}B_u)(u_t^{\bm \pi}-u_t^*) 
\end{align*}
\subsection{Proof of Theorem \ref{thm:regret-analysis-main}}\label{apdx:main-theorem}
In this section, we are going to give a rigorous proof for our main result Theorem \ref{thm:regret-analysis-main}.Proof of sketch in the main text has already given a clear structure how it is proved, what is left is purely tedious calculation and analysis. The major difficulty arises in the state trajectory generated by the MPC algorithm $x_t^\textup{MPC}$ showing up in \eqref{eq:control_difference}. We need to rewrite $x_t^\textup{MPC}$ in terms of initial state $x_1$, disturbances $ d_t$ and prediction errors $e{i|j}$, which makes the analysis quite tedious, but the structure should be clear. Thus for readers who are only looking for an intuitive understanding of the proof, we would suggest to first skip some of the detailed calculation.

As stated in the previous paragraph, the first step towards the proof is to rewrite $x_t^\textup{MPC}$ in terms of the initial state, disturbances and prediction errors, which is stated in the following proposition.
\begin{prop}\label{prop:x_t^pi}
Let $\{x_t^{\textup{MPC}}\}$ be the trajectory generated by MPC algorithm defined as in Eq \eqref{eq:MPC-rewrite}, then:
\begin{equation}\label{eq:x_t^pi}
\begin{split}
    x_t^{\textup{MPC}} &= \lbar{\Phi}^{\textup{MPC}}(t,1)x_1^{\textup{MPC}} + \sum_{i=1}^{\min\{t+W, T-1\}} M_{i|t}B_dd_i \\&- \sum_{j=1}^{t-1}\sum_{i=j}^{\min\{T-1,j+W\}} \lbar{\Phi}^{\textup{MPC}}(t,j+1)B_u\lbar{K}_j^{d,i}e_{i|j}
\end{split}
\end{equation}
where:
\begin{equation*}
    M_{i|t} \!=\! \sum_{j=\max\{\!1,i\!-\!W\!\}}^{\min\{\!t\!-\!1,i\!\}} \!\lbar{\Phi}^{\textup{MPC}}(t,j\!+\!1)(-B_u\lbar{K}_j^{d,i} + \mathbf{1}\{i\!=\!j\}I)
\end{equation*}
Additionally,
\begin{align*}
    &\|M_{i|t}\|\le c_4\rho^{t-i-1}, ~ 1\le i\le t-1\\
    &\|M_{i|t}\|\le c_5\rho^{i-t+1}, ~ i\ge t
\end{align*}
where,
\begin{align*}
    &c_4 = \|B_uR_{\min}^{-1}B_u^\top\|\lambda_{\max}(P_{\max})\tau^2\frac{1}{1-\rho^2} + \tau\\
    &c_5 = \|B_uR_{\min}^{-1}B_u^\top\|\lambda_{\max}(P_{\max})\tau^2\frac{1}{1-\rho^2}
\end{align*}
\end{prop}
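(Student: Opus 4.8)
The plan is to derive \eqref{eq:x_t^pi} by unrolling the closed-loop dynamics of the MPC algorithm and then to bound the resulting coefficient matrices $M_{i|t}$ using the exponential stability of MPC (Proposition~\ref{prop:stability-MPC}) together with the exponential decay of $\lbar K_j^{d,i}$ established in Step~3 of the proof sketch. First I would write the one-step closed-loop recursion: substituting \eqref{eq:MPC-rewrite} into \eqref{equ: linear dynamics} gives
$$ x_{t+1}^\textup{MPC} = (A - B_u\lbar K_t)x_t^\textup{MPC} - \sum_{i=t}^{\min\{t+W,T-1\}} B_u\lbar K_t^{d,i}B_d d_{i|t} + B_d d_t, $$
where I have to be careful that the real disturbance $d_t$ enters the dynamics while the predicted $d_{i|t}$ enter the control; writing $d_{i|t} = d_i + e_{i|t}$ separates the "true disturbance" contribution from the "prediction error" contribution. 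Iterating this recursion from step $1$ to step $t$, using the definition of $\lbar\Phi^\textup{MPC}(t,j+1)$ to collect the products of closed-loop matrices, and then swapping the order of the double summation over $(j,i)$, should produce exactly the three terms in \eqref{eq:x_t^pi}: the homogeneous term $\lbar\Phi^\textup{MPC}(t,1)x_1^\textup{MPC}$, the true-disturbance term $\sum_i M_{i|t}B_d d_i$ with $M_{i|t}$ aggregating all the ways $d_i$ propagates forward (either directly as $+I$ when $i=j$, i.e. it is the "real" disturbance at step $j$, or via the predicted term $-B_u\lbar K_j^{d,i}$ when it appears in a prediction window containing step $i$), and the prediction-error term. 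The index ranges $\max\{1,i-W\}\le j\le\min\{t-1,i\}$ on the inner sum for $M_{i|t}$ come precisely from requiring that $d_i$ appears in the prediction $\{d_{k|j}\}_{k=j}^{j+W}$, i.e. $j\le i\le j+W$, and that $j$ is a past step, $1\le j\le t-1$.

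Next I would bound $\|M_{i|t}\|$ by applying the triangle inequality inside the definition of $M_{i|t}$ and using $\|\lbar\Phi^\textup{MPC}(t,j+1)\|\le\tau\rho^{t-j-1}$ and $\|\lbar K_j^{d,i}\|\le \frac{\tau\|B_u\|\lambda_{\max}(P_{\max})}{\lambda_{\min}(R_{\min})}\rho^{i-j}$ from Step~3. For the case $1\le i\le t-1$, the sum over $j$ from $\max\{1,i-W\}$ to $i$ of terms like $\rho^{t-j-1}\rho^{i-j} = \rho^{t+i-1}\rho^{-2j}$ is a geometric series; the dominant $j=i$ term gives $\rho^{t-i-1}$, and summing the geometric tail yields the $\frac{1}{1-\rho^2}$ factor, while the isolated $\mathbf 1\{i=j\}$ term contributes the extra $+\tau$ in $c_4$. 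For $i\ge t$ the sum over $j$ runs only up to $t-1$ (so the $\mathbf 1\{i=j\}$ term never fires, explaining why $c_5$ lacks the $+\tau$), and the dominant $j=t-1$ term gives $\rho^{i-t+1}$ with the same geometric-series constant. Collecting the constants $\|B_uR_{\min}^{-1}B_u^\top\|$, $\lambda_{\max}(P_{\max})$, $\tau^2$ and $\frac{1}{1-\rho^2}$ reproduces $c_4,c_5$ exactly.

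I expect the main obstacle to be purely bookkeeping: getting the summation limits on the double sum $\sum_j\sum_i$ right after the index swap, and in particular correctly tracking the truncation at $T-1$ (the prediction window $\{i\mid t\}_{i=t}^{t+W}$ is clipped to $\min\{t+W,T-1\}$ near the end of the horizon, and for $t\ge T-W$ the algorithm switches to the exact offline controller, which must be handled consistently). A secondary subtlety is keeping the "real" disturbance term $+B_d d_t$ from \eqref{equ: linear dynamics} distinct from the predicted terms $-B_u\lbar K_t^{d,i}B_d d_{i|t}$ when $i=t$: the indicator $\mathbf 1\{i=j\}I$ in $M_{i|t}$ is exactly what bundles these together after substituting $d_{i|t}=d_i+e_{i|t}$. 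None of the individual estimates is hard; the care needed is in not dropping or double-counting a term across the change of summation order. Once \eqref{eq:x_t^pi} and the norm bounds on $M_{i|t}$ are in hand, they feed directly into Step~4 of the regret analysis, controlling the "matrices approximation error" term $(K_t-\lbar K_t)x_t^\textup{MPC}$ by combining Lemma~\ref{lemma:bound-K_t^i-difference} with the now-bounded trajectory.
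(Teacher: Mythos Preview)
Your proposal is correct and follows essentially the same approach as the paper: unroll the closed-loop recursion after substituting $d_{i|t}=d_i+e_{i|t}$, swap summation order to collect the $M_{i|t}$ coefficients (with the indicator capturing the direct $+B_dd_j$ term), and then bound $\|M_{i|t}\|$ via the triangle inequality using $\|\lbar\Phi^\textup{MPC}(t,j+1)\|\le\tau\rho^{t-j-1}$ and $\|B_u\lbar K_j^{d,i}\|\le\|B_uR_{\min}^{-1}B_u^\top\|\lambda_{\max}(P_{\max})\tau\rho^{i-j}$, summing the resulting geometric series in $\rho^2$. Your anticipated difficulties (index bookkeeping, horizon truncation, the role of the $\mathbf 1\{i=j\}$ term) are exactly the ones the paper brushes past, and nothing more is needed.
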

\begin{proof}
Recall from \eqref{eq:MPC-rewrite}~-~\eqref{eq:bar-K_t^i} the MPC scheme, $x_t^{\textup{MPC}}$ can be written as:
\begin{align*}
    x_t^{\textup{MPC}}&= Ax_{t-1}^{\textup{MPC}} + B_uu_{t-1}^{\textup{MPC}} + B_dd_{t-1}\\
    &=(A\!-\!B_u\lbar{K}_{t-1})x_{t-1}^{\textup{MPC}} - B_u\lbar{K}_{t-1}^{d,t-1}B_dd_{t-1|t-1} - \cdots\\& - B_u\lbar{K}_{t-1}^{d,t+W-1}B_dd_{t+W-1|t-1} + B_dd_{t-1}\\
    &=(A\!-\!B_u\lbar{K}_{t-1})x_{t-1}^{\textup{MPC}} + (-B_u\lbar{K}_{t-1}^{d,t-1}+I)B_dd_{t-1} \\&- B_u\lbar{K}_{t-1}^{d,t}B_dd_t -\cdots - B_u\lbar{K}_{t-1}^{d,t+W-1}B_dd_{t+W-1} \\
    &-\!B_u\!\lbar{K}_{t\!-\!1}^{d,t-1}\!B_de_{t\!-\!1|t\!-\!1} \!- \!\cdots\! -\! B_u\!\lbar{K}_{t-1}^{d,t\!+\!W\!-\!1}\!B_de_{t\!+\!W\!-\!1|t\!-\!1}
\end{align*}
Applying the above equation iteratively will get \eqref{eq:x_t^pi}.

We now look at the bound on $\|M_{i|t}\|$. For $1\le i\le t-1$,
\begin{align*}
  M_{i|t}= \sum_{j=\max\{1, i-W\}}^{i}\lbar{\Phi}^{\textup{MPC}}(t,j+1)(-B_u\lbar{K}_j^{d,i})\\
  +\lbar{\Phi}^{\textup{MPC}}(t-1,i).
\end{align*}

Proposition \ref{prop:stability-MPC} has already showed that $\|\lbar{\Phi}^{\textup{MPC}}(t,t_0)\| \le \tau\rho^{t-t_0}$. Thus:
\begin{align*}
   &\qquad \|M_{i|t}\|\\ 
   &\le\!\sum_{j\!=\!\max\{\!1, i\!-\!W\!\}}^{i}\!\tau\!\rho^{t\!-\!1\!-\!j}\|B_u\!R_{\min}^{-1}\!B_u^\top\|\lambda_{\max}(\!P_{\max}\!)\tau\rho^{i\!-\!j} \!+\! \tau \rho^{t\!-\!1\!-\!i}\\
    &= \|B_u\!R_{\min}^{-1}\!B_u^\top\|\lambda_{\max}(\!P_{\max}\!)\tau^2\!\rho^{t\!-\!i\!-\!1}\left(\!\sum_{j\!=\!\max\{\!1,i\!-\!W\!\}}^i(\rho^2)^{i\!-\!j}\!\right) \\
    &\quad+ \tau \rho^{t-i-1}\\
    &\le \left(\|B_uR_{\min}^{-1}B_u^\top\|\lambda_{\max}(P_{\max})\tau^2\frac{1}{1-\rho^2} + \tau\right) \rho^{t-i-1}\\
    &= c_4 \rho^{t-i-1}
\end{align*}

For $i\ge t$,
\begin{align*}
    M_{i|t} &= \sum_{j=\max\{1,i-W\}}^{t-1}\lbar{\Phi}^{\textup{MPC}}(t,j+1)(-B_u\lbar{K}_j^{d,i})\\
    &\le\! \sum_{j\!=\!\max\{\!1,i\!-\!W\!\}}^{t\!-\!1}\tau\!\rho^{t\!-\!1\!-\!j}\|B_u\!R_{\min}^{-1}\!B_u^\top\|\lambda_{\max}(\!P_{\max}\!)\tau\!\rho^{i\!-\!j}\\
    & = \|B_u\!R_{\min}^{-1}\!B_u^\top\|\lambda_{\max}(\!P_{\max}\!)\tau^2\!\rho^{i\!-\!t\!+\!1}\!\sum_{j=\max\{\!1,i\!-\!W\!\}}^{t-1}\!(\rho^2\!)^{t\!-\!1\!-\!j}\\
    &\le \|B_uR_{\min}^{-1}B_u^\top\|\lambda_{\max}(P_{\max})\tau^2\frac{1}{1-\rho^2}\rho^{i-t+1} \\&= c_5\rho^{i-t+1}
\end{align*}
\end{proof}

Now that $x_t^\textup{MPC}$ is re-written in terms of $x_1, d_t, e_{i|j}$, the next step is to write the `control action error'$u_t^{\textup{MPC}} - u_t$ in these terms as well.
\begin{prop}\label{prop:control-difference-prop}
\begin{equation*}
\begin{split}
    u_t^{\textup{MPC}} - u_t = N_{0|t}x_1 + \sum_{i=1}^{T-1}N_{i|t}B_dd_i + \sum_{j=1}^t \sum_{i=j}^{j+W} L_{(i,j)|t} B_de_{i|j}
\end{split}
\end{equation*}
where:
\begin{align*}
    N_{i|t} &\!=\! \left\{
    \begin{array}{l}
        (K_t - \lbar{K}_t)\lbar{\Phi}^{\textup{MPC}}(t,1) ,~\quad\qquad\qquad\qquad i\!=\!0  \\
        (K_t - \lbar{K}_t)M_{i|t} , \quad\qquad\qquad\qquad1\le i\le t-1\\
        (K_t - \lbar{K}_t)M_{i|t} + (K_t^{d,i} - \lbar{K}_t^{d,i}) , ~ t \le i\le t\!+\!W\\
        (K_t - \lbar{K}_t)M_{i|t} + K_t^{d,i} ,~\quad\qquad i\ge t+W+1
    \end{array}
    \right.\\
    L_{(i,j)|t}&\!=\! \left\{\!
    \begin{array}{l}
        -\!(K_t \!-\! \lbar{K}_t)\lbar{\Phi}^{\textup{MPC}}\!(t,j\!+\!1)B_u\lbar{K}_j^{d,i} , j\!\le\! t\!-\!1, i\!\ge\! j  \\
        -\lbar{K}_t^{d,i} ,\quad\qquad\qquad\qquad\qquad ~1\le i\le t\!-\!1,j\!=\!t
    \end{array}
    \right.
\end{align*}
\end{prop}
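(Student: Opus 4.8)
\textbf{Proof proposal for Proposition~\ref{prop:control-difference-prop}.}
The plan is to substitute the explicit formula for $x_t^{\textup{MPC}}$ from Proposition~\ref{prop:x_t^pi} into the control-action-error decomposition \eqref{eq:control_difference}, and then simply collect terms by their ``type'' (initial state, true disturbance, or prediction error) and match coefficients. Recall from \eqref{eq:control_difference} that
\[
u_t^{\textup{MPC}} - u_t = \sum_{i=t+W+1}^{T-1}K_t^{d,i}d_i - \sum_{i=t}^{t+W}\lbar{K}_t^{d,i}e_{i|t} + (K_t - \lbar{K}_t)x_t^{\textup{MPC}} + \sum_{i=t}^{t+W}(K_t^{d,i} - \lbar{K}_t^{d,i})d_i,
\]
so the only quantity that still needs to be expanded in terms of $x_1$, $\{d_i\}$, $\{e_{i|j}\}$ is $x_t^{\textup{MPC}}$, and Proposition~\ref{prop:x_t^pi} does precisely that. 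First I would write $x_1^{\textup{MPC}} = x_1$ and plug \eqref{eq:x_t^pi} into the term $(K_t - \lbar{K}_t)x_t^{\textup{MPC}}$, distributing $(K_t - \lbar{K}_t)$ over the three summands: the $\lbar{\Phi}^{\textup{MPC}}(t,1)x_1$ piece produces exactly $N_{0|t} = (K_t - \lbar{K}_t)\lbar{\Phi}^{\textup{MPC}}(t,1)$; the $\sum_{i=1}^{\min\{t+W,T-1\}}M_{i|t}B_dd_i$ piece produces $(K_t - \lbar{K}_t)M_{i|t}$ as a contribution to each $N_{i|t}$; and the double sum over $j\le t-1$ of $-\lbar{\Phi}^{\textup{MPC}}(t,j+1)B_u\lbar{K}_j^{d,i}e_{i|j}$ produces exactly the $j\le t-1$ case of $L_{(i,j)|t}$.

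Next I would handle the remaining three terms of \eqref{eq:control_difference}. The truncation term $\sum_{i=t+W+1}^{T-1}K_t^{d,i}d_i$ contributes $K_t^{d,i}$ to $N_{i|t}$ for $i\ge t+W+1$, which when added to the already-collected $(K_t - \lbar{K}_t)M_{i|t}$ gives the stated fourth case. The matrices-approximation disturbance term $\sum_{i=t}^{t+W}(K_t^{d,i} - \lbar{K}_t^{d,i})d_i$ contributes $(K_t^{d,i} - \lbar{K}_t^{d,i})$ to $N_{i|t}$ for $t\le i\le t+W$, giving the third case. Finally the prediction-error term $-\sum_{i=t}^{t+W}\lbar{K}_t^{d,i}e_{i|t}$ is precisely the $j = t$ case of $L_{(i,j)|t}$, namely $-\lbar{K}_t^{d,i}$ (here the index range is $t\le i\le t+W$; the statement writes $1\le i\le t-1$, which appears to be a typo for the range $t\le i \le t+W$ consistent with the outer sum $\sum_{j=1}^{t}\sum_{i=j}^{j+W}$). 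For the first case $1\le i\le t-1$ of $N_{i|t}$, only the $(K_t - \lbar{K}_t)M_{i|t}$ piece survives since none of the other three terms in \eqref{eq:control_difference} reach back to disturbances with index below $t$.

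The bookkeeping obstacle — and the only real subtlety — is getting the summation ranges to line up. In \eqref{eq:x_t^pi} the disturbance sum runs over $1\le i\le \min\{t+W,T-1\}$ while in \eqref{eq:control_difference} the matrices-approximation disturbance sum runs over $t\le i\le t+W$ and the truncation sum over $t+W+1\le i\le T-1$; one has to verify these overlap correctly so that the four cases of $N_{i|t}$ partition $\{0,1,\dots,T-1\}$ without gaps or double-counting at the boundaries $i = t$, $i = t+W$, $i = t+W+1$. Likewise for $L_{(i,j)|t}$ one must check that the $j\le t-1$ contributions (coming only from expanding $x_t^{\textup{MPC}}$) and the $j = t$ contribution (coming only from the prediction-error term of \eqref{eq:control_difference}) together exhaust the double sum $\sum_{j=1}^{t}\sum_{i=j}^{j+W}$, and that no $e_{i|j}$ with $j\le t-1$ appears in the prediction-error term (it does not, since that term only involves $e_{i|t}$). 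Once the ranges are reconciled, the identity is just a relabeling, so no estimates or inequalities are needed here; the proof is a direct substitution followed by collecting coefficients.
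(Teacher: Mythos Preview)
Your proposal is correct and follows essentially the same route as the paper: start from the decomposition \eqref{eq:control_difference}, substitute the expression for $x_t^{\textup{MPC}}$ given by Proposition~\ref{prop:x_t^pi}, and collect coefficients on $x_1$, $B_dd_i$, and $B_de_{i|j}$. Your observation about the index range in the $j=t$ case of $L_{(i,j)|t}$ being a typo (it should be $t\le i\le t+W$, not $1\le i\le t-1$) is also apt and consistent with how the paper actually uses these coefficients downstream.
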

\begin{proof}
\begin{equation*}
\begin{split}
     & u_t^{\textup{MPC}} - u_t^* =\\ &(K_t - \lbar{K}_t)x_t^{\textup{MPC}} + \sum_{i=t}^{t+W}(K_t^{d,i} - \lbar{K}_t^{d,i})d_{i} \\&+ \sum_{t+W+1}^{T-1}K_t^{d,i}d_{i} -\sum_{i=t}^{t+W}\lbar{K}_t^{d,i}e_{i|t}.
\end{split}
\end{equation*}
\begin{align*}
   &(K_t - \lbar{K}_t)x_t + \sum_{i=t}^{t+W}(K_t^{d,i} - \lbar{K}_t^{d,i})d_{i}+ \sum_{t+W+1}^{T-1}K_t^{d,i}d_{i}\\
    &=(K_t - \lbar{K}_t)\lbar{\Phi}^{\textup{MPC}}(t,1)x_1 + \sum_{i=1}^{t+W} (K_t - \lbar{K}_t)M_{i|t}B_dd_i
    \\\quad &- \sum_{j=1}^{t-1}\sum_{i=j}^{j+W} (K_t - \lbar{K}_t)\lbar{\Phi}^{\textup{MPC}}(t,j+1)B_u\lbar{K}_j^{d,i}e_{i|j} \\&+ \sum_{i=t}^{t+W}(K_t^{d,i} - \lbar{K}_t^{d,i})B_dd_i+ \sum_{t+W+1}^{T-1}K_t^{d,i}d_{i}\\
    &= N_{0|t}x_1 + \sum_{i=1}^{T-1}N_{i|t}B_dd_i + \sum_{j=1}^{t-1}\sum_{i=j}^{j+W} L_{(i,j)|t}B_de_{i|j}\\
\end{align*}
Combining the above two equations completes the proof.
\end{proof}
The next proposition bounds the norms of the matrices appeared in Proposition \ref{prop:control-difference-prop}.
\begin{prop}\label{prop:bound-N_i|t}
\begin{align*}
    &\|N_{0|t}\| \le c\gamma^W\rho^{t-1}\\
    &\|N_{i|t}\| \le c\gamma^W\rho^{t-i-1}, ~ 1\le i\le t-1\\
    &\|N_{i|t}\| \le c\gamma^{W-i+t}\rho^{i-t}, ~t\le i\le t+W\\
    &\|N_{i|t}\| \le c\rho^{i-t}, ~t\ge t+W+1\\
    &\|L_{(i,j|t)}\|\le c\gamma^W\rho^{t-2j+i-1}, ~ j\le t-1\\
    &\|L_{(i,t|t)}\|\le c\rho^{i-t}
\end{align*}
where the constant $c$ can be expressed by 
$$R_{\min}, R_{\max}, Q_{\min}, Q_{\max}, A, B_u$$
\end{prop}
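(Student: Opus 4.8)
The plan is to establish the six bounds by a direct, term‑by‑term estimate, exploiting the fact that Proposition~\ref{prop:control-difference-prop} writes each $N_{i|t}$ and each $L_{(i,j)|t}$ as a single matrix product, or a sum of two such products, whose individual factors have all been bounded already. First I would assemble the building blocks: from Lemma~\ref{lemma:bound-K_t^i-difference}, $\|K_t-\lbar{K}_t\|\le\alpha_3\gamma^W$ and $\|K_t^{d,i}-\lbar{K}_t^{d,i}\|\le\alpha_4\gamma^{W-i+t}\rho^{i-t}$ for $t\le i\le t+W$; from Proposition~\ref{prop:stability-MPC}, $\|\lbar{\Phi}^\textup{MPC}(t,t_0)\|\le\tau\rho^{t-t_0}$; from Proposition~\ref{prop:x_t^pi}, $\|M_{i|t}\|\le c_4\rho^{t-i-1}$ for $1\le i\le t-1$ and $\|M_{i|t}\|\le c_5\rho^{i-t+1}$ for $i\ge t$; and from Corollary~\ref{coro:bound k_t^i} together with Step~3 of the sketch, $\|K_t^{d,i}\|\le\frac{\tau\|B_u\|\lambda_{\max}(P_{\max})}{\lambda_{\min}(R_{\min})}\rho^{i-t}$ and the same bound for $\|\lbar{K}_t^{d,i}\|$. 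I would also record that $\gamma\in(0,1)$ (immediate from the definition of $\gamma$ and $Q_{\min}\succ0$) and $\rho\in(0,1)$; these are the only facts needed to massage the geometric factors.

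Next, for each case I would apply submultiplicativity of the operator norm, together with the triangle inequality wherever a sum of two products appears. For $N_{0|t}=(K_t-\lbar{K}_t)\lbar{\Phi}^\textup{MPC}(t,1)$ this gives $\le\alpha_3\tau\,\gamma^W\rho^{t-1}$. For $1\le i\le t-1$, pairing $\|K_t-\lbar{K}_t\|$ with $\|M_{i|t}\|$ gives $\le\alpha_3 c_4\,\gamma^W\rho^{t-i-1}$. For $t\le i\le t+W$, the term $(K_t-\lbar{K}_t)M_{i|t}$ contributes $\le\alpha_3 c_5\,\gamma^W\rho^{i-t+1}$ and $(K_t^{d,i}-\lbar{K}_t^{d,i})$ contributes $\le\alpha_4\,\gamma^{W-i+t}\rho^{i-t}$. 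For $i\ge t+W+1$, the first term is again $\le\alpha_3 c_5\,\gamma^W\rho^{i-t+1}$ and the second is $\|K_t^{d,i}\|\le\frac{\tau\|B_u\|\lambda_{\max}(P_{\max})}{\lambda_{\min}(R_{\min})}\rho^{i-t}$. For $L_{(i,j)|t}$ with $j\le t-1$, chaining the bounds on $\|K_t-\lbar{K}_t\|$, $\|\lbar{\Phi}^\textup{MPC}(t,j+1)\|$, $\|B_u\|$ and $\|\lbar{K}_j^{d,i}\|$ produces a factor $\gamma^W\rho^{(t-j-1)+(i-j)}$; for $j=t$ the term is $-\lbar{K}_t^{d,i}$ (in the relevant range $t\le i\le t+W$), already bounded by $\frac{\tau\|B_u\|\lambda_{\max}(P_{\max})}{\lambda_{\min}(R_{\min})}\rho^{i-t}$.

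The final step is to reconcile these products with the exact exponents claimed, using only $0<\gamma,\rho<1$. For $t\le i\le t+W$ one has $\gamma^W\rho\le\gamma^W\le\gamma^{W-i+t}$, hence $\gamma^W\rho^{i-t+1}\le\gamma^{W-i+t}\rho^{i-t}$, so the $M$‑term is dominated by the difference term's form; for $i\ge t+W+1$ the cruder inequality $\gamma^W\rho\le1$ gives $\gamma^W\rho^{i-t+1}\le\rho^{i-t}$; and $(t-j-1)+(i-j)=t-2j+i-1$ is exactly the exponent in the $L$‑bound. I would then take $c$ to be the maximum of the finitely many constants generated above; each is a fixed expression in $\tau,\rho,\gamma,\lambda_{\max}(P_{\max}),\alpha_3,\alpha_4,c_4,c_5$, and through the explicit formulas in Propositions~\ref{prop:stability-LQR}, \ref{prop:x_t^pi} and Lemma~\ref{lemma:bound-K_t^i-difference} these are all functions of $R_{\min},R_{\max},Q_{\min},Q_{\max},A,B_u$ alone, with no dependence on $W$.

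I do not anticipate any genuine mathematical difficulty here: the proposition is a mechanical corollary of the earlier lemmas. The part requiring care is bookkeeping — keeping the six piecewise index ranges of $N_{i|t}$ and $L_{(i,j)|t}$ precisely synchronized with the ranges in Propositions~\ref{prop:x_t^pi} and~\ref{prop:control-difference-prop}, and checking at every absorption step that only a factor of the form $\gamma^a\rho^b\le1$ with $a,b\ge0$ (or the single benign comparison $W-i+t\le W$) is being discarded, so that $c$ never acquires a hidden $W$‑dependence.
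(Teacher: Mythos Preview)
Your proposal is correct and follows essentially the same approach as the paper: both collect the pre-established bounds on $\|K_t-\lbar{K}_t\|$, $\|K_t^{d,i}-\lbar{K}_t^{d,i}\|$, $\|M_{i|t}\|$, $\|\lbar{\Phi}^{\textup{MPC}}\|$, $\|K_t^{d,i}\|$, $\|\lbar{K}_t^{d,i}\|$, then treat the six cases by submultiplicativity plus the absorption $\gamma^W\rho^{i-t+1}\le\gamma^{W-i+t}\rho^{i-t}$ (resp.\ $\le\rho^{i-t}$) to unify the constants. The only cosmetic difference is that the paper relabels the lemma constants as $c_1,\dots,c_5$ whereas you keep $\alpha_3,\alpha_4,c_4,c_5$ and the explicit Corollary~\ref{coro:bound k_t^i} constant.
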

\begin{proof}
From Corollary \ref{coro:bound k_t^i}, Lemma \ref{lemma:bound-K_t^i-difference}, \ref{prop:x_t^pi} we have that:
\begin{align*}
    &\|K_t^{d,i}\|, \|\lbar{K}_t^{d,i}\|\le c_1\rho^{i-t}, ~i\ge t\\
    &\|K_t - \lbar{K}_t\| \le c_2\gamma^W\\
    &\|K_t^{d,i} - \lbar{K}_t^{d,i}\| \le c_3 \gamma^{W-i+t}\rho^{i-t}\\
     &\|M_{i|t}\|\le c_4\rho^{t-i-1}, ~ 1\le i\le t-1\\
    &\|M_{i|t}\|\le c_5\rho^{i-t+1}, ~ i\ge t
\end{align*}
We now bound $\|N_{i|t}\|$:
\begin{align*}
    \|N_{0|t}\| &\le \|K_t - \lbar{K}_t\|\|\lbar{\Phi}^{\textup{MPC}}(t,1)\|\\
    &\le c_2\gamma^W\tau\rho^{t-1} \le c\gamma^W\rho^{t-1}
\end{align*}
For $1\le i\le t-1$,
\begin{align*}
    \|N_{i|t}\| &\le \|K_t - \lbar{K}_t\|\|M_{i|t}\|\\
    &\le c_1\gamma^W c_4\rho^{t-i-1} \le c\gamma^W\rho^{t-i-1}
\end{align*}
For $t\le i\le t+W$,
\begin{align*}
    \|N_{i|t}\| &\le \|K_t - \lbar{K}_t\|\|M_{i|t}\| + \|K_t^{d,i} - \lbar{K}_t^{d,i}\|\\
    &\le c_2\gamma^Wc_5\rho^{i-t+1} + c_3 \gamma^{W-i+t}\rho^{i-t}\\
    &\le (\rho c_2c_5 + c_3)\gamma^{W-i+t}\rho^{i-t} \le c \gamma^{W-i+t}\rho^{i-t}
\end{align*}
For $i\ge t+W+1$
\begin{align*}
    \|N_{i|t}\|&\le\|(K_t - \lbar{K}_t)M_{i|t} \|+\|K_t^{d,i}\|\\
    &\le c_2\gamma^W c_5\rho^{i-t+1} + 2c_1\rho^{i-t}\\
    &\le c\rho^{i-t}
\end{align*}
For $j\le t-1, i\ge j$
\begin{align*}
    \|L_{(i,j)|t}\|&=\|(K_t - \lbar{K}_t)\lbar{\Phi}^{\textup{MPC}}(t,j+1)B_u\bar{K}_j^{i-j+1}\|\\
    &\le c_2\gamma^W\tau\rho^{t-j-1}\|B_u\|c_1\rho^{i-j}\le c\gamma^W\rho^{t-2j+i-1} 
\end{align*}
For $i\ge t$
\begin{equation*}
    \|L_{(i,t)|t}\| =\|\bar{K}_t^{i-t+1}\|\le c_1\rho^{i-t}
\end{equation*}
\end{proof}

We are now ready to give a rigours proof for Theorem \ref{thm:regret-analysis-main}. According to Lemma \ref{lemma:regret-formula}, the regret is a quadratic sum over the `control action error' $(u_t^\textup{MPC}-u_t^*)$. The most part of the proof below is to translate this quadratic sum over the 'control action error' to a quadratic sum over $x_1, d_t, e_{i|j}$, which serves as another main source of cumbersome calculation.
\begin{proof}{(Theorem \ref{thm:regret-analysis-main})}
Applying the regret formula in Lemma \ref{lemma:regret-formula}, we have that
\begin{align*}
&\textup{Regret}(\bm \pi) =  V_1^{\bm\pi}(x)-V_1^*(x)\\
&=\sum_{t=1}^{T-1}(u_t^\textup{MPC} - u_t^*)(R_t+B_u^\top P_{t+1}B_u)(u_t^\textup{MPC} - u_t^*)\\
&\le \|R_{\max}+B_u^\top P_{\max}B_u\|\sum_{t=1}^{T-1}\|u_t^\textup{MPC} - u_t^*\|^2
\end{align*}
\begin{align*}
   & \|u_t^\textup{MPC} - u_t^*\|^2\\&= \|N_{0|t}x_1 + \sum_{i=1}^{T-1}N_{i|t}B_dd_i + \sum_{j=1}^t \sum_{i=j}^{T-1} L_{(i,j)|t} B_de_{i|j}\|^2\\
    &\le 2\|N_{0|t}x_1 + \sum_{i=1}^{T-1}N_{i|t}B_dd_i|\|^2 + 2 \|\sum_{j=1}^t\sum_{i=j}^{T-1} L_{(i,j)|t} B_de_{i|j}\|^2
\end{align*}
We first bound the term:
\begin{align*}
   &\quad \sum_{t=1}^T \|N_{0|t}x_1 + \sum_{i=1}^{T-1}N_{i|t}B_dd_i|\|^2\\
    &\le \sum_{t=1}^T \left(\|N_{0|t}\|\|x_1\| + \sum_{i=1}^{T-1}\|N_{i|t}\|\|B_dd_i|\|\right)^2\\
    &\le c^2 \sum_{t=1}^T\left(a_0^t\|x_1\| + \sum_{i=1}^{T-1}a_i^t\|B_dd_i|\|\right)^2\\
\end{align*}
The last inequality holds for the reason that Proposition \ref{prop:bound-N_i|t} implies
$$\|N_{i|t}\| \le c\cdot a_i^t,$$
where $$a_i^t = \gamma^{W-\min\left\{\max\{0,i-t\},W\right\}}\rho^{\max\{i-t, t-i-1\}}.$$

Applying Lemma \ref{lemma:auxillary} in Appendix \ref{apdx:others}, we have
\begin{align*}\label{eq:proof-final-eq2}
    &\quad \sum_{t=1}^T \|N_{0|t}x_1 + \sum_{i=1}^{T-1}N_{i|t}B_dd_i|\|^2\\&\le
    c^2\cdot\max_i\left\{\sum_{t=1}^{T-1}a_i^t\left(\sum_{j=0}^{T-1}a_j^t\right)\right\} \left(\|x_1\|^2 + \sum_{t=1}^{T-1}\|B_dd_t\|^2\right)
\end{align*}
For fixed $t$,
\begin{align*}
    &\sum_{j=0}^{T-1}a_j^t = \sum_{j=0}^{t-1}a_j^t +\! \sum_{j=t}^{\min\{\!t\!+\!W\!-\!1,T\!-\!1\!\}}\!a_j^t + \!\sum_{j \!= \!\min\{\!t\!+\!W\!-\!1,T\!-\!1\}\!+\!1}^{T-1}\!a_j^t\\
    &\le \sum_{j=-\infty}^{t-1}\gamma^W\rho^{t-j-1} + \sum_{j=t}^{t+W-1}\gamma^{W-j+t}\rho^{j-t} + \sum_{j=t+W}^{+\infty}\rho^{j-t}\\
    &= \gamma^W \frac{1}{1-\rho} + \sum_{i=0}^{W-1}\rho^{W-i}\rho^i + \rho^W\frac{1}{1-\rho}\\
    &= \frac{1}{1-\rho}(\gamma^W+\rho^W) + \gamma\frac{\rho^W-\gamma^W}{\rho-\gamma}\\
    &\le\left(\frac{2}{1-\gamma_0} + W\right)\gamma_0^W
\end{align*}
For fixed $i$,
\begin{align*}
   &\qquad \sum_{t=1}^{T-1}a_i^t\\ &= \sum_{t=1}^{\max\{0,i-W+1\}-1}a_i^t + \sum_{t=\max\{0,i-W+1\}}^{i}a_i^t + \sum_{t = i+1}^{T-1}a_i^t\\
    &\le \sum_{t=-\infty}^{\max\{0,i-W+1\}-1}\rho^{i-t} + \sum_{t=\max\{0,i-W+1\}}^{i} \gamma^{W-i+t}\rho^{i-t}\\&\qquad + \sum_{t = i+1}^{+\infty} \gamma^W\rho^{t-i-1}\\
    &= \rho^W\frac{1}{1-\rho} + \sum_{i=0}^{W-1}\rho^{W-i}\rho^i + \gamma^W \frac{1}{1-\rho}\\
    &= \frac{1}{1-\rho}(\gamma^W+\rho^W) + \gamma\frac{\rho^W-\gamma^W}{\rho-\gamma}\\
    &\le\left(\frac{2}{1-\gamma_0} + W\right)\gamma_0^W
\end{align*}
Thus:
\begin{equation}\label{eq:proof-final-eq3}
\begin{split}
&\quad \max_i\left\{\sum_{t=1}^{T-1}a_i^t\left(\sum_{j=0}^{T-1}a_j^t\right)\right\} \\&\le \left(\frac{1}{1-\rho}(\gamma^W+\rho^W) + \gamma\frac{\rho^W-\gamma^W}{\rho-\gamma}\right)^2\\
  & \le \left(\left(\frac{2}{1-\gamma_0} + W\right)\gamma_0^W\right)^2
\end{split}
\end{equation}
Thus we have that:
\begin{align*}
   &\quad \sum_{t=1}^T \|N_{0|t}x_1 + \sum_{i=1}^{T-1}N_{i|t}B_dd_i|\|^2\\
    &\le c^2 \left[\left(\frac{2}{1-\gamma_0} + W\right)\gamma_0^W\right]^2\left(\|x\|^2 + \sum_{t=1}^T\|B_d d_t\|^2\right).
\end{align*}
We now bound
\begin{align*}
    &\quad \sum_{t=1}^{T-1}\|\sum_{j=1}^t\sum_{i=j}^{T-1} L_{(i,j)|t} B_de_{i|j}\|^2\\
    &\le  \sum_{t=1}^{T-1}\left(\sum_{j=1}^t\sum_{i=j}^{T-1} \|L_{(i,j)|t}\| \|B_de_{i|j}\|\right)^2\\
    &\le c^2 \sum_{t=1}^{T-1}\left(\sum_{j=1}^t\sum_{i=j}^{T-1} a_{i,j}^t \|B_de_{i|j}\|\right)^2
\end{align*}
where
\begin{equation*}
    \alpha_{i,j}^t = \left\{
    \begin{array}{ll}
        \rho^{t-2j+i-1}\gamma^W, & j\le t-1 \\
        \rho^{i-t}, & j=t\\
        0,&j\ge t+1
    \end{array}
    \right.
\end{equation*}
Applying Lemma \ref{lemma:auxillary}  we have that:
\begin{align*}
    &\quad\sum_{t=1}^{T-1}\left(\sum_{j=1}^t\sum_{i=j}^{T-1} a_{i,j}^t \|B_de_{i|j}\|\right)^2 \\&\le \sum_{j=1}^{T-1}\sum_{i=j}^{T-1}\left(\sum_{t=1}^{T-1}a_{i,j}^t\sum_{i^\prime \ge j^\prime}a_{i^\prime,j^\prime}^t\right)\|B_de_{i|j}\|^2
\end{align*}
For fixed $t$,
\begin{align*}
    \sum_{i^\prime \ge j^\prime}a_{i^\prime,j^\prime}^t &= \sum_{j^\prime=1}^{t-1} \sum_{i^\prime=j^\prime}^{T-1}\rho^{t-2j^\prime+i^\prime-1}\gamma^W + \sum_{i^\prime = t}^{T-1}\rho^{i^\prime - t}\\
    &\le \frac{1}{1-\rho} + \frac{\gamma^W}{1-\rho^2}
\end{align*}
For fixed $i,j$,
\begin{align*}
    \sum_{t=1}^{T-1}a_{i,j}^t &= \rho^{i-j} + \sum_{t=j+1}^{T-1}\rho^{t-2j+i-1}\gamma^W\\
    &=\rho^{i-j}\left(1+\frac{\gamma^W}{1-\rho}\right)
\end{align*}
Thus:
\begin{align*}
    &\quad \sum_{t=1}^{T-1}\|\sum_{j=1}^t\sum_{i=j}^{T-1} L_{(i,j)|t} B_de_{i|j}\|^2\\
    &\le  c^2\left(\frac{1}{1\!-\!\rho}\!+\!\frac{\gamma^W}{1\!-\!\rho^2}\right)\!\left(1\!+\!\frac{\gamma^W}{1\!-\!\rho}\right)\sum_{j=1}^{T-1}\sum_{i=j}^{T-1} \rho^{i\!-\!j}\|B_de_{i|j}\|^2
\end{align*}
Combining the above inequalities we have that:
\begin{small}
\begin{equation*}
\begin{split}
    &\qquad \textup{Regret}(\bm \pi)\\ &\le c^2\left[\frac{1}{1\!-\!\rho}(\gamma^W\!+\!\rho^W) \!+\! \gamma\frac{\rho^{\!W}\!-\!\gamma^{\!W}}{\rho\!-\!\gamma}\right]^2\left(\|x\|^2 + \sum_{t=1}^T\|B_d d_t\|^2\right)\\
   &+ c^2\left(\frac{1}{1\!-\!\rho}\!+\!\frac{\gamma^W}{1\!-\!\rho^2}\right)\left(1\!+\!\frac{\gamma^W}{1\!-\!\rho}\right)\sum_{j=1}^{T-1}\sum_{i=j}^{j+W} \rho^{i-j}\|B_de_{i|j}\|^2,
\end{split}
\end{equation*}
\end{small}
which completes the proof.
\end{proof}

\subsection{Proof of Lemma \ref{lemma:bound-K_t^i-difference}}\label{apdx:bound-K_t^i}
We define the following auxillary variables:
\begin{equation*}
   X_t^{d,s} = \Phi(s+1,t+1)^\top P_{s+1},~s\ge t
\end{equation*}
Similarly, we can define $\lbar{X}_{r|t}^{d,s}$.
\begin{equation*}
    \lbar{X}_{r|t}^{d,s} = \lbar{\Phi_t}(s+1,r+1)^\top \lbar{P}_{s+1|t}
\end{equation*}
It is not hard to verify that
\begin{align*}
    K_t^{d,s} &= (R_t + B_u^\top P_{t+1}B_u)^{-1}B_u^\top X_t^{d,s}\\
     \lbar{K}_i^{d,s} &= (R_t + B_u^\top \lbar{P}_{t+1|t}B_u)^{-1}B_u^\top \lbar{X}_{t|t}^{d,s}
\end{align*}
The following proposition relates the difference of $\lbar{X}_{t|t}^{d,s}-X_t^{d,s}$ to $\lbar{P}_{s|t} - P_{s}$.
\begin{prop}\label{prop:difference-X_t^i}
For $1 \le i \le W$, 
\begin{equation*}
\begin{split}
   &\lbar{X}_{t|t}^{d,s}-X_t^{d,s} =  \Phi(s+1,t+1)^\top (\lbar{P}_{s+1|t} - P_{s+1})\\
    &- \sum_{r=t+1}^{s} \Phi(r+1,t+1)^\top(\lbar{P}_{r+1|t} - P_{r+1})^\top B_u \\&\qquad\qquad\qquad\qquad~ (R_{r} + B_u^\top\lbar{P}_{r+1|t}B_u)^{-1}B_u^\top \bar{X}_{r|t}^{s}
\end{split}
\end{equation*}
\end{prop}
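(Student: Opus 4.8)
The plan is to establish the identity by unrolling a linear backward recursion in the time index, the only ingredients being the semigroup property of the closed-loop transition matrices and a one-line Riccati perturbation identity for $\bar{K}_{r|t}-K_r$. Throughout I take $t\le s\le t+W$ so that every quantity is well defined, write $X_r^{d,s}:=\Phi(s+1,r+1)^\top P_{s+1}$ (extending the notation of the excerpt to a general time index $r$), and abbreviate $E_r:=\bar{X}_{r|t}^{d,s}-X_r^{d,s}$. From $\Phi(s+1,r+1)=\Phi(s+1,r+2)(A-B_uK_{r+1})$ and the analogous factorization of $\bar{\Phi}_t(s+1,r+1)$, one reads off for $t\le r<s$ that $X_r^{d,s}=(A-B_uK_{r+1})^\top X_{r+1}^{d,s}$ and $\bar{X}_{r|t}^{d,s}=(A-B_u\bar{K}_{r+1|t})^\top\bar{X}_{r+1|t}^{d,s}$, with base cases $X_s^{d,s}=P_{s+1}$ and $\bar{X}_{s|t}^{d,s}=\bar{P}_{s+1|t}$, so that $E_s=\bar{P}_{s+1|t}-P_{s+1}$.

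The key step is to turn this into a recursion for $E_r$ whose homogeneous part involves $K$ rather than $\bar{K}$. Writing $A-B_u\bar{K}_{r+1|t}=(A-B_uK_{r+1})-B_u(\bar{K}_{r+1|t}-K_{r+1})$ and substituting into the recursion for $\bar{X}_{r|t}^{d,s}$ produces a cross term $-(\bar{K}_{r+1|t}-K_{r+1})^\top B_u^\top\bar{X}_{r+1|t}^{d,s}$; subtracting $X_r^{d,s}=(A-B_uK_{r+1})^\top X_{r+1}^{d,s}$ then leaves $(A-B_uK_{r+1})^\top$ multiplying exactly $\bar{X}_{r+1|t}^{d,s}-X_{r+1}^{d,s}=E_{r+1}$, provided one keeps the cross term attached to $\bar{X}_{r+1|t}^{d,s}$ and does not split it between $E_{r+1}$ and $X_{r+1}^{d,s}$. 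This gives
\[
E_r=(A-B_uK_{r+1})^\top E_{r+1}-(\bar{K}_{r+1|t}-K_{r+1})^\top B_u^\top\bar{X}_{r+1|t}^{d,s}.
\]

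Unrolling this from $r=t$ up to the base case at $r=s$, the telescoped products $(A-B_uK_{t+1})^\top\cdots(A-B_uK_{s})^\top$ and $(A-B_uK_{t+1})^\top\cdots(A-B_uK_{r-1})^\top$ collapse, by the definition of $\Phi$, into $\Phi(s+1,t+1)^\top$ and $\Phi(r,t+1)^\top$ (with $\Phi(t+1,t+1)^\top=I$ for the $r=t+1$ term), yielding
\[
E_t=\Phi(s+1,t+1)^\top(\bar{P}_{s+1|t}-P_{s+1})-\sum_{r=t+1}^{s}\Phi(r,t+1)^\top(\bar{K}_{r|t}-K_r)^\top B_u^\top\bar{X}_{r|t}^{d,s}.
\]
It remains to rewrite $(\bar{K}_{r|t}-K_r)^\top$. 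Subtracting the defining relations $(R_r+B_u^\top P_{r+1}B_u)K_r=B_u^\top P_{r+1}A$ and $(R_r+B_u^\top\bar{P}_{r+1|t}B_u)\bar{K}_{r|t}=B_u^\top\bar{P}_{r+1|t}A$ and rearranging gives $\bar{K}_{r|t}-K_r=(R_r+B_u^\top\bar{P}_{r+1|t}B_u)^{-1}B_u^\top(\bar{P}_{r+1|t}-P_{r+1})(A-B_uK_r)$, hence, using symmetry of $\bar{P}_{r+1|t}-P_{r+1}$ and of $R_r+B_u^\top\bar{P}_{r+1|t}B_u$, $(\bar{K}_{r|t}-K_r)^\top B_u^\top=(A-B_uK_r)^\top(\bar{P}_{r+1|t}-P_{r+1})B_u(R_r+B_u^\top\bar{P}_{r+1|t}B_u)^{-1}B_u^\top$. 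Absorbing the extra factor $(A-B_uK_r)^\top$ into $\Phi(r,t+1)^\top$ via $\Phi(r+1,t+1)^\top=\Phi(r,t+1)^\top(A-B_uK_r)^\top$ turns each summand into $\Phi(r+1,t+1)^\top(\bar{P}_{r+1|t}-P_{r+1})B_u(R_r+B_u^\top\bar{P}_{r+1|t}B_u)^{-1}B_u^\top\bar{X}_{r|t}^{d,s}$, which is precisely the asserted formula.

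I expect no conceptual difficulty here — the statement is a variation-of-constants formula for a linear backward recursion — but two points require care. First, the recombination in the second step: one must keep the cross term glued to $\bar{X}_{r+1|t}^{d,s}$ so that the homogeneous factor is $A-B_uK$ and hence $\Phi$, not $\bar{\Phi}_t$, appears in the unrolled formula; keeping the transpose and ordering conventions of $\Phi$ and $\bar{\Phi}_t$ straight is the most error-prone part. Second, the Riccati perturbation identity in the last step is standard, but it must be written in exactly the displayed form — with the factor $A-B_uK_r$ on the right — for it to merge cleanly with $\Phi(r,t+1)$ and reproduce the $\Phi(r+1,t+1)$ appearing in the claim.
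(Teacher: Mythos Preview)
Your proof is correct and follows essentially the same route as the paper's: both derive the one-step recursion $E_r=(A-B_uK_{r+1})^\top E_{r+1}-(\bar K_{r+1|t}-K_{r+1})^\top B_u^\top\bar X_{r+1|t}^{d,s}$ by splitting $A-B_u\bar K_{r+1|t}$, telescope it back to $r=t$, and then insert the same Riccati perturbation identity $\bar K_{r|t}-K_r=(R_r+B_u^\top\bar P_{r+1|t}B_u)^{-1}B_u^\top(\bar P_{r+1|t}-P_{r+1})(A-B_uK_r)$ to absorb the closed-loop factor into $\Phi(r+1,t+1)^\top$. Your write-up is in fact a bit more careful than the paper's about transposes and about why the cross term must stay attached to $\bar X_{r+1|t}^{d,s}$, but the method is identical.
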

\begin{proof}
\begin{equation*}
\begin{split}
    \lbar{X}_{r|t}^{d,s} &= (A-B_u\lbar{K}_{r+1|t})^\top \lbar{X}_{r+1|t}^{d,s}\\
    X_{r}^{d,s} &= (A-B_uK_{r+1})^\top X_{r+1}^{d,s}, 
\end{split}
\end{equation*}
Thus,
\begin{equation*}
\begin{split}
    \lbar{X}_{t|t}^{d,s} \!- \!K_{t}^{d,s}\! &= ((A\!-\!B_u\lbar{K}_{t\!+\!1|t})^\top\! -\! (A\!-\!B_uK_{t\!+\!1})^\top)\lbar{X}_{t\!+\!1|t}^{d,s}\\
    &\qquad + (A-B_uK_{t+1})^\top (\lbar{X}_{t+1|t}^{d,s}-X_{t+1}^{d,s})\\
    &= -(\lbar{K}_{t+1|t}-K_{t+1})^\top B_u^\top \lbar{X}_{t+1|t}^{d,s}\\
    &\qquad+\Phi(t+2,t+1)^\top (\lbar{X}_{t+1|t}^{d,s}-X_{t+1}^{d,s})\\
    &=\cdots\\
    &=-\sum_{r=t+1}^s \Phi(r,t+1)(\lbar{K}_{r|t}-K_{r})^\top B_u^\top \lbar{X}_{r|t}^{d,s} \\
    &\qquad + \Phi(s+1,t+1)^\top (\lbar{X}_{s|t}^{d,s}-X_{s}^{d,s})
\end{split}
\end{equation*}
Since,
\begin{equation*}
\begin{split}
    &\lbar{K}_{r|t}-K_{r} = (R_{r}+B_u^\top \lbar{P}_{r+1|t}B_u)^{-1}B_u^\top\lbar{P}_{r+1|t}A \\&\qquad\qquad\qquad- (R_{r}+B_u^\top P_{r+1}B_u)^{-1}B_u^\top P_{r+1}A\\
    &= (R_{r}+B_u^\top \lbar{P}_{r+1|t}B_u)^{-1}B_u^\top(\lbar{P}_{r+1|t}-P_{r+1})A\\
    & \!+ \!\left((R_{r}\!+\!B_u^\top \lbar{P}_{r\!+\!1|t}B_u)^{\!-\!1}\!-\!(R_{r}\!+\!B_u^\top P_{r+1}B_u)^{\!-\!1}\!\right)B_u^{\!\top} P_{r\!+\!1}A\\
    &= (R_{r}+B_u^\top \lbar{P}_{r+1|t}B_u)^{-1}B_u^\top(\lbar{P}_{r+1|t}-P_{r+1})A\\
    &\quad -  (R_{r}+B_u^\top \lbar{P}_{r+1|t}B_u)^{-1}B_u^\top(\lbar{P}_{r+1|t}-P_{r+1})\\
    &\qquad\qquad\qquad\qquad\quad B_u(R_{r}+B_u^\top P_{r+1}B_u)^{-1}B_u^\top P_{r+1}A\\
    &= (R_{r}+B_u^\top \lbar{P}_{r\!+\!1|t}B_u)^{\!-\!1}B_u^\top(\lbar{P}_{r\!+\!1|t}-P_{r\!+\!1})(A-B_uK_{r})
\end{split}
\end{equation*}
Substitute the equation into the previous equation, we get
\begin{equation*}
\begin{split}
   &\lbar{X}_{t|t}^{d,s}-X_t^{d,s} =  \Phi(s+1,t+1)^\top (\lbar{P}_{s+1|t} - P_{s+1})\\
    &- \sum_{r=t+1}^{s} \Phi(r+1,t+1)^\top(\lbar{P}_{r+1|t} - P_{r+1})^\top B_u \\&\qquad\qquad\qquad\qquad~ (R_{r} + B_u^\top\lbar{P}_{r+1|t}B_u)^{-1}B_u^\top \bar{X}_{r|t}^{s}
\end{split}
\end{equation*}
which completes the proof.
\end{proof}

\begin{prop}\label{prop:difference-K_t^i}
\begin{align*}
    &\|\lbar{X}_{t|t}^{d,s}-X_t^{d,s}\| \le\\ &\frac{\lambda_{\max}(P_{\max})^4}{(1-\gamma)\lambda_{\min}(P_{\min})^2}\left(\|B_uR_{\min}^{-1}B_u^\top\| + 1\right) \rho^{s-t}\gamma^{W-s+t}
\end{align*}
\end{prop}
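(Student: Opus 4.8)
The plan is to start from the exact identity for $\lbar{X}_{t|t}^{d,s}-X_t^{d,s}$ furnished by Proposition~\ref{prop:difference-X_t^i}, apply the triangle inequality together with submultiplicativity of the operator norm, and then control each resulting factor using one of the exponential-decay estimates already at hand. Recall $t\le s\le t+W$ throughout, so Lemma~\ref{lemma:bound-P_t-difference} applies to every index appearing below.

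First I would assemble the ingredients. From Proposition~\ref{prop:stability-LQR}, $\|\Phi(r+1,t+1)\|\le\tau\rho^{r-t}$, and from the MPC counterpart stated in Step~3 of the proof sketch, $\|\lbar{\Phi}_t(s+1,r+1)\|\le\tau\rho^{s-r}$. Next, I need the uniform bound $\lbar{P}_{s+1|t}\preceq P_{\max}$, hence $\|\lbar{P}_{s+1|t}\|\le\lambda_{\max}(P_{\max})$ and $\|\lbar{X}_{r|t}^{d,s}\|\le\tau\rho^{s-r}\lambda_{\max}(P_{\max})$. Lemma~\ref{lemma:bound-P_t-difference} supplies the crucial decaying factor $\|\lbar{P}_{r+1|t}-P_{r+1}\|\le\gamma^{t+W-r}\,\lambda_{\max}(P_{\max})^2/\lambda_{\min}(Q_{\min})$. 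Finally, since $R_r+B_u^\top\lbar{P}_{r+1|t}B_u\succeq R_{\min}\succ0$ we have $(R_r+B_u^\top\lbar{P}_{r+1|t}B_u)^{-1}\preceq R_{\min}^{-1}$, and conjugating by $B_u$ (which preserves the PSD order) gives $\|B_u(R_r+B_u^\top\lbar{P}_{r+1|t}B_u)^{-1}B_u^\top\|\le\|B_uR_{\min}^{-1}B_u^\top\|$.

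With these in hand, the bound is routine bookkeeping. The lead term of the identity is bounded by $\tau\rho^{s-t}\cdot\gamma^{t+W-s}\lambda_{\max}(P_{\max})^2/\lambda_{\min}(Q_{\min})$. Each summand of the sum over $r$ is bounded by $\tau\rho^{r-t}\cdot\gamma^{t+W-r}\tfrac{\lambda_{\max}(P_{\max})^2}{\lambda_{\min}(Q_{\min})}\cdot\|B_uR_{\min}^{-1}B_u^\top\|\cdot\tau\rho^{s-r}\lambda_{\max}(P_{\max})$; the two $\rho$-powers telescope to $\rho^{s-t}$ independently of $r$, and $\sum_{r=t+1}^{s}\gamma^{t+W-r}\le\gamma^{t+W-s}/(1-\gamma)$. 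Adding the two contributions, substituting $\tau^2=\lambda_{\max}(P_{\max})/\lambda_{\min}(Q_{\min})$, and using $\lambda_{\max}(P_{\max})\ge\lambda_{\max}(Q_{\max})\ge\lambda_{\min}(Q_{\min})$ (so that the stray half-powers fold into $\lambda_{\max}(P_{\max})^4/\lambda_{\min}(Q_{\min})^2$) yields exactly the claimed estimate, with the factor $(\|B_uR_{\min}^{-1}B_u^\top\|+1)/(1-\gamma)$ out front.

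The only ingredient not already explicit in the excerpt — and therefore the main point to pin down — is the uniform bound $\lbar{P}_{s+1|t}\preceq P_{\max}$. This follows from monotonicity of the Riccati operator $F_{Q,R}(\cdot)$ in all three of its arguments: $F_{Q,R}$ is nondecreasing in $P$, in $Q$, and in $R$ (increasing $R$ only shrinks the subtracted term $A^\top PB_u(R+B_u^\top PB_u)^{-1}B_u^\top PA$). Starting from the terminal value $\lbar{P}_{t+W+1|t}=P_{\max}$ and using $Q_i\preceq Q_{\max}$, $R_i\preceq R_{\max}$, a backward induction gives $\lbar{P}_{r|t}\preceq F_{Q_{\max},R_{\max}}(P_{\max})=P_{\max}$ for every $r$. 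Everything after that is exponent chasing and constant consolidation, so this monotonicity step is the only real obstacle.
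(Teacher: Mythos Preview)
Your proposal is correct and follows essentially the same route as the paper: start from the identity of Proposition~\ref{prop:difference-X_t^i}, bound $\|\Phi\|$, $\|\lbar{X}_{r|t}^{d,s}\|$, $\|\lbar{P}_{r+1|t}-P_{r+1}\|$, and $\|B_u(R_r+B_u^\top\lbar{P}_{r+1|t}B_u)^{-1}B_u^\top\|$ exactly as you list, telescope the $\rho$-powers, sum the $\gamma$-series, and consolidate using $\tau^2=\lambda_{\max}(P_{\max})/\lambda_{\min}(Q_{\min})$. The only point you expand beyond the paper is the justification of $\lbar{P}_{r|t}\preceq P_{\max}$ via monotonicity of $F_{Q,R}$; the paper covers this implicitly through its Proposition~\ref{prop:bounded-value-function} (whose proof is the same induction you describe, applied to the MPC terminal data).
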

\begin{proof}
From Proposition \ref{prop:difference-X_t^i}, we have that:
\begin{align*}
    &\|\lbar{X}_{t|t}^{d,s}-X_t^{d,s}\| \le \|\Phi(s+1,t+1)^\top\| \|(\lbar{P}_{s+1|t} - P_{s+1})\|\\
    &+ \sum_{r=t+1}^{s} \|\Phi(r+1,t+1)^\top\|\|(\lbar{P}_{r+1|t} - P_{r+1})^\top\|\\&\qquad\qquad\qquad\quad  \|B_u (R_{r} + B_u^\top\lbar{P}_{r+1|t}B_u)^{-1}B_u^\top\| \|\bar{X}_{r|t}^{s}\|
\end{align*}
It is easy to show from definition that
$$\|\lbar{X}_{r|t}^{d,s}\| \le \lambda_{\max}(P_{\max})\cdot\tau\rho^{s-r}.$$
Thus, by applying Proposition \ref{prop:exponential-convergence-value-function}, we have that
\begin{align*}
    &\|\lbar{X}_{t|t}^{d,s}-X_t^{d,s}\| \le\tau\rho^{s-t}\gamma^{W-s+t}\frac{\lambda_{\max}(P_{\max})^2}{\lambda_{\min}(P_{\min})}
    \\
    &+\sum_{r=t+1}^{s}(\tau\rho^{r-t})\left(\gamma^{W-r+t}\frac{\lambda_{\max}(P_{\max})^2}{\lambda_{\min}(P_{\min})}\right)\\ &\qquad\qquad\qquad\|B_uR_{\min}^{-1}B_u^\top\|~\lambda_{\max}(P_{\max})\cdot\tau\rho^{s-r}\\
     &= \tau^2\frac{\lambda_{\max}(P_{\max})^3}{\lambda_{\min}(P_{\min})}~\|B_uR_{\min}^{-1}B_u^\top\|~\rho^{s-t}\sum_{r=t+1}^{s}\gamma^{W-r+t}\\
     &+ \tau\rho^{i-1}\gamma^{W+1-i}\frac{\lambda_{\max}(P_{\max})^2}{\lambda_{\min}(P_{\min})}\\
     &\le \frac{\tau^2}{1-\gamma}\frac{\lambda_{\max}(P_{\max})^3}{\lambda_{\min}(P_{\min})}\left(\|B_uR_{\min}^{-1}B_u^\top\|+1\right)\rho^{s-t}\gamma^{W-s+t}\\
     & = \frac{\lambda_{\max}(P_{\max})^4}{(1-\gamma)\lambda_{\min}(P_{\min})^2}\left(\|B_uR_{\min}^{-1}B_u^\top\|+1\right)\rho^{s-t}\gamma^{W-s+t},
\end{align*}
which proves the Proposition \ref{prop:difference-K_t^i}.
\end{proof}

We are now ready to prove Lemma \ref{lemma:bound-K_t^i-difference}
\begin{proof}{Lemma \ref{lemma:bound-K_t^i-difference}}
\begin{align*}
 &   K_t^{d,s} = (R_t + B_u^\top P_{t+1}B_u)^{-1}B_u^\top X_t^{d,s}\\
&     \lbar{K}_i^{d,s} = (R_t + B_u^\top \lbar{P}_{t+1|t}B_u)^{-1}B_u^\top \lbar{X}_{t|t}^{d,s}\\
&\Rightarrow~\lbar{K}_i^{d,s}-K_t^{d,s} \\
&=  (R_t + B_u^\top \lbar{P}_{t+1|t}B_u)^{-1}B_u^\top(\lbar{X}_{t|t}^{d,s} - X_t^{d,s})\\& + \left((R_t + B_u^\top \lbar{P}_{t+1|t}B_u)^{-1}-(R_t + B_u^\top P_{t+1}B_u)^{-1}\right)B_u^\top X_t^{d,s}\\
&=  (R_t + B_u^\top \lbar{P}_{t+1|t}B_u)^{-1}B_u^\top(\lbar{X}_{t|t}^{d,s} - X_t^{d,s})\\& - (R_t + B_u^\top \lbar{P}_{t+1|t}B_u)^{-1}B_u^\top(\lbar{P}_{t+1|t}-P_{t+1})\\&\qquad\qquad\qquad\qquad\qquad B_u(R_t + B_u^\top P_{t+1}B_u)^{-1}B_u^\top X_t^{d,s}\\
&\Rightarrow~\|\lbar{K}_i^{d,s}-K_t^{d,s}\| \\
&\le \frac{\|B_u\|}{\lambda_{\min}(R_{\min})}\|\lbar{X}_{t|t}^{d,s}-X_t^{d,s}\|\\& + \frac{\|B_u\|}{\lambda_{\min}(R_{\min})}\|\lbar{P}_{t+1|t}-P_{t+1}\|\|B_u^\top R_{\min}^{-1}B_u\|\|X_t^{d,s}\|\\
& \le \frac{2\|B_u\|\lambda_{\max}(P_{\max})^4\left(\|B_uR_{\min}^{-1}B_u^\top\|+1\right)}{(1-\gamma)\lambda_{\min}(R_{\min})\lambda_{\min}(P_{\min})^2}\rho^{s-t}\gamma^{W-s+t}\\
& = c_3\rho^{s-t}\gamma^{W-s+t}
\end{align*}
Additionally, it is not hard to verify that:
\begin{equation*}
    \lbar{K}_t - K_t = (\lbar{K}_t^{d,t}-K_t^{d,t})A.
\end{equation*}
Thus:
\begin{equation*}
    \|\lbar{K}_t - K_t\|\le c_3\|A\|\gamma^W = c_2 \gamma^W,
\end{equation*}
which completes the proof
\end{proof}

\subsection{Proof of Lemma \ref{lemma:bound-P_t-difference}}\label{apdx:bound-P_t}
The following proposition suggest that under Assumption \ref{assump:bounded-S-R}, the value function matrices $P_t$'s are bounded.
\begin{prop}\label{prop:bounded-value-function}
Suppose $\{Q_t, R_t\}_{t=1}^T$ satisfies Assumption \ref{assump:bounded-S-R}. Then its corresponding LQR value function matrices $P_t$ are bounded, i.e.
$$\exists P_{\max} \succ 0, \quad s.t. \quad Q_{\min} \preceq P_t \preceq P_{\max}$$
\end{prop}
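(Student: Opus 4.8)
\medskip
\noindent\textbf{Proof proposal.}
The plan is to prove the two inequalities $Q_{\min}\preceq P_t$ and $P_t\preceq P_{\max}$ separately, both resting on elementary monotonicity properties of the Riccati operator $F_{Q,R}(\cdot)$. For the lower bound I would exploit the optimal cost-to-go interpretation of $P_t$; for the upper bound I would compare the time-varying Riccati recursion \eqref{eq:iterative-Riccati} with the constant-data recursion driven by $(Q_{\max},R_{\max})$ and show the latter stays below the DARE solution $P_{\max}$ of \eqref{eq:P_max}.

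First, the lower bound. For any $x$, $x^\top P_t x$ equals the minimum over admissible control sequences of $\sum_{i=t}^{T-1}(x_i^\top Q_i x_i+u_i^\top R_i u_i)+x_T^\top Q_T x_T$ with $x_t=x$ and disturbance-free dynamics (the formula recalled right after \eqref{eq:iterative-Riccati}). Every summand is nonnegative under Assumption~\ref{assump:bounded-S-R}, and the first summand is $x^\top Q_t x\ge x^\top Q_{\min} x$, so $x^\top P_t x\ge x^\top Q_{\min} x$ for all $x$, i.e. $P_t\succeq Q_{\min}\succ 0$. (Equivalently, one can argue directly from the recursion: for $P\succeq 0$ and $R\succ 0$, the Woodbury identity gives $F_{Q,R}(P)-Q=A^\top P^{1/2}\bigl(I+P^{1/2}B_uR^{-1}B_u^\top P^{1/2}\bigr)^{-1}P^{1/2}A\succeq 0$, hence $P_t=F_{Q_t,R_t}(P_{t+1})\succeq Q_t\succeq Q_{\min}$.)

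Next, the upper bound. I would first record the monotonicity of $F$: if $0\preceq Q\preceq Q'$, $0\prec R\preceq R'$, and $0\preceq P\preceq P'$, then $F_{Q,R}(P)\preceq F_{Q',R'}(P')$. This is immediate from the variational identity $x^\top F_{Q,R}(P)x=\min_u\{x^\top Q x+u^\top R u+(Ax+B_uu)^\top P(Ax+B_uu)\}$, since enlarging $Q$, $R$, or $P$ enlarges the minimand pointwise. Now $P_{\max}$, the stabilizing positive-definite solution of the DARE \eqref{eq:P_max}, exists because $(A,B_u)$ is stabilizable (Assumption~\ref{assump:stabilizability}) and $(A,Q_{\max}^{1/2})$ is observable ($Q_{\max}\succ 0$); by the same Woodbury computation as above, $P_{\max}=F_{Q_{\max},R_{\max}}(P_{\max})\succeq Q_{\max}$. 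Hence $P_T=Q_T\preceq Q_{\max}\preceq P_{\max}$, and inductively, if $P_{t+1}\preceq P_{\max}$ then using $Q_t\preceq Q_{\max}$, $R_t\preceq R_{\max}$, $P_{t+1}\preceq P_{\max}$ and the monotonicity of $F$ one gets $P_t=F_{Q_t,R_t}(P_{t+1})\preceq F_{Q_{\max},R_{\max}}(P_{\max})=P_{\max}$. This closes the induction, yielding $Q_{\min}\preceq P_t\preceq P_{\max}$ for every $t$.

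The only non-routine ingredients are the monotonicity of $F_{Q,R}(\cdot)$ in $P$ and the positivity of $F_{Q,R}(P)-Q$; both are classical and the variational/Woodbury arguments sketched above dispatch them in a couple of lines. The existence and positivity of $P_{\max}$ is standard discrete-time DARE theory under Assumptions~\ref{assump:stabilizability}--\ref{assump:bounded-S-R}; should one wish to avoid invoking it, an alternative is to define $P_{\max}$ as the (monotone) limit of the value iteration with constant data $(Q_{\max},R_{\max})$ started from $0$, which also supplies $P_{\max}\succeq Q_{\max}$. Everything else is the two-line induction above, so I do not anticipate a genuine obstacle here.
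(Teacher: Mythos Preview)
Your proposal is correct and follows essentially the same route as the paper: the lower bound comes from the optimal cost-to-go interpretation (the paper simply calls it ``obvious''), and the upper bound comes from monotonicity of the Riccati operator in the cost data. The only cosmetic difference is that the paper compares against the finite-horizon constant-data solution $P_1(\{Q_{\max},R_{\max}\}_{t=1}^T)$, whereas you induct directly against the DARE fixed point $P_{\max}$ of \eqref{eq:P_max}; your version is actually more consistent with the paper's own definition of $P_{\max}$.
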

\begin{proof}
It is obvious that $$P_t \succeq Q_{\min}, \quad, \forall t.$$

Furthermore,  by the definition of value function, we know that
\begin{align*}
    P_t(\{Q_i,R_i\}_{i=t}^T) \preceq P_t(\{Q_i^{\max},R_i^{\max}\}_{i=t}^T) \\
    \preceq P_1(\{Q_t^{\max}, R_t^{\max}\}_{t=1}^T).
\end{align*}
Thus let $P_{\max}:=P_1(\{Q_{\max}, R_{\max}\}_{t=1}^T)$
then $P_t \preceq P_{\max}$, which completes the proof.
\end{proof}
A key component in this subsection is the following invariant metric $\delta_{\infty}$ on positive definite matrices:
$$\delta_{\infty}(A,B):= \|\log(A^{-1/2}BA^{-1/2})\|.$$
Various properties of $\delta_{\infty}$ are given in Appendix \ref{apdx:Properties-of-Invariant-metric} of this paper and Appendix D in \cite{Krauth19}.

By Lemma \ref{lemma:invariant-metric-1} in Appendix \ref{apdx:Properties-of-Invariant-metric} and Proposition \ref{prop:bounded-value-function}, we can easily obtain the following corollary.

\begin{coro}
Given two sequences $\{Q_t, R_t\}_{t=1}^T, \{Q_t^\prime, R_t^\prime\}_{t=1}^T$, which both satisfy Assumption \ref{assump:bounded-S-R}. Then the distance between their corresponding LQR value function matrices $P_t, P_t^\prime$ are bounded, i.e.
$$ \delta_{\infty}(P_t^\prime, P_t),  \delta_{\infty}(P_t, P_t^\prime) \le \log\left(\frac{\lambda_{\max}(P_{\max})}{\lambda_{\min}(Q_{\min})}\right). $$
\end{coro}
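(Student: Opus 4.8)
The plan is to deduce the corollary directly from Proposition~\ref{prop:bounded-value-function} together with Lemma~\ref{lemma:invariant-metric-1}. First, since both $\{Q_t,R_t\}_{t=1}^T$ and $\{Q_t^\prime,R_t^\prime\}_{t=1}^T$ satisfy Assumption~\ref{assump:bounded-S-R}, Proposition~\ref{prop:bounded-value-function} applies to each sequence and produces the \emph{same} $P_{\max}=P_1(\{Q_{\max},R_{\max}\}_{t=1}^T)$, giving $Q_{\min}\preceq P_t\preceq P_{\max}$ and $Q_{\min}\preceq P_t^\prime\preceq P_{\max}$ for every $t$. Passing to extreme eigenvalues, this yields the scalar sandwich $\lambda_{\min}(Q_{\min})\,I\preceq P_t,\,P_t^\prime\preceq\lambda_{\max}(P_{\max})\,I$.

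Next I would invoke Lemma~\ref{lemma:invariant-metric-1}: whenever two positive definite matrices both lie in the Loewner interval $[\mu I,\,L I]$, their $\delta_{\infty}$-distance is at most $\log(L/\mu)$. Applying it with $\mu=\lambda_{\min}(Q_{\min})$, $L=\lambda_{\max}(P_{\max})$ to the pair $(P_t,P_t^\prime)$ gives exactly $\delta_{\infty}(P_t^\prime,P_t)\le\log(\lambda_{\max}(P_{\max})/\lambda_{\min}(Q_{\min}))$, and likewise $\delta_{\infty}(P_t,P_t^\prime)$. If one prefers a self-contained argument, set $M:=P_t^{\prime-1/2}P_tP_t^{\prime-1/2}$; from $P_t\preceq\lambda_{\max}(P_{\max})I$ and $P_t^{\prime-1}\preceq\lambda_{\min}(Q_{\min})^{-1}I$ one gets $M\preceq\frac{\lambda_{\max}(P_{\max})}{\lambda_{\min}(Q_{\min})}I$, and symmetrically from $P_t\succeq\lambda_{\min}(Q_{\min})I$ and $P_t^{\prime-1}\succeq\lambda_{\max}(P_{\max})^{-1}I$ one gets $M\succeq\frac{\lambda_{\min}(Q_{\min})}{\lambda_{\max}(P_{\max})}I$; hence every eigenvalue of $\log M$ lies in $[-\log\tfrac{\lambda_{\max}(P_{\max})}{\lambda_{\min}(Q_{\min})},\,\log\tfrac{\lambda_{\max}(P_{\max})}{\lambda_{\min}(Q_{\min})}]$, so $\delta_{\infty}(P_t^\prime,P_t)=\|\log M\|\le\log\tfrac{\lambda_{\max}(P_{\max})}{\lambda_{\min}(Q_{\min})}$. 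The bound on $\delta_{\infty}(P_t,P_t^\prime)$ is identical, using also that $\delta_{\infty}$ is symmetric because the eigenvalues of $P_t^{-1/2}P_t^\prime P_t^{-1/2}$ are the reciprocals of those of $M$.

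The statement is essentially immediate once these two ingredients are combined, so there is no genuine obstacle. The only place requiring a little care is the conversion of the Loewner sandwich $\mu I\preceq P_t,P_t^\prime\preceq L I$ into two-sided eigenvalue control on the congruence $P_t^{\prime-1/2}P_tP_t^{\prime-1/2}$, which relies on the monotonicity of the map $X\mapsto A^{-1/2}XA^{-1/2}$ and of the matrix logarithm (equivalently, on Lemma~\ref{lemma:invariant-metric-1} stated in precisely this form).
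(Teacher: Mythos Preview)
Your proposal is correct and matches the paper's own argument exactly: the corollary is stated immediately after Proposition~\ref{prop:bounded-value-function} with the one-line justification ``By Lemma~\ref{lemma:invariant-metric-1} and Proposition~\ref{prop:bounded-value-function}, we can easily obtain the following corollary.'' Your additional self-contained computation with $M=P_t^{\prime-1/2}P_tP_t^{\prime-1/2}$ is simply a reproof of Lemma~\ref{lemma:invariant-metric-1} specialized to this situation, so there is no substantive difference in approach.
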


Most importantly, by directly applying Lemma D.2 in \cite{Krauth19} and Lemma \ref{lemma:invariant-metric-2} in Appendix \ref{apdx:Properties-of-Invariant-metric}, we can immediately get the following proposition.

\begin{prop}\label{prop:exponential-convergence-value-function}
Given two sequences $\{Q_i, R_i\}_{i=t}^T, \{\bar{Q}_i, \bar{R}_i\}_{i=t}^T$ that satisfy Assumption \ref{assump:bounded-S-R}, and that:
\begin{equation*}
\begin{split}
  \bar{Q}_i& =   Q_i\\
  \bar{R}_i&= R_i, \quad i = t, t+1, \dots, t+W.
\end{split}
\end{equation*}
Then corresponding $P_i, \bar{P}_i$ satisfies:
\begin{equation*}
    \delta_{\infty}(\bar{P}_i, P_i) \le \gamma^{t+W+1-i} \log\left(\frac{\lambda_{\max}(P_{\max})}{\lambda_{\min}(Q_{\min})}\right),~ t\le i\le t+W,
\end{equation*}
where $\gamma = \frac{\lambda_{\max}(A^\top P_{\max} A)}{\lambda_{\min}(Q_{\min}) + \lambda_{\max}(A^\top P_{\max} A)}$.
Furthermore, by applying Lemma \ref{lemma:invariant-metric-2} in Appendix \ref{apdx:Properties-of-Invariant-metric}, we have:
$$\|\bar{P}_i - P_i\| \le \gamma^{t+W+1-i}~\frac{\lambda_{\max}(P_{\max})^2}{\lambda_{\min}(Q_{\min})}, \quad  t\le i\le t+W.$$
\end{prop}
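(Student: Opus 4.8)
The plan is to read the two matrix sequences backwards in time and exploit the fact that $F_{Q,R}(\cdot)$ contracts the invariant metric $\delta_\infty$: over the shared window $\{t,\dots,t+W\}$ the two Riccati recursions are driven by identical data, so their $\delta_\infty$-distance shrinks geometrically, and it only remains to control that distance at the far endpoint $i=t+W+1$ by a crude uniform bound.

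First I would observe that, because the cost sequences coincide for $i=t,\dots,t+W$, on this range both matrices are produced by the \emph{same} map, $P_i=F_{Q_i,R_i}(P_{i+1})$ and $\bar P_i=F_{Q_i,R_i}(\bar P_{i+1})$. By Proposition~\ref{prop:bounded-value-function} every iterate satisfies $Q_{\min}\preceq P_i,\bar P_i\preceq P_{\max}$, so all iterates stay inside the region on which the Riccati-map contraction estimate of \cite{Krauth19} (Lemma~D.2 there, i.e.\ \eqref{eq:ricatti-contraction}) is valid; plugging the uniform bounds into that estimate gives the contraction factor in exactly the stated form, $\gamma=\lambda_{\max}(A^\top P_{\max}A)/(\lambda_{\min}(Q_{\min})+\lambda_{\max}(A^\top P_{\max}A))<1$. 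Iterating \eqref{eq:ricatti-contraction} from index $i$ up to $t+W$ then yields $\delta_\infty(\bar P_i,P_i)\le\gamma^{\,t+W+1-i}\,\delta_\infty(\bar P_{t+W+1},P_{t+W+1})$.

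It then remains to bound the base term. Since $\bar P_{t+W+1}$ and $P_{t+W+1}$ are themselves LQR cost-to-go matrices for sequences obeying Assumption~\ref{assump:bounded-S-R}, Proposition~\ref{prop:bounded-value-function} again puts both between $Q_{\min}$ and $P_{\max}$, and Lemma~\ref{lemma:invariant-metric-1} gives $\delta_\infty(\bar P_{t+W+1},P_{t+W+1})\le\log(\lambda_{\max}(P_{\max})/\lambda_{\min}(Q_{\min}))$ — which is precisely the Corollary stated just above. Combining with the previous display proves the first inequality. For the operator-norm version I would apply Lemma~\ref{lemma:invariant-metric-2} of Appendix~\ref{apdx:Properties-of-Invariant-metric}, which bounds $\|\bar P_i-P_i\|$ by $\delta_\infty(\bar P_i,P_i)$ times a factor controlled by $\|P_i\|,\|\bar P_i\|\le\lambda_{\max}(P_{\max})$, and then clear the logarithm with the elementary bound $\log x\le x$, which reproduces exactly $\gamma^{\,t+W+1-i}\,\lambda_{\max}(P_{\max})^2/\lambda_{\min}(Q_{\min})$.

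The main obstacle is the specialization of the generic contraction rate of \cite{Krauth19} to the exact closed form $\gamma$ claimed here: one has to check that, under the sandwiching $Q_{\min}\preceq P_i,\bar P_i\preceq P_{\max}$, the ``variable part'' of the Riccati update is bounded in norm by $\lambda_{\max}(A^\top P_{\max}A)$ while the ``fixed part'' is bounded below by $\lambda_{\min}(Q_{\min})$, so that the relative perturbation measured by $\delta_\infty$ contracts at rate $\gamma$, and that the iterates never leave this admissible region (which is exactly what Proposition~\ref{prop:bounded-value-function} guarantees). The remaining step — tracking the constants through the $\delta_\infty\to\|\cdot\|$ conversion so that no stray $\log$ factor survives — is lighter bookkeeping, handled by Lemma~\ref{lemma:invariant-metric-2}.
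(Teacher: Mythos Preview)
Your proposal is correct and follows essentially the same route as the paper: iterate the Riccati contraction \eqref{eq:ricatti-contraction} over the shared window, bound the endpoint $\delta_\infty(\bar P_{t+W+1},P_{t+W+1})$ via Lemma~\ref{lemma:invariant-metric-1}, and convert to operator norm using Lemma~\ref{lemma:invariant-metric-2}. The only cosmetic difference is the last bookkeeping step: in the paper the logarithm does not need $\log x\le x$ but simply cancels against the $1/c$ in Lemma~\ref{lemma:invariant-metric-2}, after which $e^c-1\le e^c=\lambda_{\max}(P_{\max})/\lambda_{\min}(Q_{\min})$ gives the stated constant.
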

\begin{proof}
Recall that
\begin{align*}
    P_{i} &= F_{Q_{i}, R_{i}}(P_{i+1}), \\
    \bar{P}_{i} &= F_{Q_{i},R_{i}}(\bar{P}_{i+1}),\quad t\le i\le t+W
\end{align*}
Define $\alpha := \max\{\lambda_{\max} (A^\top P_{i+1}A),\lambda_{\max} (A^\top \bar{P}_{i+1}A)\}$, Lemma D.2 in \cite{Krauth19} implies:
\begin{align*}
    \delta_{\infty}(\bar{P}_i,P_i) &\le \frac{\alpha}{\lambda_{\min} (Q_{\min}) + \alpha} \delta_{\infty}(\bar{P}_{i+1},P_{i+1})\\
    &\le \frac{\lambda_{\max}(A^\top P_{\max} A)}{\lambda_{\min}(Q_{\min}) + \lambda_{\max}(A^\top P_{\max} A)}\delta_{\infty}(\bar{P}_{i+1},P_{i+1})\\
    &= \gamma \delta_{\infty}(\bar{P}_{i+1},P_{i+1}), \quad t\le i\le t+W
\end{align*}
Applying this inequality recursively, we have:
\begin{align*}
    \delta_{\infty}(\bar{P}_i,P_i) 
    &\le \gamma \delta_{\infty}(\bar{P}_{i+1},P_{i+1})\\
    &\le \cdots\\
    &\le \gamma^{t+W+1-i} \delta_{\infty}(\bar{P}_{t+W+1}, P_{t+W+1})\\
    &\le \gamma^{t+W+1-i} \log\left(\frac{\lambda_{\max}(P_{\max})}{\lambda_{\min}(Q_{\min})}\right)
\end{align*}
Then by applying Lemma \ref{lemma:invariant-metric-2} in Appendix \ref{apdx:Properties-of-Invariant-metric}, we have:
$$\|\bar{P}_i - P_i\| \le \gamma^{t+W+1-i}~\frac{\lambda_{\max}(P_{\max})^2}{\lambda_{\min}(Q_{\min})}, \quad  t\le i\le t+W,$$
which completes the proof.
\end{proof}

\begin{proof}{(Lemma \ref{lemma:bound-P_t-difference})}

Lemma \ref{lemma:bound-P_t-difference} is simply a corollary of Proposition \ref{prop:exponential-convergence-value-function}
\end{proof}

\subsection{Exponential Stability}\label{apdx:exponential-stability}
In this section we will look into the exponential stability of both finite time LQR (Proposition \ref{prop:stability-LQR}) and MPC algorithm (Proposition \ref{prop:stability-MPC}).

\begin{proof}{(Proposition \ref{prop:stability-LQR})}

Let $V_t^{\textup{LQR}}(x):= x^\top P_t x$ be the value function for standard finite time horizon LQR problem. For arbitrary $t, i, x$, let
\begin{equation*}
\begin{split}
    x_t&:=x\\
    x_{t+j}&:=(A-B_uK_{t+j})\cdots(A-B_uK_{t+2})(A-B_uK_{t+1})x
\end{split}
\end{equation*}

According to Bellman optimality equation, we have that
\begin{align*}
    V_{t+1}^{\textup{LQR}}(x_{t+1}) &= V_t^{\textup{LQR}}(x_t) - x_t^\top Q_tx_t - x_t^\top K_t^\top R_tK_tx_t\\
    &\le V_t^{\textup{LQR}}(x_t) - x_t^\top Q_tx_t\\
    &=x_t^\top P_tx_t - x_t^\top Q_tx_t\\
    &\le \left(1-\frac{\lambda_{\min}(Q_t)}{\lambda_{\max}(P_t)}\right)x_t^\top P_tx_t\\
    &\le  \left(1-\frac{\lambda_{\min}(Q_{\min})}{\lambda_{\max}(P_{\max})}\right)V_t^{\textup{LQR}}(x_t)
\end{align*}
Similarly,
\begin{align*}
   V_{t\!+\!2}^{\textup{LQR}}(x_{t\!+\!2}) &\le \left(\!1-\frac{\lambda_{\min}(Q_{\min})}{\lambda_{\max}(P_{\max})}\!\right) V_{t\!+\!1}^{\textup{LQR}}(x_{t\!+\!1})\\
    &\cdots\\
    \Rightarrow \quad V_{t+i}^{\textup{LQR}}(x_{t+i}) &\le \left(1-\frac{\lambda_{\min}(Q_{\min})}{\lambda_{\max}(P_{\max})}\right)^i V_t^{\textup{LQR}}(x_{t})
\end{align*}
Since we have:
\begin{equation*}
\begin{split}
    &x_{t+i}^\top P_{\min}x_{t+i} \le V_{t+i}^{\textup{LQR}}(x_{t+i})\\ 
    &\le \left(1-\frac{\lambda_{\min}(Q_{\min})}{\lambda_{\max}(P_{\max})}\right)^i V_t^{\textup{LQR}}(x_{t})\\
    &\le \left(1-\frac{\lambda_{\min}(Q_{\min})}{\lambda_{\max}(P_{\max})}\right)^i x_t^\top P_{\max}x_t
\end{split}
\end{equation*}
$$\Rightarrow \|x_{t+i}\| \le \frac{\lambda_{\max}(P_{\max})}{\lambda_{\min}(P_{\min})} \left(1-\frac{\lambda_{\min}(Q_{\min})}{\lambda_{\max}(P_{\max})}\right)^i\|x_t\|$$
where $x_t = x,~x_{t+i} = (A-B_uK_{t+i})\cdots(A-B_uK_{t+2})(A-B_uK_{t+1})$. The inequality holds for arbitrary $x$, thus we have:
\begin{align*}
    \|(A-B_uK_{t+i})\cdots(A-B_uK_{t+2})(A-B_uK_{t+1})\| \\
    \le \sqrt{\frac{\lambda_{\max}(P_{\max})}{\lambda_{\min}(P_{\min})}} \left(\sqrt{
1-\frac{\lambda_{\min}(Q_{\min})}{\lambda_{\max}(P_{\max})}}\right)^i,
\end{align*}
which proves the proposition.
\end{proof}
The proof for exponential stability of MPC algorithm is similar to the proof provided above.
\begin{proof}{(Proposition \ref{prop:stability-MPC})}

First we define a time-varying Lyapunov function:
\begin{align*}
    &V_t^{\textup{MPC}}(x):= x^\top \lbar{P}_{t|t} x\\
    &= \min \sum_{s=t}^{t+W}(x_s^\top Q_s x_s + u_s^\top R_s u_s) + x_{t+W+1}^\top P_{\max} x_{t+W+1}\\
    &\quad s.t.\quad x_{s+1} = Ax_s + B_u u_s\\
    &\qquad\qquad~ x_t = x
\end{align*}
Applying Proposition \ref{prop:bounded-value-function} we can show that:
\begin{equation*}
    Q_{\min}\preceq \lbar{P}_{t|t} \preceq P_{\max}.
\end{equation*}
For arbitrary $t,j,x$, let:
\begin{align*}
    x_t&:= x\\
    x_{t+j}&:= (A-B_u\lbar{K}_{t+j})\cdots(A-B_u\lbar{K}_{t+1})x\\
    &= \lbar{\Phi}^{\textup{MPC}}(t+j+1, t+1)x
\end{align*}
By the definition of $\lbar{K_{t}}, P_{\max}$, we have that:
\begin{align*}
    V_t^{\textup{MPC}}(x_t) =\! \min_{x_{s\!+\!1}\!=\!Ax_s\!+\!B_uu_s}\!\left(\sum_{s\!=\!t}^{t\!+\!W}(x_s^\top Q_s x_s \!+\! u_s^\top R_s u_s)\right. \\\left.+ x_{t+W+1}^\top P_{\max} x_{t+W+1}\right)\\
    = \min_{x_{s+1}=Ax_s+B_uu_s}\left(\sum_{s=t}^{t+W}(x_s^\top Q_s x_s + u_s^\top R_s u_s)\right. \\\left.+ \sum_{s=t+W+1}^{+\infty}(x_s^\top Q_{\max} x_s + u_s^\top R_{\max} u_s) \right)\\
    = x_t^\top Q_tx_t + x_t^\top\lbar{K}_t^\top R_t \lbar{K}_tx_t\qquad\qquad\qquad\qquad\\
    +\min_{x_{s+1}=Ax_s+B_uu_s}\left(\sum_{s=t+1}^{t+W}(x_s^\top Q_s x_s + u_s^\top R_s u_s)\right. \\\left.+ \sum_{s=t+W+1}^{+\infty}(x_s^\top Q_{\max} x_s + u_s^\top R_{\max} u_s) \right)\\
    \ge x_t^\top Q_tx_t + x_t^\top\lbar{K}_t^\top R_t \lbar{K}_tx_t\qquad\qquad\qquad\qquad\\
    +\min_{x_{s+1}=Ax_s+B_uu_s}\left(\sum_{s=t+1}^{t+W+1}(x_s^\top Q_s x_s + u_s^\top R_s u_s)\right. \\\left.+ \sum_{s=t+W+2}^{+\infty}(x_s^\top Q_{\max} x_s + u_s^\top R_{\max} u_s) \right)\\
    = x_t^\top Q_tx_t + x_t^\top\lbar{K}_t^\top R_t \lbar{K}_tx_t + V_{t+1}^{\textup{MPC}}(x_{t+1})\quad
\end{align*}
Thus,
\begin{align*}
    V_{t+1}^{\textup{MPC}}(x_{t+1}) \le V_t^{\textup{MPC}}(x_t) - x_t^\top Q_tx_t\\
    \le \left(1-\frac{\lambda_{\min}(Q_{\min})}{\lambda_{\max}(P_{\max})}\right)V_t^{\textup{MPC}}(x_t)
\end{align*}
Similarly,
\begin{align*}
    V_{t+j}^{\textup{MPC}}(x_{t+j}) 
    \le \left(1-\frac{\lambda_{\min}(Q_{\min})}{\lambda_{\max}(P_{\max})}\right)^j V_t^{\textup{MPC}}(x_t)\\
    \Rightarrow ~ \|x_{t+j}\|^2 \le \frac{\lambda_{\max}(P_{\max})}{\lambda_{\min}(Q_{\min})}\left(1-\frac{\lambda_{\min}(Q_{\min})}{\lambda_{\max}(P_{\max})}\right)^j\|x_t\|^2,
\end{align*}
which completes the proof.
\end{proof}
\subsection{Properties of Invariant Metric}\label{apdx:Properties-of-Invariant-metric}
\begin{lemma}\label{lemma:invariant-metric-1}
Suppose $A,B$ are positive definite, and
$$L \preceq A,B\preceq U,$$
where $L,U$ are both positive definite matrices. Then:
\begin{equation*}
    \delta_{\infty}(A,B) \le \log\left(\frac{\lambda_{\max}(U)}{\lambda_{\min}(L)}\right)
\end{equation*}
\end{lemma}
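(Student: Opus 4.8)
The plan is to unwind the definition $\delta_{\infty}(A,B)=\|\log(A^{-1/2}BA^{-1/2})\|$ and reduce everything to a two-sided eigenvalue bound on the symmetric positive definite matrix $M:=A^{-1/2}BA^{-1/2}$. Since $M\succ 0$, $\log M$ is well defined and symmetric, with eigenvalues $\log\mu_1,\dots,\log\mu_n$, where $\mu_1,\dots,\mu_n>0$ are the eigenvalues of $M$; hence the operator ($L_2$) norm satisfies $\|\log M\|=\max_i|\log\mu_i|$. So it suffices to show $\lambda_{\min}(L)/\lambda_{\max}(U)\le\mu_i\le\lambda_{\max}(U)/\lambda_{\min}(L)$ for every $i$, together with the observation that the ratio $r:=\lambda_{\max}(U)/\lambda_{\min}(L)$ is at least $1$ --- which holds because $L\preceq A\preceq U$ forces $L\preceq U$, hence $\lambda_{\min}(L)\le\lambda_{\max}(U)$ --- so that $1/r\le\mu_i\le r$ gives $|\log\mu_i|\le\log r$.

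For the upper bound on $\lambda_{\max}(M)$: from $B\preceq U\preceq\lambda_{\max}(U)I$ and the fact that conjugation $X\mapsto A^{-1/2}XA^{-1/2}$ preserves the Loewner order, $M=A^{-1/2}BA^{-1/2}\preceq\lambda_{\max}(U)\,A^{-1}$; and from $A\succeq L\succeq\lambda_{\min}(L)I$ we get $A^{-1}\preceq\lambda_{\min}(L)^{-1}I$, so $M\preceq\big(\lambda_{\max}(U)/\lambda_{\min}(L)\big)I$, i.e.\ $\lambda_{\max}(M)\le r$. Symmetrically, $B\succeq L\succeq\lambda_{\min}(L)I$ gives $M\succeq\lambda_{\min}(L)\,A^{-1}$, and $A\preceq U\preceq\lambda_{\max}(U)I$ gives $A^{-1}\succeq\lambda_{\max}(U)^{-1}I$, whence $M\succeq\big(\lambda_{\min}(L)/\lambda_{\max}(U)\big)I$, i.e.\ $\lambda_{\min}(M)\ge 1/r$. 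Combining these with the reduction above yields $\delta_{\infty}(A,B)=\|\log M\|=\max_i|\log\mu_i|\le\log r=\log\big(\lambda_{\max}(U)/\lambda_{\min}(L)\big)$.

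There is no real obstacle here; the only points deserving a word of care are (i) that the $L_2$ operator norm of the symmetric matrix $\log M$ equals its spectral radius $\max_i|\log\mu_i|$, and (ii) that conjugation by the symmetric positive definite matrix $A^{-1/2}$ is order-preserving, which is immediate since $X\succeq 0\Rightarrow C^\top X C\succeq 0$ for any $C$. Everything else is monotonicity of scalar functions, and self-contained statements of these facts also appear in Appendix~D of \cite{Krauth19}.
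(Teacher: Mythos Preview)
Your proof is correct and follows essentially the same approach as the paper's own proof: both reduce $\delta_\infty(A,B)$ to $\max_i|\log\mu_i|$ for the eigenvalues $\mu_i$ of $M=A^{-1/2}BA^{-1/2}$, and then sandwich $M$ between $(1/r)I$ and $rI$ via the same Loewner-order manipulations (conjugate $B\preceq\lambda_{\max}(U)I$ and $B\succeq\lambda_{\min}(L)I$ by $A^{-1/2}$, then use $\lambda_{\min}(L)I\preceq A\preceq\lambda_{\max}(U)I$ to bound $A^{-1}$). If anything, your version is slightly cleaner in that you explicitly note $r\ge1$, which is needed to conclude $|\log\mu_i|\le\log r$ from $1/r\le\mu_i\le r$; the paper uses this implicitly.
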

\begin{proof}
\begin{equation*}
\begin{split}
    &\delta_{\infty}(A,B) = \|\log(A^{-1/2}BA^{-1/2})\|\\
    &= \max\left\{\left|\log(\lambda_{\max}\left(A^{-1/2}BA^{-1/2}\right))\right|,\right.\\&~\qquad\qquad\left.\left|\log(\lambda_{\min}\left(A^{-1/2}BA^{-1/2}\right))\right|\right\}
\end{split}
\end{equation*}
Since
\begin{align*}
    &A^{-1/2}BA^{-1/2} \preceq A^{-1/2}\left(\lambda_{\max}(U)I\right)A^{-1/2}\\& = \lambda_{\max}(U)A^{-1}
    \preceq \lambda_{\max}(U)L^{-1}\preceq\frac{\lambda_{\max}(U)}{\lambda_{\min}(L)} I
\end{align*}
\begin{align*}
    &A^{-1/2}BA^{-1/2} \succeq A^{-1/2}\left(\lambda_{\min}(L)I\right)A^{-1/2} \\&= \lambda_{\max}(L)A^{-1}
    \succeq \lambda_{\min}(L)U^{-1}\preceq\frac{\lambda_{\min}(L)}{\lambda_{\max}(U)} I
\end{align*}
Then we have that,
\begin{equation*}
\begin{split}
     -\log\left(\frac{\lambda_{\max}(U)}{\lambda_{\min}(L)}\right)\le \log\left(\lambda_{\min}\left(A^{-1/2}BA^{-1/2}\right)\right)\\\le\log\left(\lambda_{\min}\left(A^{-1/2}BA^{-1/2}\right)\right)\le \log\left(\frac{\lambda_{\max}(U)}{\lambda_{\min}(L)}\right)
\end{split}
\end{equation*}
which completes the proof.
\end{proof}

\begin{lemma}\label{lemma:invariant-metric-2}
Suppose $A,B$ are positive definite, and
$$A \preceq U, \quad \delta_{\infty}(A,B) \le c,$$
where $U$ is a positive definite matrix. Then,
$$\|A - B\| \le \lambda_{\max}(U) \frac{e^c-1}{c}\delta_{\infty}(A,B).$$
\end{lemma}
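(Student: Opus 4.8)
The plan is to peel off the factor $A$ and reduce the matrix estimate to a scalar inequality via simultaneous diagonalization. First I would set $M := A^{-1/2}BA^{-1/2}$, which is symmetric positive definite because $B$ is, so $\log M$ is well defined and, by definition, $\delta_{\infty}(A,B) = \|\log M\|$. Factoring gives $A - B = A^{1/2}(I - M)A^{1/2}$, hence
\[
\|A - B\| \;\le\; \|A^{1/2}\|^2\,\|I - M\| \;=\; \lambda_{\max}(A)\,\|I - M\| \;\le\; \lambda_{\max}(U)\,\|I - M\|,
\]
where the last step uses $A \preceq U$ (so $x^\top A x \le x^\top U x \le \lambda_{\max}(U)$ for all unit $x$). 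It therefore suffices to show $\|I - M\| \le \frac{e^{c}-1}{c}\,\delta_{\infty}(A,B)$; here I take $c > 0$, the case $c = 0$ forcing $A = B$ and being trivial with $\frac{e^c-1}{c}$ read as its limit $1$.

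Next I would diagonalize $M = \sum_i \mu_i v_i v_i^\top$ with $\mu_i > 0$ and write $\lambda_i := \log \mu_i$, so that $\delta_{\infty}(A,B) = \|\log M\| = \max_i |\lambda_i| =: d \le c$. Since $I - M$ is symmetric with eigenvalues $1 - \mu_i = 1 - e^{\lambda_i}$, we have $\|I - M\| = \max_i |e^{\lambda_i} - 1|$. Thus the lemma reduces to the scalar bound
\[
|e^{\lambda} - 1| \;\le\; \frac{e^{c}-1}{c}\,|\lambda| \qquad \text{whenever } |\lambda| \le c,
\]
since then $\max_i |e^{\lambda_i}-1| \le \frac{e^c-1}{c}\max_i|\lambda_i| = \frac{e^c-1}{c}\,d = \frac{e^c-1}{c}\,\delta_{\infty}(A,B)$.

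To prove the scalar bound I would invoke convexity of $t \mapsto e^t$. For $0 \le \lambda \le c$, the graph lies below the secant through $(0,1)$ and $(c,e^{c})$, giving $e^{\lambda} \le 1 + \frac{\lambda}{c}(e^{c}-1)$, i.e.\ $e^{\lambda} - 1 \le \frac{e^{c}-1}{c}\lambda$; as $e^{\lambda}-1 \ge 0$ this is exactly the claim. For $-c \le \lambda < 0$, the tangent at $0$ lies below the graph, so $e^{\lambda} \ge 1 + \lambda$; combining this with $\frac{e^{c}-1}{c} \ge 1$ (valid for $c>0$ because $e^{c} > 1+c$) and $\lambda < 0$ yields $e^{\lambda} \ge 1 + \lambda \ge 1 + \lambda\frac{e^{c}-1}{c}$, i.e.\ $1 - e^{\lambda} \le -\lambda\frac{e^{c}-1}{c} = \frac{e^{c}-1}{c}|\lambda|$. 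Stitching the two cases together and then chaining back through the displays above completes the proof.

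There is no deep obstacle here: everything is routine operator-norm bookkeeping plus a one-variable estimate. The only point that needs a little care is the scalar inequality, where the positive and negative branches of $\lambda$ require slightly different convexity arguments, together with the degenerate case $c = 0$; alternatively one can prove the monotonicity of $\lambda \mapsto (e^{\lambda}-1)/\lambda$ once and for all and read off both branches from it.
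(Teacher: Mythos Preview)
Your proof is correct and follows essentially the same route as the paper: factor $A-B = A^{1/2}(I-M)A^{1/2}$, bound $\|A^{1/2}\|^2 \le \lambda_{\max}(U)$, and reduce to a scalar estimate on $|e^{\lambda}-1|$ via the eigenvalues of $M$. The only cosmetic difference is that the paper first bounds $\|I-M\| \le \max\{e^{\delta}-1,\,1-e^{-\delta}\}$ with $\delta=\delta_\infty(A,B)$ and then applies the scalar inequality to $e^{\delta}-1$ alone, whereas you apply the two-branch scalar bound to each eigenvalue directly; your treatment of the negative branch and of the degenerate case $c=0$ is in fact a bit more careful than the paper's.
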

\begin{proof}
\begin{equation*}
\begin{split}
    \|A-B\| &\le \|A^{1/2}(I-A^{-1/2}BA^{-1/2})A^{1/2}\|\\
    &\le \|A^{1/2}\|^2 \|I-A^{-1/2}BA^{1/2}\|\\
    &\le\lambda_{\max}(U) \|I-A^{-1/2}BA^{1/2}\|
\end{split}
\end{equation*}
Since
\begin{equation*}
\begin{split}
    e^{\!-\!\delta_{\infty}(A,B)}I \!-\! I\!\preceq\!\lambda_{\min}(A^{\!-\!1/2}BA^{1/2})I\!-\!I\!\preceq \!A^{\!-\!1/2}BA^{1/2}\!-\!I\\\preceq\lambda_{\max}(A^{-1/2}BA^{1/2})I-I\preceq e^{\delta_{\infty}(A,B)}I - I
\end{split}
\end{equation*}
Thus
\begin{align*}
    \|I-A^{-1/2}BA^{1/2}\| &\le \max\left\{e^{\delta_{\infty}(A,B)}\!-\!1, 1\!-\! e^{-\delta_{\infty}(A,B)} \right\}\\
    &\le \max\left\{e^{\delta_{\infty}(A,B)}-1, \delta_{\infty}(A,B)\right\}.
\end{align*}
It is easy to verify that $e^x - 1 \le \frac{e^c-1}{c}x$ for $0\le x\le c$.
Thus, we have
$$e^{\delta_{\infty}(A,B)}-1 \le \frac{e^c-1}{c}\delta_{\infty}(A,B),$$
which completes the proof.
\end{proof}
\subsection{Others}\label{apdx:others}
\begin{lemma}\label{lemma:auxillary}
$y_1, \dots, y_n \in \mathbb{R}$, and
$$\alpha_t = a_1^t y_1 + \dots + a_n^ty_n, ~t = 1,2,\dots,T,$$
where $a_i^t \ge 0, i = 1,2,\dots, n$. Then:
\begin{align*}
    \sum_{t=1}^T\alpha_t^2 &\le\sum_{i}\left(\left\{\sum_{t=1}^T a_i^t\left(\sum_{j=1}^n a_j^t\right)\right\} y_i^2\right)\\
    &\le \max_{i}\left\{\sum_{t=1}^T a_i^t\left(\sum_{j=1}^n a_j^t\right)\right\} \left(\sum_{i=1}^n y_i^2\right)
\end{align*}
\end{lemma}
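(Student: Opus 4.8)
The plan is a direct, elementary argument: expand $\alpha_t^2$, apply the pointwise inequality $y_iy_j\le \tfrac12(y_i^2+y_j^2)$, then sum over $t$ and finally pull out the maximum. No machinery beyond Young's inequality is needed.

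First I would fix $t$ and write $b_i:=a_i^t\ge 0$, so that $\alpha_t=\sum_{i=1}^n b_iy_i$ and
\[
\alpha_t^2=\Big(\sum_{i=1}^n b_iy_i\Big)^2=\sum_{i=1}^n\sum_{j=1}^n b_ib_j\,y_iy_j
\le \sum_{i=1}^n\sum_{j=1}^n b_ib_j\,\frac{y_i^2+y_j^2}{2},
\]
where the inequality uses $b_ib_j\ge0$ together with $y_iy_j\le\tfrac12(y_i^2+y_j^2)$. Splitting the last sum into the $y_i^2$ part and the $y_j^2$ part, and relabeling the summation index in the second part, both halves are equal, giving
\[
\alpha_t^2\le\Big(\sum_{j=1}^n b_j\Big)\sum_{i=1}^n b_i\,y_i^2
=\sum_{i=1}^n a_i^t\Big(\sum_{j=1}^n a_j^t\Big)\,y_i^2.
\]

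Summing this bound over $t=1,\dots,T$ and interchanging the (finite) sums over $t$ and $i$ yields
\[
\sum_{t=1}^T\alpha_t^2\le\sum_{i=1}^n\Big(\sum_{t=1}^T a_i^t\Big(\sum_{j=1}^n a_j^t\Big)\Big)\,y_i^2,
\]
which is the first claimed inequality. For the second inequality, since $y_i^2\ge0$ and each coefficient $\sum_{t=1}^T a_i^t\big(\sum_{j=1}^n a_j^t\big)$ is nonnegative, I would simply bound every coefficient by its maximum over $i$ and factor it out of the sum, obtaining
\[
\sum_{i=1}^n\Big(\sum_{t=1}^T a_i^t\Big(\sum_{j=1}^n a_j^t\Big)\Big)\,y_i^2
\le \max_{i}\Big\{\sum_{t=1}^T a_i^t\Big(\sum_{j=1}^n a_j^t\Big)\Big\}\,\sum_{i=1}^n y_i^2.
\]
There is no real obstacle here: the only point requiring a moment's care is the symmetrization step (verifying that the $y_i^2$ and $y_j^2$ contributions coincide after relabeling), and the nonnegativity of all the $a_i^t$, which is exactly the hypothesis that makes both Young's inequality and the final factoring valid.
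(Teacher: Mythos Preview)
Your proof is correct and is essentially the same argument as the paper's: the paper packages the step $y_iy_j\le\tfrac12(y_i^2+y_j^2)$ in matrix form by noting that the matrix $A$ with entries $A_{ij}=\sum_{t}a_i^ta_j^t\ge 0$ satisfies that $\mathrm{diag}\{\sum_j A_{ij}\}_i-A$ is diagonally dominant and hence PSD, which gives $y^\top Ay\le\sum_i(\sum_j A_{ij})y_i^2$. Unpacking that diagonal-dominance/PSD statement entrywise is exactly your Young's-inequality computation, so the two proofs coincide up to presentation.
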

\begin{proof}
Define:
$$a_t:= [a_1^t, \dots, a_n^t]^\top, \quad y:= [y_1, \dots, y_n]^\top,$$ then
$$\sum_{t=1}^T\alpha_t^2 = y^\top\left(\sum_{t=1}^Ta_t^\top a_t\right)y.$$
Let
$$A:= \sum_{t=1}^Ta_t^\top a_t.$$
We have: $A_{ij} =\sum_{t=1}^T a_i^ta_j^t \ge 0$.
Thus $$diag\left\{\left\{\sum_j A_{ij}\right\}_i\right\} - A$$ is diagonally dominant, and thus,
\begin{align*}
    &diag\left\{\left\{\sum_j A_{ij}\right\}_i\right\} - A\succeq 0\\
    \Rightarrow &diag\left\{\left\{\sum_j A_{ij}\right\}_i\right\}\succeq A.
\end{align*}
Thus,
\begin{align*}
   \sum_{t=1}^T\alpha_t^2 &= y^\top Ay \\
   &\le  y^\top diag\left\{\left\{\sum_j A_{ij}\right\}_i\right\} y\\
   &= \sum_{i}\left(\left\{\sum_{t=1}^T a_i^t\left(\sum_{j=1}^n a_j^t\right)\right\} y_i^2\right)\\
    &\le \max_{i}\left\{\sum_{t=1}^T a_i^t\left(\sum_{j=1}^n a_j^t\right)\right\} \left(\sum_{i=1}^n y_i^2\right)   
\end{align*}
\end{proof}

\end{document}